\DeclarePairedDelimiter\ceil{\lceil}{\rceil}
\DeclarePairedDelimiter\floor{\lfloor}{\rfloor}        
\long\def\symbolfootnote[#1]#2{\begingroup%
	\def\thefootnote{\fnsymbol{footnote}}\footnote[#1]{#2}\endgroup}
\newcommand{\Z}{\mathbb{Z}}
\newcommand{\diag}{\textup{diag}}
\newcommand{\GL}{\textup{GL}}
\newcommand{\rg}{\textup{rg}}
\newcommand{\rs}{\textup{rs}}
\newcommand{\s}{\textup{ss}}
\def\imod#1{\allowbreak\mkern10mu({\operator@font mod}\,\,#1)}
\newtheorem{theorem}{Theorem}[section]
\newtheorem{lemma}[theorem]{Lemma}
\newtheorem{corollary}[theorem]{Corollary}
\newtheorem{proposition}[theorem]{Proposition}
\newtheorem*{theorem*}{Theorem}
\theoremstyle{definition}
\newtheorem{definition}[theorem]{Definition}
\newtheorem{example}[theorem]{Example}
\numberwithin{equation}{section}
\newcommand{\ignore}[1]{}
\newcommand{\mynote}[1]{}
\begin{document}
\setcounter{section}{0}
\title{Generating functions for the powers in $\text{GL}(n,q)$}
\author{Rijubrata Kundu}
\address{IISER Pune, Dr. Homi Bhabha Road, Pashan, Pune 411 008 India}
\email{rijubrata8@gmail.com}
\author{Anupam Singh}
\email{anupamk18@gmail.com}
\thanks{The first named author would like to acknowledge the support of NBHM PhD fellowship during 
this work. The second named author was supported by SERB core research grant CRG/2019/000271 during 
this work.}
\subjclass[2010]{20G40,05A15,20P05}
\today
\keywords{$\text{GL}(n, q)$, generating function, power map}


\begin{abstract}
Consider the set of all powers $\text{GL}(n ,q)^M = \{x^M \mid x\in \text{GL}(n, q)\}$ for an 
integer $M\geq 2$. In this article, we aim to enumerate the regular, regular semisimple and 
semisimple elements as well as conjugacy classes in the set $\text{GL}(n, q)^M$, i.e., the elements 
or classes of these kinds which are $M^{th}$ powers. We get the generating functions for (i) regular 
and regular semisimple elements (and classes) when $(q,M)=1$, (ii) for semisimple elements and all 
elements (and classes) when $M$ is a prime power and $(q,M)=1$, and (iii) for all kinds when $M$ is 
a prime and $q$ is a power of $M$. 
\end{abstract}

\maketitle

\section{Introduction}
One of the ways to approach problems in finite group theory is to study them statistically. This 
approach has been beautifully highlighted in the survey articles by some of the stalwarts of group 
theory, for example, see the articles by Shalev~\cite{sh1} and Dixon~\cite{di}. Here, we focus on 
the finite general linear group $\text{GL}(n, q)$ and aim to enumerate elements which are powers. 
The conjugacy classes of this group are given by the theory of Jordan and rational canonical forms. 
This theory, as well as the representation theory of this group (see~\cite{Gr}), uses partitions as 
a tool. Thus, representation-theoretic questions about this group naturally lead to several 
combinatorial questions. Fix an integer $M\geq 2$, and look at the power map $\omega_M \colon 
\text{GL}(n,q) \rightarrow \text{GL}(n, q)$ given by $x\mapsto x^M$. The central question here is to 
enumerate the image size $|\text{GL}(n,q)^M|$ and how many regular, regular semisimple, and 
semisimple elements it contains. That is, decide when a regular, regular semisimple or semisimple 
element is $M^{th}$ power in $\text{GL}(n, q)$. We would like to get the generating functions for 
the same. Since, if an element is in the image of $\omega$, all of its conjugates are so. We ask the 
enumeration questions about conjugacy classes as well. We always assume that $\omega_M$ is 
non-trivial, that is, $M$ is co-prime to the order of the group. We set the notation and explain 
the problems more precisely in Section~\ref{notation}. Let us briefly look at the results which 
motivated us to take up this project.

\subsection{Motivation}
The importance of looking at the power maps lies in one of the most active research areas 
of group theory at present, namely, the Waring-like problems for finite simple groups and, more 
generally, word maps (non-trivial) on groups. We refer an interested reader to the survey articles 
by Shalev~\cite{sh} and Larsen~\cite{La} on this subject and references therein. One of the key 
results due to Larsen,  Shalev and Tiep (Corollary 1.1.3~\cite{LST}) is that for a large enough 
non-Abelian finite simple group, every element is a product of two $M^{th}$ powers. Further, when 
$M$ is a product of two prime powers every element of every non-Abelian finite simple group is a 
product of two $M$ powers (see Theorem 1~\cite{GLOST}). In a recent work, the additive version of 
the Waring problem over finite fields is looked at by Kishore~\cite{Ki}. It is proved that for large 
enough $q$, every matrix over a finite field $\mathbb F_q$ is a sum of at most three $M^{th}$ 
powers. 

The surjectivity of power maps for algebraic groups has been studied by Chatterjee~\cite{Ch} and 
Steinberg~\cite{St}. Lubotzky (see~\cite{Lu}) showed that for a finite simple group a subset 
which is invariant under automorphisms of the group and contains the identity, can be obtained as 
an image of a word map. Estimates on the image size of a word map is obtained by Larsen and 
Shalev (see the Section 2~\cite{La}). The power map for the finite groups of Lie type 
$A_n$ and ${}^2A_n$ was studied in~\cite{gksv} which lead to certain interesting bounds on the size 
of $\text{GL}(n,q)^M$; further leading to the solution of one of Shalev's conjectures for power 
maps. An asymptotic formula for finite reductive groups when $q\rightarrow \infty$ for 
various quantities studied here, is obtained in~\cite{KKS} (see Theorem 1.1).  Thus, it is 
an interesting question to precisely determine the image of power maps for finite reductive groups. 
We begin this study with the group $\text{GL}(n,q)$ here.

Kung~\cite{ku} developed cycle index for $\text{GL}(n, q)$ analogous to Polya's cycle index theory 
for $S_n$. As an application, he obtained the generating function for the number of derangements. 
This 
was further applied by Stong~\cite{st1} to get several asymptotic results for various cycle 
statistics. Wall~\cite{wa, wa1} worked on the enumeration of conjugacy classes for classical 
groups. Fulman~\cite{Fu, Fu1} developed cycle index theory for other finite classical groups and 
provided a neater version of the cycle index generating function for $\text{GL}(n,q)$. He further 
took up a systematic study of enumeration and estimation of cyclic and separable and semisimple 
elements. Wall (see~\cite{wa2}) also studied the asymptotics of the proportion of cyclic 
and separable invertible matrices over $\mathbb F_q$. Our work generalizes some of these works as 
by taking $M=1$ we get the corresponding known formula. These works were followed up in~\cite{FNP} 
by Fulman, Neumann and 
Praeger where they extended the earlier results to all classical groups and obtained generating 
functions for regular, regular semisimple, semisimple conjugacy classes as well as elements. 
Britnell~\cite{Br1,Br2} studied this for special linear groups and unitary groups. Morrison has 
collected some of these generating functions for matrices in~\cite{Mo}. Estimates for 
powers in the symmetric group is due to Pouyanne (see~\cite{Po}) which is generalised to the wreath 
products $G\wr S_n$ in~\cite{KM}. These works motivated us to obtain generating functions for such 
elements which are powers. 

We also mention that our work provides a solution to an exercise by Stanley~\cite[Exercise 180, 
Chapter 1]{St2} (with difficulty rating 5, i.e, unsolved), which asks to count how many matrices 
over $\mathbb F_q$ have square roots. Finally, we mention that the discrete log problem which has 
implications in cryptography asks to find $M$ if we are given $x$ and $y=x^M$ in a group $G$. For 
the group $\text{GL}(n,q)$ this was studied by Menezes and Wu in~\cite{MW}. We hope our work will 
provide some insight into this subject too. 

\subsection{Organisation of the paper}
We divide this problem into two separate cases depending on, if $M$ and $q$ are coprime or not. The 
Jordan decomposition of elements necessitates this. For an element $g \in \text{GL}(n, q)$ we can 
write $g=g_s g_u=g_u g_s$ uniquely, where $g_s$ is semisimple part and $g_u$ is unipotent part of 
$g$. Thus, $g^M = g_s^Mg_u^M$. The semisimple elements are of order coprime to $q$, and the 
unipotent elements are of order a power of $q$. First, we take $M$ such that $(q,M)=1$. In this 
case, all unipotent elements will remain in the image of $\omega$. Hence, in the counting of 
$M^{th}$ powers, mainly, semisimple elements play a role. The generating function for regular and 
regular semisimple classes which are $M^{th}$ power, is in Theorem~\ref{Theorem-rs-rgCC}, and, for 
regular and regular semisimple elements it is in Theorem~\ref{Theorem-rs-rgAE}. In this case, to 
deal with semisimple elements and more general elements, we further assume $M=r^a$, where $r$ is a 
prime. We get the generating function for semisimple classes and semisimple elements which are 
$M^{th}$ powers, in Theorem~\ref{gen-fun-primepower} and for all elements in 
Theorem~\ref{Theorem-AE}. When $M$ is a prime we have a simpler formula for all these which we 
record in Section~\ref{generating-function-prime}. In Theorem~\ref{formula_M-powers_GL} we 
determine the exact value of the $M^{th}$ powers in some cases and show that limits obtained 
in~\cite{KKS} are achieved. For our work, we need to understand the factorisation of certain 
composed polynomials; thus we define M-power polynomials and study them in 
Section~\ref{M-power-polynomials}. In Section~\ref{Section-powers}, we develop combinatorial 
criteria for when a conjugacy class of $\text{GL}(n,q)$ is $M^{th}$ power. 
	
In the case when $(q,M)\neq 1$, the analysis could get complicated. We work with the case when $M$ 
is a prime, and $q$ is a power of $M$. In this case, all semisimple elements remain in the image. 
Miller~\cite{Mi} dealt with an instance of this when he counted squares in characteristic $2$. This 
part of our work generalises that of Miller. The generating function for conjugacy classes which are 
$M^{th}$ power is in Theorem~\ref{Theorem-modular}.

\subsection*{ Acknowledgement} The authors thank Amit Kulshrestha for his interest in this work. We 
also thank Will Sawin for his comments which helped us get a neat formula in 
Proposition~\ref{formula-nmqd}. The authors would like to express their gratitude to the 
anonymous referees whose comments helped improve the paper. 

\section{Cycle index in $\text{GL}(n, q)$}\label{Scycle-index}

Conjugacy classes for the group $\text{GL}(n, q)$ is given by the theory of rational canonical 
forms. The enumeration of classes is done by Macdonald in~\cite{Ma} and Wall in~\cite{wa2}. To deal 
with several other enumeration problems, Kung~\cite{ku} and Stong~\cite{st1} developed the notion of 
cycle index and used it to get asymptotic results. Fulman~\cite{Fu, Fu1} gave alternate proofs for 
various generating functions, and further developed cycle index for other finite classical groups. 
We recall some of it, so that, we set notation for what follows in this article.  

The group $\text{GL}(n, q)$ is the set of invertible $n$-by-$n$ matrices over the finite field 
$\mathbb F_q$. Let $\Phi$ denote the set of all non-constant, monic, irreducible polynomials $f(x)$ 
(sometimes we simply write $f$) with coefficients in $\mathbb{F}_{q}$ which is not equal to the 
polynomial $x$. A conjugacy class of $\text{GL}(n, q)$ is determined by an associated combinatorial 
data as follows. Let $\Lambda$ be the set of all partitions $\lambda=(\lambda_1, \lambda_2, \ldots, 
\lambda_r)$ where $\lambda_1 \geq \lambda_2\geq  \ldots \geq \lambda_r \geq 0$ are integers. The set 
$\Lambda$ consists of $\lambda$ which are all possible partitions of all non-negative integers 
$|\lambda|$ where $|\lambda|$ is defined to be the sum of its parts. This includes the empty 
partition of $0$. To each $f\in \Phi$, we associate a partition $\lambda_f=(\lambda_{f,1}, 
\lambda_{f,2}, \ldots )$ of some non-negative integer $|\lambda_f|$. A conjugacy class of 
$\text{GL}(n, q)$ is in one-one correspondence with a function $\Phi \rightarrow \Lambda$ which 
takes value the empty partition on all but finitely many polynomials in $\Phi$, and satisfies 
$\displaystyle\sum_{f\in \Phi} |\lambda_f| deg(f) = n$. Thus, the  conjugacy class of an element 
$\alpha\in \text{GL}(n ,q)$ corresponds to the associated {\bf combinatorial data} 
$\Delta_{\alpha}$, which consists of distinct polynomials $f_1, \ldots, f_l$ and associated 
partitions $\lambda_{f_i}= (\lambda_{i_1}, \lambda_{i_2}, \ldots)$ for all $i$. In this notation, we 
keep only those $f_i$ on which the function for a conjugacy class takes value non-empty partitions 
$\lambda_{f_i}$. Given $\Delta_{\alpha}$, we can easily write down a representative of the conjugacy 
class of $\alpha$ as follows. For $f\in \Phi$, say, $f=x^d + a_{d-1}x^{d-1}+ \ldots + a_1 x + a_0$, 
we write the corresponding companion matrix $C(f) = \begin{psmallmatrix} 0& 0 & \cdots & 0 & -a_0\\ 
1&  0& \cdots & 0 & -a_1\\  & \ddots & \ddots &  \vdots& \vdots\\ & & 1& 0 & -a_{d-2}\\ &&&&\\ & & 
&1 & -a_{d-1} \end{psmallmatrix}$ where empty places represent $0$. Then, a matrix representative of 
the conjugacy class of $\alpha$ is the block-diagonal matrix $\diag(R_1, R_2,\ldots, R_l)$ where 
$R_i$ is the block diagonal matrix $\diag( J_{f_i, \lambda_{i_1}},  J_{f_i, \lambda_{i_2}}, \ldots 
)$ corresponding to various $f_i$. The matrix $J_{f_i, \lambda_{i_r}}$ is a block matrix of size 
$\lambda_{i_r}$ with each block size $deg(f_i)$ given as follows
$$ J_{f_i, \lambda_{i_r}} = \begin{psmallmatrix} C(f_i) & I & 0& \cdots &\cdots& 0\\
	&  C(f_i) & I &0 &\cdots& 0\\  &&\ddots&\ddots&\ddots&\vdots\\ & & &\ddots & \ddots & 0 \\
& &  && C(f_i) & I \\  &&  & & & C(f_i)
\end{psmallmatrix}$$
where $I$ denotes the identity matrix. 
Thus, the matrix $J_{f_i, \lambda_{i_r}}$ is a matrix of size $deg(f_i)\lambda_{i_r}$. We also 
remark that the conjugacy classes of $\text{GL}(n, q)$ correspond to the isomorphism class of 
$\mathbb F_q[x]$ module structure on the vector space $\mathbb F_q^n$.  

We recall some notation regarding partitions. The notation $\lambda \vdash n$ means $\lambda$ is a 
non-empty partition of $n$. The {\bf power notation} for partition $\lambda=1^{m_1(\lambda)}\cdots  
i^{m_i(\lambda)} \cdots n^{m_n(\lambda)}$ means that $i$ appears $m_i(\lambda)$ times in the 
partition where $m_i(\lambda) \geq 0$. The notation $\lambda^{'} = (\lambda_1', \lambda_2'\ldots )$ 
denotes the transpose partition of $\lambda$.  
	
With this knowledge of conjugacy class description the cycle index of $\text{GL}(n,q)$ is described 
(see~\cite{ku}) as follows. Let $x_{f,\lambda}$ be a variable associated to a pair $(f, \lambda)$ 
where $f$ is a polynomial and $\lambda$ a partition. The {\bf cycle index} is defined to be
$$ Z_{\text{GL}(n,q)} = \frac{1}{|\text{GL}(n,q)|} \sum_{\alpha \in \text{GL}(n, q)} 
\prod_{\begin{smallmatrix}f\in \Phi \\ |\lambda_f(\alpha)|>0 \end{smallmatrix}} 
x_{f,\lambda_f(\alpha)}. $$
The significance of this expression is that the coefficient of a monomial represents the probability 
that an element $\alpha$ of $\text{GL}(n, q)$ belongs to its conjugacy class, which is, one over the 
order of its centralizer (or that of any representative of its conjugacy class). 
Since the order of the centralizer of an element $\alpha$ in $\text{GL}(n, q)$ depends only on its 
combinatorial data $\Delta_{\alpha}$ and is given as follows: 
\begin{small}$$ \prod_{f\in \Delta_{\alpha}} \left( q^{\displaystyle deg(f).\sum_{i} 
(\lambda^{'}_{f,i})^2} \prod_{i\geq 1}\left (\frac{1}{q^{deg(f)}}\right )_{m_i(\lambda_f)} \right 
)$$
\end{small}
where the notation $\left(\frac{u}{q}\right)_i$ denotes $(1-\frac{u}{q})(1-\frac{u}{q^2})\cdots 
(1-\frac{u}{q^i})$.
Thus, the {\bf cycle index generating function} (see Section 2.1~\cite{Fu1}) is given as follows, 
\begin{small} 
\begin{equation}\label{cycle-index} 
\displaystyle 1+ \sum_{n=1}^{\infty} Z_{\text{GL}(n,q)} u^n=  \prod_{f\in \Phi} \left( 1+ 
\sum_{j \geq 1} \sum_{\lambda \vdash j} x_{f,\lambda} \frac{u^{j.deg(f)}}{\displaystyle q^{ 
deg(f).\sum_{i} (\lambda^{'}_{i})^2} \prod_{t \geq 1} 
\left(\frac{1}{q^{deg(f)}}\right)_{m_t(\lambda)}} \right).
\end{equation}
\end{small}
Fulman used this to get asymptotic behaviour of various quantities. We will make use of this 
equation frequently while writing various generating functions later.

\subsection{Regular, Semisimple and Regular Semisimple elements}

We recall some definitions here. An element $\alpha\in \text{GL}(n, q)$ is said to be {\bf regular} 
if the associated combinatorial data $\Delta_{\alpha}$ consists of polynomials $f_1, \ldots, f_l$ 
and each $\lambda_{f_i}$ has single part in its partition, i.e., partitions 
$\lambda_{f_1}=(\lambda_1), \ldots, \lambda_{f_l}=(\lambda_l)$ where $\lambda_i=|\lambda_{f_i}|$ for 
all $i$. A typical canonical element of this kind is $\diag(J_{f_1,\lambda_1}, J_{f_2,\lambda_2}, 
\ldots, J_{f_l,\lambda_l})$. An element $\alpha$ is said to be {\bf semisimple} if all parts in all 
partitions of $\Delta_{\alpha}$ has the value $1$, i.e., $\lambda_{f_i}=(1, 1, \ldots, 
1)=1^{|\lambda_{f_i}|}$ for all $i$. Thus, a typical canonical semisimple element would look like 
$\diag\left(\underbrace{C(f_1), \ldots, C(f_1)}_{|\lambda_{f_1}|}, \ldots, \underbrace{C(f_l), 
\ldots, C(f_l)}_{|\lambda_{f_l}|} \right)$. An element $\alpha$ is said to be {\bf regular 
semisimple} if the associated combinatorial data $\Delta_{\alpha}$ has all partitions 
$\lambda_{f_i}=(1)$ for all $i$. A typical canonical regular semisimple element would look like 
$\diag(C(f_1), C(f_2), \ldots, C(f_l))$. The set of regular, regular semisimple and semisimple 
elements in $\text{GL}(n, q)$ is denoted as $\text{GL}(n, q)_{\rg}$, $\text{GL}(n, q)_{\rs}$ and 
$\text{GL}(n ,q)_{\s}$ respectively. The number of all conjugacy classes, regular, regular 
semisimple and semisimple conjugacy classes in $\text{GL}(n, q)$ is denoted as $c(n,q)$, 
$c(n,q)_{\rg}$, $c(n,q)_{\rs}$ and $c(n,q)_{\s}$ respectively. The generating function for the 
number of conjugacy classes is (see~\cite[1.13]{Ma}) 
\begin{equation}\label{number-cclasses}
1+ \sum_{n=1}^{\infty} c(n,q) u^n = \prod_{i=1}^{\infty} \frac{1-u^i}{1-qu^i}.
\end{equation}

Let $\tilde N(q, d)$ be the number of monic, irreducible polynomials of degree $d$ over the field 
$\mathbb F_q$. We can express this using the M\"obius function $\mu(n)$ as follows:
$\displaystyle \tilde N(q,d)= \frac{1}{d} \sum_{r\mid d} \mu(r) q^{\frac{d}{r}}$.
We denote the number of monic, irreducible polynomials of degree $d$ over the field $\mathbb F_q$, 
except the polynomial $x$, by $N(q, d)$. Thus, we have $N(q, d)= \tilde N(q, d)$ except for $d=1$ 
and $N(q, 1) = \tilde N(q, 1) - 1 = q-1$. Other ways of computing this using generating functions 
can be found in~\cite[Lemma 1.3.10]{FNP}). Since, the regular (as well as semisimple) conjugacy 
classes in $\text{GL}(n, q)$ are in one-one correspondence with the monic polynomials of degree $n$ 
with non-zero constant term, we have
$$c(n,q)_{\rg}=c(n,q)_{\s}=q^n - q^{n-1}$$ 
and the generating function for $c(n,q)_{\rg}$ (and $c(n,q)_{\s}$) is 
\begin{equation}\label{number-rgclasses}
c(q,u)=1+\sum_{n=1}^{\infty} c(n,q)_{\rg} u^n = 1 +  \sum_{n=1}^{\infty} c(n,q)_{\s} u^n = 
\prod_{d\geq 1} (1-u^d)^{-N(q,d)}= \frac{1-u}{1-qu}.
\end{equation}
Now, we have the generating function for regular elements as follows:
$$ 1+ \sum_{n=1}^{\infty} \frac{|\text{GL}(n,q)_{\rg}|}{|\text{GL}(n,q)|} u^n = \prod_{d\geq 1} 
\left( 1 + \sum_{j=1}^{\infty} \frac{u^{jd}}{q^{(j-1)d}(q^d - 1)} \right)^{N(q,d)}.$$
The product on right hand side runs over degree $d$ of polynomials in $\Phi$ where we club together 
the ones of same degree. Wall~\cite[Equation 2.5]{wa2} has an alternate version of this:
$$1+ \sum_{n=1}^{\infty} \frac{|\text{GL}(n,q)_{\rg}|}{|\text{GL}(n,q)|}u^n = \prod_{d\geq 1} 
\left(1-\frac{u^d}{q^d} \right)^{-N(q,d)} \prod_{d\geq 1} \left( 1 + \frac{u^d}{q^d(q^d-1)} 
\right)^{N(q,d)}.$$
The generating function for semisimple elements is:
$$ 1+ \sum_{n=1}^{\infty} \frac{|\text{GL}(n,q)_{\s}|}{|\text{GL}(n,q)|} u^n = \prod_{d\geq 1} 
\left(1+\sum_{j=1}^{\infty} \frac{u^{jd}}{q^{\frac{j(j-1)}{2}d}\prod_{i=1}^n(q^{id}-1)} 
\right)^{N(q,d)}.$$

For a regular semisimple element, the characteristic polynomial is separable and it coincides with 
its minimal polynomial. Hence, the regular semisimple classes in $\text{GL}(n, q)$ are in one-one 
correspondence with the separable monic polynomials with non-zero constant term, thus we have 
(see~\cite[Theorem 1.1]{fjk} and \cite[Theorem 2.2]{FG})
$$ c(n,q)_{\rs}= \frac{q^{n+1} - q^n + (-1)^{n+1}(q-1)}{q+1}$$
and the generating function is 
\begin{equation}\label{number-rsclasses}
s(q,u)=1 + \sum_{n=1}^{\infty} c(n,q)_{\rs} u^n = \prod_{d\geq 1} 
(1+u^d)^{N(q,d)}=\frac{1-qu^2}{(1+u)(1-qu)}.
\end{equation}

For the number of regular semisimple elements (see~\cite[Equation 2.11]{wa2}) we have the following 
generating function,
$$1+ \sum_{n=1}^{\infty} \frac{|\text{GL}(n,q)_{\rs}|}{|\text{GL}(n,q)|}u^n= \prod_{d\geq 1} 
\left(1+\frac{u^d}{q^d-1} \right)^{N(q,d)}.$$

\subsection{The main problem}\label{notation}

We fix an integer $M\geq 2$, and consider the power map $\omega_M \colon \text{GL}(n, q) \rightarrow 
\text{GL}(n,q)$ given by $x\mapsto x^M$. Let us denote the number of conjugacy classes, regular 
conjugacy classes, regular semisimple conjugacy classes and the semisimple conjugacy classes in the 
image of $\omega$ by $c(n,q, M)$, $c(n,q,M)_{\rg}$, $c(n,q,M)_{\rs}$, $c(n,q,M)_{\s}$ 
respectively. Our aim is to get the generating functions for these quantities. Further, let us 
denote the set of regular elements, regular semisimple elements and the semisimple elements in the 
image of $\omega$ by $\text{GL}(n, q)^M_{\rg}$, $\text{GL}(n, q)^M_{\rs}$ and $\text{GL}(n,q 
)^M_{\s}$ respectively.
We want to get the generating functions for the proportions of such elements $\displaystyle 
\frac{|\text{GL}(n,q)^M_{\rg}|}{|\text{GL}(n,q)|}$, $\displaystyle 
\frac{|\text{GL}(n,q)^M_{\rs}|}{|\text{GL}(n,q)|}$ and 
$\displaystyle\frac{|\text{GL}(n,q)^M_{\s}|}{|\text{GL}(n,q)|}$. To do this, first we need to 
determine the following: For a given $\alpha\in \text{GL}(n, q)$ where $\alpha$ is either regular, 
regular semisimple or semisimple, when does the equation
$$X^M=\alpha$$
have a solution in $\text{GL}(n, q)$? We need to characterise such $\alpha$ in terms of its 
combinatorial data $\Delta_{\alpha}$ which would lead to the required enumeration.

\section{$M$-power polynomials}\label{M-power-polynomials} 
	
In our work we need to deal with certain polynomials. We begin with describing them. Recall that the 
set of all monic, irreducible polynomials of degree $\geq 1$ over the field $\mathbb F_q$, except 
$x$, is denoted as $\Phi$. Counting this set is done using $N(q, d)$ described earlier. Let 
$M\geq 2$ be an integer. For a polynomial $f(x) = x^d+a_{d-1} x^{d-1} + \ldots + a_1x + a_0 \in 
\mathbb F_q[x]$ we denote the composed polynomial,
$$f(x^M) = x^{Md} + a_{d-1} x^{M(d-1)} + \ldots + a_1 x^M + a_0.$$ 
Now we define,
\begin{definition}[M-power polynomial]
A non-constant, irreducible, monic polynomial $f(x) \in \mathbb{F}_{q}[x]$ is said to be an {\bf 
M-power polynomial} if $f(x^M)$ has an irreducible factor of $deg(f(x))$. In general, a 
non-constant, monic polynomial $f$ is said to be an {\bf M-power polynomial} if each irreducible 
factor of $f$ is an M-power polynomial.
\end{definition} 
\noindent For example, the polynomial $x-a\in \mathbb F_q[x]$ is M-power if and only if $a\in 
\mathbb F_q^M$. We denote the set of monic, irreducible polynomials which are M-power by $\tilde 
\Phi^M$ and denote $\Phi^M = \tilde \Phi^M\setminus \{ x\}$. Let $\tilde N_M(q,d)$ be the number of 
polynomials of degree $d$ in $\tilde \Phi^M$ and $N_M(q,d)$ be that of $\Phi^M$. We have a simple 
relation $N_M(q,d)= \tilde N_M(q,d)$ except for $d=1$ and $N_M(q,1)= \tilde N_M(q,1)-1 
=\frac{q-1}{(M,q-1)}$. 
\begin{example}
Let us compute $N_2(q,2)$, i.e., the number of $2$-power polynomials of degree $2$ over $\mathbb 
F_q$. An irreducible polynomial $f$ of degree $2$ can be factored over $\mathbb F_{q^2}$ as 
$f(x)=(x-\alpha)(x-\sigma(\alpha))$ where $\sigma$ is the Frobenius automorphism. Now, 
$f(x^2)=(x^2-\alpha)(x^2-\sigma(\alpha))$ has a factor of degree $2$ over $\mathbb F_q$ if and only 
if $\alpha$ has a square root. Thus, for $N_2(q, 2)$ we need to count elements $\alpha$ which are in 
$(\mathbb F_{q^2}^*)^2$ but not in $\mathbb F_q$. We get, $N_2(q,2) = \frac{1}{2} 
\left(\frac{q^2-1}{(2, q^2-1)} -(q-1) \right)$.  
\end{example}
\noindent More generally we have the following,
\begin{proposition}\label{formula-nmqd}
For $d> 1$ we have,
$$N_M(q,d)= \frac{1}{d} \sum_{r\mid d} \mu(r)\frac{\left(M(q^{d/r}-1), (q^d-1) 
\right)}{(M,q^d-1)}.$$
\end{proposition}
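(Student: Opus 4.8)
The idea is to translate the condition ``$f$ is $M$-power'' into a statement about the roots of $f$, reduce the count to counting $M$-th powers in the cyclic groups $\mathbb F_{q^e}^{*}$ for $e\mid d$, and finish with M\"obius inversion.

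First I would establish the following root-theoretic description: a monic irreducible $f$ of degree $d$ is $M$-power precisely when all roots of $f$ lie in the subgroup $(\mathbb F_{q^d}^{*})^{M}$ of $M$-th powers. Indeed, if $g\mid f(x^{M})$ is irreducible of degree $d$ with root $\beta$, then $\mathbb F_q(\beta)=\mathbb F_{q^d}$, and $\beta^{M}$ is a root of $f$, hence $\beta^{M}\in\mathbb F_{q^d}$; so some root of $f$ — and then, applying the Frobenius, every root, since $\alpha=\gamma^{M}$ gives $\alpha^{q^{i}}=(\gamma^{q^{i}})^{M}$ — is an $M$-th power in $\mathbb F_{q^d}^{*}$. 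Conversely, if a root $\alpha=\gamma^{M}$ of $f$ has $\gamma\in\mathbb F_{q^d}^{*}$, then $\mathbb F_{q^d}=\mathbb F_q(\alpha)\subseteq\mathbb F_q(\gamma)\subseteq\mathbb F_{q^d}$, so $\gamma$ has degree $d$ over $\mathbb F_q$ and its minimal polynomial is a degree-$d$ irreducible factor of $f(x^{M})$. Since $d>1$ forces $f\neq x$, it follows that the degree-$d$ irreducible $M$-power polynomials are exactly the minimal polynomials of the elements of $(\mathbb F_{q^d}^{*})^{M}$ of degree exactly $d$ over $\mathbb F_q$, each polynomial corresponding to its $d$-element Galois orbit. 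Hence $d\cdot N_M(q,d)$ equals the number of $\alpha\in(\mathbb F_{q^d}^{*})^{M}$ of degree exactly $d$.

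Next I would run M\"obius inversion over the divisors of $d$. For $e\mid d$, set $G(e)=\#\bigl(\mathbb F_{q^e}^{*}\cap(\mathbb F_{q^d}^{*})^{M}\bigr)$; since every element of $\mathbb F_{q^e}^{*}$ has degree over $\mathbb F_q$ equal to a unique divisor of $e$, we get $G(e)=\sum_{e'\mid e}\#\{\alpha\in(\mathbb F_{q^d}^{*})^{M}:\deg\alpha=e'\}$, and therefore $d\cdot N_M(q,d)=\sum_{e\mid d}\mu(d/e)\,G(e)$. To evaluate $G(e)$, note that inside the cyclic group $\mathbb F_{q^d}^{*}$ of order $q^{d}-1$, the subgroup $(\mathbb F_{q^d}^{*})^{M}$ is the unique one of order $(q^{d}-1)/(M,q^{d}-1)$ and $\mathbb F_{q^e}^{*}$ is the unique one of order $q^{e}-1$, so their intersection has order $\bigl(q^{e}-1,\,(q^{d}-1)/(M,q^{d}-1)\bigr)$. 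Finally one needs the elementary identity
\[
\Bigl(q^{e}-1,\ \frac{q^{d}-1}{(M,q^{d}-1)}\Bigr)=\frac{\bigl(M(q^{e}-1),\,q^{d}-1\bigr)}{(M,q^{d}-1)}\qquad(e\mid d),
\]
a special case of $(a,\,b/(M,b))\cdot(M,b)=(Ma,b)$ for $a\mid b$, which I would verify prime by prime: with $\alpha=v_p(a)\le\beta=v_p(b)$ and $m=v_p(M)$ it reduces to $\min(m,\beta)+\min(\alpha,\max(\beta-m,0))=\min(m+\alpha,\beta)$, checked in the two cases $m\ge\beta$ and $m<\beta$. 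Substituting into the M\"obius formula and re-indexing with $r=d/e$ yields the claimed expression.

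The step requiring the most care is the M\"obius inversion: one must invert using the \emph{fixed} large field $\mathbb F_{q^d}$ throughout — comparing against $(\mathbb F_{q^d}^{*})^{M}$, not $(\mathbb F_{q^e}^{*})^{M}$ — because an element of a proper subfield can be an $M$-th power in $\mathbb F_{q^d}^{*}$ without being one in its own field of definition, so the naive subfield recursion fails. The arithmetic identity in the last step is the only other place where a short computation is genuinely needed; the rest is bookkeeping.
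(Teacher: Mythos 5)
Your proof is correct and follows essentially the same route as the paper: both reduce the count to field generators of $\mathbb{F}_{q^d}$ lying in $(\mathbb{F}_{q^d}^{*})^{M}$ and apply M\"obius inversion over subfields of $\mathbb{F}_{q^d}$, keeping the ambient field fixed throughout exactly as you emphasize. The only (cosmetic) difference is in evaluating the subfield count: the paper counts solutions of $\alpha^{M(q^e-1)}=1$ and divides by the kernel size $(M,q^d-1)$ to get $\left(M(q^{e}-1),q^{d}-1\right)/(M,q^{d}-1)$ directly, whereas you compute a cyclic-subgroup intersection and then invoke your gcd identity to reach the same expression.
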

\begin{proof}
Our proof is generalisation of the proof for $N(q, d)$ in~\cite{CM}.
Let $f$ be an irreducible M-power polynomial of degree $d> 1$.  That is, $f(x^M)$ has an 
irreducible factor of degree $d$. The irreducible polynomial $f$ is characterised with its root in 
$\mathbb F_{q^d}$. Consider the field extension $\mathbb F_{q^d}$ of $\mathbb F_q$ and the power map 
$\theta\colon \mathbb{F}_{q^d}^*\to \mathbb{F}_{q^d}^*$ defined by $\theta(x)=x^M$. Thus, M-power 
polynomial $f$ is characterised by an element in the image of $\theta$ which is primitive. Now, 
consider the set 
$$T= \{\alpha \in \mathbb{F}_{q^d}^* \mid \alpha\in \mathbb F_{q^d}^M, \alpha \notin \text{any 
proper subfield of } \mathbb{F}_{q^d}\}.$$ 
Then, we get $N_M(q,d)= \frac{1}{d}|T|$. Now we count the set $T$. To do this, we count $T(d,e)= 
\{ \alpha^M \in \mathbb{F}_{q^e} \mid  \alpha \in \mathbb{F}_{q^d} \}$ and apply the 
inclusion-exclusion principle. The number of pre-images of each element in $Im(\theta)$ is 
$|Ker(\theta)|$, which is $(M, q^d-1)$. Now, suppose $\alpha \in \mathbb{F}_{q^d}^{*}$, such that 
$\alpha^M \in \mathbb{F}_{q^e}$. Then $\left(\alpha^{M}\right)^{q^e}= \alpha^M$. Thus, the number of 
$\alpha \in \mathbb{F}_{q^d}^{*}$, which are solution of the equation $\alpha^{M(q^e-1)}=1$, is 
$(M(q^e-1), q^d-1)$. Hence we get $|T(d,e)|= \frac{(M(q^e-1), q^d-1)}{(M, q^d-1)}$. The result 
follows.
\end{proof}
\noindent For some small values of $M$ and $d$ we write $N_M(q,d)$ in tables below.

\begin{center}
\begin{tabular}{|c|c|c|c|}\hline
$q$ & $N_2(q, 2)$ & $N_2(q, 3)$ & $N_2(q, 4)$ \\ \hline
odd & $\frac{1}{4}(q-1)^2$ & $\frac{1}{6}(q^3-q)$ & $\frac{1}{8}(q^2-1)^2$ \\
even & $\frac{1}{2}(q^2-q)$ & $\frac{1}{3}(q^3-q)$ & $\frac{1}{4}(q^4-q^2)$ \\ \hline
\end{tabular}\vskip2mm
\end{center}

\begin{center}
\begin{tabular}{|c|c|c|c|}\hline
$q\imod 3$ & $N_3(q, 2)$ & $N_3(q, 3)$ & $N_3(q, 4)$ \\ \hline
$0$ & $\frac{1}{2}(q^2-q)$ & $\frac{1}{3}(q^3-q)$ & $\frac{1}{4}(q^4-q^2)$ \\
$1$ & $\frac{1}{6}(q^2-q)$ & $\frac{1}{9}(q^3-3q+2)$ & $\frac{1}{12}(q^4-q^2)$ \\
$2$ & $\frac{1}{6}(q-1)(q-2)$ &$\frac{1}{3}(q^3-q)$ & $\frac{1}{12}(q^4-q^2)$. \\ \hline
\end{tabular}\vskip2mm
\end{center}
The irreducibility of composed polynomials is studied in literature. We bring together the results 
when $M$ is a prime power and is co-prime to $q$.

\subsection{$(q,M)=1$ and $M$ is a prime power}

In this subsection, we assume $(q,M)=1$. There is an extensive literature to determine the factors 
of polynomial $f(x^M)$ (more generally for composition of two polynomials) and their degrees.  For 
$(s, q)=1$, the notation $\mathfrak M(s; q)$ is the order of $q$ in $(\mathbb Z/s\mathbb 
Z)^{\times}$, that is $\mathfrak M(s; q)$ is the smallest integer such that $q^{\mathfrak M(s; q)} 
\equiv 1 \imod{s}$.  For an irreducible polynomial $f(x)$, which is not $x$, exponent of $f$ is the 
order of a root (which is same for all roots) of $f(x)$ in the multiplicative group $\bar {\mathbb 
F_q}^*$ (see~\cite[Chapter 3, Section 1]{LN}). We mention the following result due to Butler 
(see~\cite[Theorem in Section 3]{Bu}) which we need in sequel.
\begin{proposition}[Butler]\label{Butler}
Let $f(x)$ be an irreducible polynomial of degree $d$ over $\mathbb F_q$ and $(q,M)=1$. Let $t$ 
be the exponent of $f(x)$. Write $M=M_1 M_2$ in such a way that $(M_1,t)=1$ and each prime factor of 
$M_2$ is a divisor of $t$. Then,
\begin{enumerate}
\item $f(x^M)$ has no repeated roots.
\item The multiplicative order of each root of $f(x^M)$ in $\bar {\mathbb F_q}^*$ is 
$M_2tb$, for some $b$ that $b\mid M_1$.
\item Further, for a fixed $b\mid M_1$, the number of irreducible factors of $f(x^M)$ of 
which roots have the above order is $$\frac{M_2d\phi(b)}{\mathfrak M(M_2tb; q)}$$
and each of the factors is of degree $\mathfrak M(M_2tb; q)$, where $\phi$ is Euler's 
totient function.
\end{enumerate}
\end{proposition}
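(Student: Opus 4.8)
The plan is to reduce the whole statement to an analysis of the multiplicative orders of roots of $f(x^M)$, since once we know those orders we know the degree of the corresponding irreducible factor (the degree of the minimal polynomial of an element of $\bar{\mathbb F_q}^*$ of order $e$ is exactly $\mathfrak M(e;q)$) and hence, by a counting argument, the number of factors. Fix a root $\alpha$ of $f$ in $\mathbb F_{q^d}^*$; then $\alpha$ has order $t$ and $f$ is its minimal polynomial, so the roots of $f(x^M)$ are precisely the $M$-th roots $\beta$ of the various $\mathbb F_q$-conjugates of $\alpha$, i.e. the solutions of $\beta^M = \alpha^{q^i}$ for $0\le i<d$. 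First I would prove (1): since $(q,M)=1$, the formal derivative of $f(x^M)$ is $Mx^{M-1}f'(x^M)$, and as $f$ is irreducible and separable (its roots are nonzero, so $x\nmid f$) this has no common root with $f(x^M)$; alternatively, each $\alpha^{q^i}$ is a nonzero element of a field of characteristic prime to $M$, hence has exactly $(M,q^d-1)$ distinct $M$-th roots in $\bar{\mathbb F_q}$ and no repeated ones. So $f(x^M)$ is squarefree of degree $Md$.

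Next I would establish (2), the key structural point. Let $\beta$ be any root of $f(x^M)$, so $\beta^M$ has order $t$. I want to show $\operatorname{ord}(\beta) = M_2 t b$ with $b\mid M_1$. Write $\operatorname{ord}(\beta)=e$; then $e\mid Mt$ because $\beta^{Mt}=(\beta^M)^t=1$, and $t\mid e$ is false in general, but $e/\gcd(e,M)$ divides... — here the clean way is to use the coprime splitting $M=M_1M_2$. Decompose $e$ according to primes dividing $M_1$, primes dividing $M_2$, and primes dividing neither. Since $\beta^M$ has order $t$ and $(M_1,t)=1$, the contribution to $e$ from primes dividing $t$ (which includes all primes of $M_2$) must make $\beta^M$ have full order $t$ in those parts; a short argument with $p$-parts of integers shows the $M_2$-part of $e$ is exactly the $M_2$-part of $M_2 t$, the $t$-coprime-to-$M$ part of $e$ equals $t$'s part coprime to $M$, and the $M_1$-part of $e$ is some divisor $b$ of $M_1$ — giving $e = M_2 t b$ with $b \mid M_1$. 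The main obstacle is doing this $p$-adic valuation bookkeeping carefully: for each prime $p$, comparing $v_p(e)$, $v_p(t)$, $v_p(M_1)$, $v_p(M_2)$ and using $(M_1,t)=1$ and "every prime of $M_2$ divides $t$" to pin down $v_p(e)$ exactly when $p\mid M_1$ versus $p\mid M_2$ versus $p\nmid M$.

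Finally, for (3) I would count. By (2) every irreducible factor of $f(x^M)$ has all its roots of order $M_2 t b$ for a single $b\mid M_1$, and such a factor has degree $\mathfrak M(M_2 t b; q)$. It remains to count, for fixed $b\mid M_1$, the number $n_b$ of roots $\beta$ of $f(x^M)$ with $\operatorname{ord}(\beta)=M_2 t b$; the number of factors is then $n_b/\mathfrak M(M_2 t b;q)$. To get $n_b$: the roots of $f(x^M)$ lie in the cyclic group $\mu$ of $M_2 t M_1$-th roots of unity (after checking orders divide $M_2 t M_1$), they are exactly the $\beta$ with $\beta^M$ of order $t$, there are $Md$ of them total, and among the $d$ conjugates $\alpha^{q^i}$ each contributes the $(M,q^d-1)$ many $M$-th roots — but more usefully, in the cyclic group the set $\{\beta : \operatorname{ord}(\beta^M)=t\}$ splits by $\operatorname{ord}(\beta)$, and the count of elements of order exactly $M_2 t b$ whose $M$-th power has order exactly $t$ works out, via Euler's $\phi$ and the structure of $\mu$, to $M_2 d\,\phi(b)$. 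Summing the identity $\sum_{b\mid M_1} M_2 d\,\phi(b) = M_2 d M_1 = Md$ confirms we have accounted for all roots, which is the natural consistency check to include. Assembling (1), (2), (3) gives the proposition.
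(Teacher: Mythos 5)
First, a point of comparison: the paper does not prove this proposition at all --- it is quoted verbatim from Butler's 1955 paper and used as a black box --- so there is no in-paper argument to measure your proposal against. Judged on its own, your plan (reduce everything to the multiplicative orders of the roots, use that an element of order $e$ has minimal polynomial of degree $\mathfrak M(e;q)$, then count roots of each order) is the standard and correct route, and parts (1) and (2) can be completed as you describe: the derivative argument for (1) is fine, and for (2) the identity you need but never quite write down is $\operatorname{ord}(\beta^M)=e/\gcd(e,M)=t$, i.e.\ $e=t\gcd(e,M)$, from which the prime-by-prime valuation comparison does pin down $e=M_2tb$ with $b\mid M_1$ (using that $(M_1,t)=1$ and every prime of $M_2$ divides $t$ forces $(M_1,M_2)=1$).

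The genuine gap is in (3): the equality $n_b=M_2\,d\,\phi(b)$ is the entire content of that part, and you only assert that it ``works out, via Euler's $\phi$ and the structure of $\mu$.'' The consistency check $\sum_{b\mid M_1}M_2d\phi(b)=Md$ is not a proof, since it would also be passed by other distributions. To close the gap: since $(M_1,M_2t)=1$, write $\mu_{Mt}\cong\mu_{M_1}\times\mu_{M_2t}$; a root $\alpha_0$ of $f$ has order $t$ prime to $M_1$, so it lies in the $\mu_{M_2t}$ factor, and the fiber of $\beta\mapsto\beta^M$ over $\alpha_0$ is $\mu_{M_1}\times S$ where $S\subset\mu_{M_2t}$ has $\gcd(M,M_2t)=M_2$ elements; the valuation argument from (2), applied inside $\mu_{M_2t}$, shows every element of $S$ has order exactly $M_2t$, so the fiber elements of order $M_2tb$ are exactly (elements of $\mu_{M_1}$ of order $b$) $\times$ $S$, numbering $\phi(b)\cdot M_2$, and multiplying by the $d$ roots of $f$ gives $n_b=M_2d\phi(b)$. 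Two smaller slips you should fix: your aside ``$t\mid e$ is false in general'' is wrong ($t=\operatorname{ord}(\beta^M)$ always divides $\operatorname{ord}(\beta)=e$ since $\beta^M\in\langle\beta\rangle$); and a nonzero element of $\bar{\mathbb F_q}$ has exactly $M$ distinct $M$-th roots in $\bar{\mathbb F_q}$ when $(M,q)=1$, not $(M,q^d-1)$ of them --- that count is only for roots lying inside $\mathbb F_{q^d}$.
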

\noindent We use this to obtain some further information regarding M-power polynomials when $M$ is a 
prime power.
\begin{lemma}\label{M-is-a-power}
Let $M=r^a$ where $r$ is a prime and $(q,M)=1$. Suppose $f(x)$ is an irreducible polynomial of 
degree $d$ over $\mathbb{F}_{q}$ of exponent $t$.  Then we have the following:
\begin{enumerate}
\item If $r\nmid t$, the polynomial $f(x^M)$ has an irreducible factor of degree $d$, that 
is, $f$ is an M-power polynomial.
\item If $r\mid t$, the polynomial $f(x^M)$ factors as a product of $r^{a-i}$ irreducible 
polynomials each of degree $dr^{i}$ for some $1\leq i \leq a$.
\end{enumerate}
\end{lemma}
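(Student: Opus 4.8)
The plan is to read both parts off Butler's Proposition~\ref{Butler}, exploiting that for $M=r^{a}$ the factorisation $M=M_{1}M_{2}$ appearing there is forced: if $r\nmid t$ then $M_{1}=r^{a}$ and $M_{2}=1$, while if $r\mid t$ then $M_{1}=1$ and $M_{2}=r^{a}$. I will also use throughout the standard identity $\mathfrak M(t;q)=d$ for an irreducible $f$ of degree $d$ and exponent $t$ --- equivalently, a root of $f$ generates $\mathbb F_{q^{d}}$ over $\mathbb F_{q}$ and lies in no proper subfield of it (see~\cite{LN}).

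For (1), suppose $r\nmid t$, so $M_{2}=1$ and $M_{1}=r^{a}$. With $b=1$ in Proposition~\ref{Butler}(3), the irreducible factors of $f(x^{M})$ whose roots have order $M_{2}tb=t$ number $\frac{M_{2}\,d\,\phi(1)}{\mathfrak M(t;q)}=\frac{d}{\mathfrak M(t;q)}=1$, and such a factor has degree $\mathfrak M(t;q)=d$. Hence $f(x^{M})$ has an irreducible factor of degree $d$, i.e.\ $f$ is an $M$-power polynomial.

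For (2), suppose $r\mid t$, so $M_{1}=1$ and $M_{2}=r^{a}$; then $b=1$ is the only admissible value in Proposition~\ref{Butler}, every root of $f(x^{M})$ has multiplicative order $r^{a}t$, and $f(x^{M})$ is a product of $\frac{r^{a}d}{\delta}$ irreducible factors, each of degree $\delta:=\mathfrak M(r^{a}t;q)$. So everything reduces to showing $\delta=d\,r^{i}$ for a suitable $i$ with $0\le i\le a$, since then the factor count becomes $r^{a}d/(d\,r^{i})=r^{a-i}$. Writing $t=r^{c}s$ with $r\nmid s$ and $c\ge1$, the reduction map $(\mathbb Z/r^{a}t\mathbb Z)^{\times}\to(\mathbb Z/t\mathbb Z)^{\times}$ is surjective with kernel of order $\phi(r^{a+c}s)/\phi(r^{c}s)=r^{a}$, hence an $r$-group. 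Since $q$ is a unit modulo $r^{a}t$ of order $\delta$ whose image modulo $t$ has order $\mathfrak M(t;q)=d$, we get $d\mid\delta$; and $q^{d}$ lies in that kernel, so its order divides $r^{a}$, whence $\delta\mid d\,r^{a}$. Therefore $\delta=d\,r^{i}$ with $0\le i\le a$, and feeding this into Butler's count gives the asserted decomposition of $f(x^{M})$ into $r^{a-i}$ irreducible factors of degree $d\,r^{i}$.

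The step I expect to be the main obstacle is the identification of $\delta=\mathfrak M(r^{a}t;q)$, and within it the precise value of $i$: the divisibilities $d\mid\delta\mid d\,r^{a}$ come for free, but locating $i$ exactly --- in particular deciding when $i\ge1$, equivalently when $r^{a}t\nmid q^{d}-1$, equivalently when $f$ fails to be an $M$-power polynomial --- requires comparing the $r$-adic valuations $v_{r}(q^{d}-1)$ and $v_{r}(r^{a}t)=v_{r}(t)+a$. For $r$ odd this is a clean application of the lifting-the-exponent lemma; $r=2$, where $q$ is automatically odd, has to be handled with the $2$-adic version, and this valuation bookkeeping is the only genuinely delicate part of the argument.
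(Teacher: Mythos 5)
Your argument is correct and follows essentially the same route as the paper's: part (1) is read off Butler's proposition with $M_1=M$, $M_2=1$ and $b=1$, and part (2) rests on the same sandwich $d \mid \mathfrak M(r^at;q) \mid dr^a$ — you obtain the upper divisibility from the fact that the kernel of $(\mathbb Z/r^at\mathbb Z)^{\times}\to(\mathbb Z/t\mathbb Z)^{\times}$ is a group of order $r^a$, whereas the paper factors $q^{r^ad}-1$ explicitly and inducts, but the content is identical. Your closing worry about forcing $i\geq 1$ is well-founded yet applies equally to the paper, whose proof likewise only yields $0\leq i\leq a$; in fact $i=0$ can occur when $r\mid t$ (e.g. $q=9$, $M=2$, $f=x+1$, where $t=2$ but $x^2+1$ splits into two linear factors over $\mathbb F_9$), so neither proof — nor any proof — establishes the stated range $1\leq i\leq a$.
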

\begin{proof}
Let $\alpha$ be a root of $f(x)$ and $t$ be its multiplicative order. Then, $\mathbb 
F_q[\alpha]\cong \mathbb F_{q^d}$, hence $t\mid (q^d-1)$ (also gives $(t,q)=1$). In fact, because 
$\mathbb F_{q^d}$ is splitting field of $f$, the number $d$ is smallest with the property that 
$t\mid (q^d-1)$ (see~\cite[Theorem 3.3, 3.5]{LN}), hence $\mathfrak M(t;q)=d$.

First, let $r\nmid t$, then $M_1=M$ and $M_2=1$.   Thus, by taking $b=1$ in part 3 of 
Proposition~\ref{Butler}, $f(x^M)$ has an irreducible factor of degree $\mathfrak M(t;q)=d$. This 
shows that $f$ is an M-power polynomial.   

Now, let us consider the case when $r\mid t$, then $M_1=1$ and $M_2=M=r^a$. Once again applying 
Proposition~\ref{Butler}, all of the irreducible factors of $f(x^M)$ are of same degree, which is 
$\mathfrak M(r^at; q)=s$(say). That is $s$ is obtained from the equation $q^s\equiv 1 \imod{r^at}$. 
We claim that $d\mid s$ and $s\mid r^ad$ thus $s$ would have required form. Since $r^at\mid (q^s-1)$ 
hence $t\mid (q^s-1)$. This combined with the fact that the order of $q$ modulo $t$ is $d$, we get 
that $d\mid s$. Now, for the second one we show $q^{r^ad}\equiv 1 \imod{r^at}$ (which would give 
$s\mid r^ad$). We can write
$$(q^{r^ad}-1) = (q^{dr^{a-1}}-1)(q^{dr^{a-1}(r-1) }+  \cdots + q^d + 1).$$
Going modulo $r$ the second term on right becomes $0$ as $q^d\equiv 1 \imod r$ (as $r\mid t$). 
Thus this term is a multiple of $r$. By further reducing $a-1$, inductively, we get 
$(q^{r^ad}-1)=(q^d-1)r^a.h$ for some $h$. Notice that $t$ divides the first term. Hence the result. 
\end{proof}
\begin{corollary}
With notation as in the Lemma, let f(x) be an irreducible polynomial of degree $d$. Then, 
$\mathfrak{M}(r; q) \nmid d$ implies $f$ is an M-power polynomial. 
\end{corollary}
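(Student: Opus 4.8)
The plan is to reduce everything to a one-line divisibility observation about the exponent $t$ of $f$, feeding off the dichotomy in Lemma~\ref{M-is-a-power}. I would argue by contraposition: it suffices to show that if $f$ is \emph{not} an $M$-power polynomial, then $\mathfrak M(r;q)\mid d$. By part (1) of Lemma~\ref{M-is-a-power}, if $r\nmid t$ then $f$ \emph{is} an $M$-power polynomial; hence the assumption that $f$ is not an $M$-power polynomial forces $r\mid t$. (For completeness one can note that the converse also holds: if $r\mid t$, then by part (2) every irreducible factor of $f(x^M)$ has degree $dr^i$ with $1\le i\le a$, hence degree strictly larger than $d$, so $f$ is not an $M$-power polynomial; but only the forward implication is needed here.)

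Next I would invoke the elementary fact recorded at the start of the proof of Lemma~\ref{M-is-a-power}: a root $\alpha$ of $f$ lies in $\mathbb F_{q^d}^*$ and has multiplicative order $t$, so $t\mid q^d-1$. Combining this with $r\mid t$ gives $r\mid q^d-1$, i.e. $q^d\equiv 1\imod r$. Since $(q,M)=1$ and $M=r^a$, we have $(q,r)=1$, so $\mathfrak M(r;q)$ is well defined as the multiplicative order of $q$ in $(\mathbb Z/r\mathbb Z)^\times$, and $q^d\equiv 1\imod r$ immediately yields $\mathfrak M(r;q)\mid d$. This is exactly the conclusion of the contrapositive, so the corollary follows.

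There is essentially no obstacle: all the substantive work sits in Lemma~\ref{M-is-a-power}, and what remains is the chain $r\mid t\mid q^d-1$ together with the definition of $\mathfrak M(r;q)$. The only minor point to be careful about is that $\mathfrak M(r;q)$ makes sense, which is guaranteed by the standing hypothesis $(q,M)=1$.
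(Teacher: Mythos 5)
Your proof is correct and is essentially the same argument as the paper's: both rest on the chain $r\mid t \Rightarrow r\mid q^d-1 \Rightarrow \mathfrak M(r;q)\mid d$, combined with part (1) of Lemma~\ref{M-is-a-power}; you merely package it as a contrapositive of the whole statement rather than of the auxiliary claim.
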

\begin{proof}
We claim that if $\mathfrak{M}(r; q) \nmid d$ then $r\nmid t$. Suppose $r\mid t$, then $r\mid 
(q^d-1)$. This gives $\mathfrak{M}(r;q) \mid d$, as $\mathfrak{M}(r;q)$ is the smallest with the 
property that $r\mid (q^{\mathfrak{M}(r;q)}-1)$. Now, the result follows by 
Lemma~\ref{M-is-a-power}.
\end{proof}
\noindent When $M$ is a prime we can get an easier way to decide if $f(x)$ is an M-power using 
$\mathfrak M(M; q)$ instead of the exponent which, in general, is difficult to compute.  
\begin{lemma}\label{M-is-prime}
Let $M$ be a prime and $(q,M)=1$. Let $f(x)$ be an irreducible polynomial of degree $d$ over 
$\mathbb{F}_{q}$. Then we have the following:
\begin{enumerate}
\item If $\mathfrak M(M; q)\nmid d$, then $f(x^M)$ factors as a product of an irreducible 
polynomial of degree $d$, and $\frac{d(M-1)}{lcm(\mathfrak M(M; q), d)}$ irreducible polynomials of 
degree $lcm(\mathfrak M(M;q), d)$. Thus, $f$ is an M-power polynomial.
\item If $\mathfrak M(M; q)\mid d$, then $f(x^M)$ is either irreducible or has a factor of 
degree $d$. Thus, if $f(x^M)$ is reducible it is M-power.
\end{enumerate}
\end{lemma}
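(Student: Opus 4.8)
The plan is to read both parts off Butler's theorem (Proposition~\ref{Butler}) together with Lemma~\ref{M-is-a-power}, which already treats prime powers; since $M$ is prime we are in the case $M=r^1$ of that lemma, i.e.\ $a=1$. Throughout I would let $\alpha$ be a root of $f$ and $t$ its multiplicative order, so that $t\mid q^d-1$ and $\mathfrak M(t;q)=d$, exactly as set up at the start of the proof of Lemma~\ref{M-is-a-power}. The case split in the statement is governed by whether $\mathfrak M(M;q)\mid d$, and the bridge to Butler's parameters is the implication ``$M\mid t\Rightarrow M\mid q^d-1\Rightarrow \mathfrak M(M;q)\mid d$'' already used in the proof of the Corollary above; its contrapositive says that under the hypothesis of part (1) we automatically have $M\nmid t$. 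The only extra arithmetic input I would need is that $\mathfrak M(ab;q)=\lcm(\mathfrak M(a;q),\mathfrak M(b;q))$ whenever $(a,b)=1$, which is immediate from the Chinese Remainder Theorem.

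For part (1), assuming $\mathfrak M(M;q)\nmid d$, I would first record that $M\nmid t$, hence $(M,t)=1$ as $M$ is prime, so Butler's factorisation $M=M_1M_2$ has $M_1=M$ and $M_2=1$. Then I would apply part (3) of Proposition~\ref{Butler} to the two divisors $b=1$ and $b=M$ of $M_1$: for $b=1$ this yields a single irreducible factor of degree $\mathfrak M(t;q)=d$, and for $b=M$ it yields $\frac{d\phi(M)}{\mathfrak M(Mt;q)}=\frac{d(M-1)}{\mathfrak M(Mt;q)}$ irreducible factors of degree $\mathfrak M(Mt;q)$. To match the claimed degree I would rewrite $\mathfrak M(Mt;q)=\lcm(\mathfrak M(M;q),\mathfrak M(t;q))=\lcm(\mathfrak M(M;q),d)$ using $(M,t)=1$; the presence of a degree-$d$ factor then certifies that $f$ is an $M$-power polynomial. (A quick sanity check is that the degrees add to $d+d(M-1)=Md=\deg f(x^M)$.)

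For part (2), assuming $\mathfrak M(M;q)\mid d$, I would split on whether $M\mid t$. If $M\nmid t$, Lemma~\ref{M-is-a-power}(1) gives an irreducible factor of $f(x^M)$ of degree $d$, so $f$ is $M$-power. If $M\mid t$, Lemma~\ref{M-is-a-power}(2) with $a=1$ forces $f(x^M)$ to be a product of $M^{1-i}$ irreducible polynomials of degree $dM^i$ with $1\le i\le 1$, i.e.\ $i=1$, so $f(x^M)$ is itself irreducible of degree $Md$. Hence $f(x^M)$ is either irreducible or has a factor of degree $d$, and if it is reducible we are necessarily in the first sub-case and it is $M$-power. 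The step I expect to need the most care is precisely this last dichotomy: unlike in part (1), the hypothesis $\mathfrak M(M;q)\mid d$ does \emph{not} determine whether $M\mid t$ (the converse of the bridging implication is false), so part (2) genuinely has two sub-cases and one should not expect a uniform closed-form factorisation there — the honest conclusion is exactly the stated ``irreducible, or has a degree-$d$ factor'' alternative.
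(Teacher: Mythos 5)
Your part (1) is correct and is essentially the paper's own argument: the contrapositive of ``$M\mid t\Rightarrow M\mid q^d-1\Rightarrow\mathfrak M(M;q)\mid d$'' forces $(M,t)=1$, and Butler's theorem applied to $b=1$ and $b=M$, together with $\mathfrak M(Mt;q)=\lcm(\mathfrak M(M;q),d)$ via the Chinese Remainder Theorem, gives exactly the stated factorisation.

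Part (2), however, contains a false step in the sub-case $M\mid t$. You invoke Lemma~\ref{M-is-a-power}(2) with $a=1$ to conclude that $f(x^M)$ must then be irreducible of degree $Md$, and you go on to assert that reducibility of $f(x^M)$ pins down the sub-case $M\nmid t$. This is not true: take $q=13$, $M=2$, $f(x)=x+1$, so $d=1$, the root $-1$ has order $t=2$, hence $M\mid t$ and $\mathfrak M(2;13)=1\mid d$; yet $f(x^2)=x^2+1=(x-5)(x+5)$ over $\mathbb F_{13}$ is reducible with both factors of degree $d$. The source of the error is that you took the range ``$1\le i\le a$'' in the statement of Lemma~\ref{M-is-a-power}(2) at face value; the proof of that lemma only establishes $d\mid s$ and $s\mid r^ad$ for the common degree $s=\mathfrak M(r^at;q)$ of the irreducible factors, so $i=0$ can occur and the range should really be $0\le i\le a$. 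The paper's proof of part (2) implicitly works with this corrected version: when $M\mid t$, all irreducible factors of $f(x^M)$ have the same degree $\mathfrak M(tM;q)$, which (given $\mathfrak M(M;q)\mid d$) is either $d$, in which case $f$ is M-power, or $Md$, in which case $f(x^M)$ is irreducible; either way the stated dichotomy holds. Your final conclusion is therefore true, but the intermediate claim that $M\mid t$ forces irreducibility of $f(x^M)$ — and hence the identification of ``reducible'' with the sub-case $M\nmid t$ — does not stand, so the argument for part (2) needs to be repaired along the lines just indicated.
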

\begin{proof}
Let us write $s=\mathfrak M(M; q)$. Let us begin with the case when $s \nmid d$. We must have 
$(M, t)=1$ and thus $M_1=M, M_2=1$. For if $(M, t)\neq 1$, i.e., $M\mid t$, combined with $t\mid 
(q^d-1)$ we get $M \mid (q^d-1)$. This gives, $s\mid d$ as $t$ is smallest with this property which 
is contrary to our assumption. Thus by Proposition~\ref{Butler}, $f(x^M)$ has  factors corresponding 
to $b=1$ and $b=M$. For the case $b=1$ we get a factor of degree $d$ as in the previous Lemma. It 
also has $\frac{d\phi(M)}{\mathfrak M(Mt; q)}$ factors of degree $\mathfrak M(Mt; q)$. We claim that 
$\mathfrak M(Mt; q)=lcm(\mathfrak M(M;q), d)$. But, this is clear because 
$\left(\mathbb{Z}/Mt\mathbb{Z}\right)^{\times} \cong \left(\mathbb{Z}/M\mathbb{Z}\right)^{\times} 
\times \left(\mathbb{Z}/t\mathbb{Z}\right)^{\times}$ because $(M, t)=1$. This completes the proof of 
first part.

Now, to prove the second part we have $s \mid d$. First we take $M\nmid t$. We have $M_1=M$ and 
$M_2=1$. Thus $f(x^M)$ has factors $\frac{d}{\mathfrak M(s; q)}=1$ irreducible polynomial of degree 
$\mathfrak M(s; q)=d$ and $\frac{d(M-1)}{\mathfrak M(tM; q)}$ irreducible polynomials each of degree 
$\mathfrak M(tM; q)=lcm(\mathfrak M(M; q), d)$.
Now take the case $M\mid t$. We have $M_1=1$ and $M_2=M$. Thus $f(x^M)$ is a product of 
$\frac{Md}{\mathfrak M(tM; q)}$ irreducible polynomials each of degree $\mathfrak M(tM; q)$ which is 
either $d$ or $Md$ (from second part of Lemma~\ref{M-is-a-power}). When $\mathfrak M(tM; q)=d$ we 
have $f$ an M-power, else $f(x^M)$ is irreducible.
\end{proof}

When $M=r^a$, we set some notation and do further counting of polynomials appearing in the 
Lemma~\ref{M-is-a-power} above. Denote $\widehat{N}(q,d) = N(q,d)-N_M(q,d)$. For $1\leq i\leq a$, we 
denote the number of irreducible polynomials $f(x)$ in $\Phi$ of degree $d$ with the property that 
all irreducible factors of $f(x^M)$ are of degree $dr^i$ by $N_M^{i}(q,d)$. Thus, from 
Lemma~\ref{M-is-a-power} it follows that,
$$N(q,d) = N_M(q, d) + \widehat{N}(q,d) = \sum_{i=0}^{a} N_M^{i}(q,d)$$
where, for notational convenience, we denote $N_M(q,d)$ as $N_M^{0}(q,d)$. We have the following 
formula for $N_M^{i}(q,d)$. 
\begin{proposition}\label{nmiqd}
Let $M=r^a$ where $r$ is a prime. For natural numbers $d$ and $e$, let $\widetilde{T}(d, e)$ 
denote the number of field generators of $\mathbb{F}_{q^e}$, that has a $M^{th}$ root in the field 
$\mathbb{F}_{q^d}$. Then, for $1\leq i\leq a$ we have,
$$N_M^{i}(q,d)= \frac{1}{d}\left(|\tilde{T}(dr^i, d)|-|\tilde{T}(dr^{i-1}, d)|\right).$$
\end{proposition}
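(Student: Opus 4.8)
The plan is to recast the statement as an orbit count, exactly in the spirit of the proof of Proposition~\ref{formula-nmqd}. Recall the standard bijection between monic irreducible polynomials of degree $d$ over $\mathbb{F}_q$ and Frobenius orbits of field generators of $\mathbb{F}_{q^d}$ (elements lying in no proper subfield), every such orbit having size exactly $d$. Whether an irreducible $f$ of degree $d$ is counted by $N_M^{i}(q,d)$ — that is, whether every irreducible factor of $f(x^M)$ has degree $dr^i$ — depends only on $f$, so the set $S_i$ of field generators of $\mathbb{F}_{q^d}$ with this property is Frobenius-stable and $N_M^{i}(q,d)=\frac1d|S_i|$. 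Since $dr^{i-1}\mid dr^i$ forces $\mathbb{F}_{q^{dr^{i-1}}}\subseteq\mathbb{F}_{q^{dr^i}}$ and hence $\widetilde T(dr^{i-1},d)\subseteq\widetilde T(dr^i,d)$ (reading the $\widetilde T$'s as sets, as the absolute-value bars in the statement indicate), it suffices to prove $S_i=\widetilde T(dr^i,d)\setminus\widetilde T(dr^{i-1},d)$ for $1\le i\le a$, and then divide by the orbit size $d$.

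To identify $S_i$, fix a field generator $\alpha$ of $\mathbb{F}_{q^d}$ with minimal polynomial $f$ and exponent $t$. Because $(q,M)=1$, the polynomial $x^M-\alpha$ has $M$ distinct roots in $\overline{\mathbb{F}_q}$, all of them roots of $f(x^M)$, and the Frobenius power $\sigma\colon x\mapsto x^{q^d}$ permutes them (it fixes $\alpha$). Every $M$-th root $\gamma$ of $\alpha$ satisfies $\mathbb{F}_q(\gamma)\supseteq\mathbb{F}_q(\gamma^M)=\mathbb{F}_{q^d}$, and Frobenius acts transitively on the conjugates of $\alpha$; hence the irreducible factors of $f(x^M)$ are in bijection with the $\langle\sigma\rangle$-orbits on the set of $M$-th roots of $\alpha$ (the full Frobenius orbit of such a root meets this set in a single $\langle\sigma\rangle$-orbit and spreads evenly over the $d$ conjugates of $\alpha$), the factor attached to an orbit $O$ having degree $d\cdot|O|$. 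Now $\alpha$ has an $M$-th root in $\mathbb{F}_{q^{dr^k}}$ if and only if $\sigma^{r^k}$ fixes some $M$-th root of $\alpha$, i.e.\ some $\langle\sigma\rangle$-orbit has size dividing $r^k$. If $r\mid t$, then by Lemma~\ref{M-is-a-power}(2) (through Proposition~\ref{Butler}) all these orbits have one common size $r^{j}$, where $dr^{j}$ is the common degree of the irreducible factors of $f(x^M)$; thus $\alpha\in\widetilde T(dr^k,d)$ exactly when $k\ge j$, so $\alpha\in\widetilde T(dr^i,d)\setminus\widetilde T(dr^{i-1},d)$ precisely when $j=i$, which is exactly the condition $f\in S_i$. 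If instead $r\nmid t$, then $f$ is an $M$-power polynomial by Lemma~\ref{M-is-a-power}(1), so $f(x^M)$ has a factor of degree $d$, i.e.\ a $\langle\sigma\rangle$-orbit of size $1$; then $\alpha$ already has an $M$-th root in $\mathbb{F}_{q^d}\subseteq\mathbb{F}_{q^{dr^{i-1}}}$ for every $i\ge1$, so $\alpha\notin\widetilde T(dr^i,d)\setminus\widetilde T(dr^{i-1},d)$, while also $f\notin S_i$ since $f(x^M)$ has a factor of degree $d<dr^i$. In both cases $f\in S_i$ if and only if $\alpha\in\widetilde T(dr^i,d)\setminus\widetilde T(dr^{i-1},d)$, giving $S_i=\widetilde T(dr^i,d)\setminus\widetilde T(dr^{i-1},d)$ and hence the formula.

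I expect the main obstacle to be the bookkeeping in the middle step: establishing cleanly the bijection between irreducible factors of $f(x^M)$ and $\langle\sigma\rangle$-orbits on the $M$-th roots of $\alpha$ with the degree formula $d\cdot|O|$, and then extracting from it that the smallest $k$ with $\alpha\in\widetilde T(dr^k,d)$ is controlled purely by the common orbit size supplied by Butler's theorem — in particular ruling out that some $M$-th root of $\alpha$ lands "accidentally" in a subfield smaller than the splitting field $\mathbb{F}_{q^{dr^i}}$ of $f(x^M)$. Once this is in place, the Frobenius orbit-counting and the nesting $\widetilde T(dr^{i-1},d)\subseteq\widetilde T(dr^i,d)$ are routine and parallel the treatment of $N_M(q,d)$ in Proposition~\ref{formula-nmqd}.
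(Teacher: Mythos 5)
Your proof is correct and follows essentially the same route as the paper: identify $N_M^i(q,d)$ with a Frobenius-orbit count of field generators of $\mathbb{F}_{q^d}$ whose $M$-th roots first appear in $\mathbb{F}_{q^{dr^i}}$, use the nesting $\widetilde T(dr^{i-1},d)\subseteq\widetilde T(dr^i,d)$ to subtract, and divide by the orbit size $d$. In fact you supply more detail than the paper does at the one nontrivial point — the correspondence between irreducible factors of $f(x^M)$ and $\langle\sigma\rangle$-orbits of $M$-th roots of a fixed root $\alpha$, together with Lemma~\ref{M-is-a-power} guaranteeing a single common orbit size — which the paper simply asserts by analogy with Proposition~\ref{formula-nmqd}.
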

\begin{proof}
The proof is similar to that of Proposition~\ref{formula-nmqd}. Since $\widetilde{T}(d,e)$ 
denotes the number of field generators of $\mathbb{F}_{q^e}$, that has a $M^{th}$ root in the field 
$\mathbb{F}_{q^d}$ we have, 
$$|\tilde{T}(d,e)|= \sum_{r\mid e} \mu(r)\frac{\left(M(q^{e/r}-1), q^d-1\right)}{(M,q^d-1)}$$
where $\mu$ is the Mobius function. Comparing with the proof of Proposition~\ref{formula-nmqd}, 
we note that $\frac{1}{d}|\tilde{T}(d,d)|=N_M(q,d)$.

To compute $N_M^{i}(q,d)$, we need to find the number of field generators of $\mathbb{F}_{q^d}$ 
which possess $M^{th}$ root in the field $\mathbb{F}_{q^{dr^i}}$ but not in any smaller subfield 
between $\mathbb{F}_{q^{dr^i}}$ and $\mathbb{F}_{q^d}$. The set $\widetilde{T}(dr^{i}, d)$ gives the 
total number of field generators of $\mathbb{F}_{q^d}$, which posses $M^{th}$ root in the field 
$\mathbb{F}_{q^{dr^i}}$. Thus, to get $N_M^{i}(q,d)$ we need to subtract $|\widetilde{T}(dr^{i-1}, 
d)|$. Finally we also note that we $d$ such elements correspond to a polynomial thus we divide by 
that to get the result.
\end{proof}
We look at an example here. 
\begin{example}
Let us take $M=2^2$ and $d=1$. We have already seen $N_4^0(q, 1)=N_4(q, 1)=\frac{q-1}{(4,q-1)}$. 
Now, $N_4^1(q, 1)$ counts the number of polynomials $x-\lambda$, such that $x^4-\lambda$ factors as 
a product of two irreducible degree $2$ polynomials. Thus we have, $N_4^1(q, 1)=\left(\frac{(q-1)(4, 
q+1)}{(4, q^2-1)}-\frac{q-1}{(4, q^2-1)}\right) = \frac{q-1}{(4, q^2-1)}\left((4,q+1)-1\right)$. 
Finally, $N_4^2(q, 1)$ counts the number of polynomials $x-\lambda$ such that $x^4-\lambda$ is 
irreducible. Thus,
\begin{eqnarray*}
N_4^2(q, 1)&=& 
\frac{(4(q^2-1),q^4-1)}{(4,q^4-1)}-\frac{(4(q-1),q^4-1)}{(4,q^4-1)}-\frac{(4(q-1),q^2-1)}{(4,q^2-1)} 
\\
&=&(q-1)\left(\frac{(q+1)(4,q^2+1)}{(4,q^4-1)}-\frac{(4,q^3+q^2+q+1)}{(4,q^4-1)}-\frac{(4,q+1)}{(4,
q^2-1)}\right).
\end{eqnarray*}
\end{example}
Before moving any further, we set the following notation when $M=r^a$. For $1\leq i\leq a$, denote 
the set of all polynomials $f\in \Phi$ such that $f(x^M)$ has $r^{a-i}$ irreducible factors each of 
degree $r^i deg(f)$, by $\Phi_{M,i}$. Then by Lemma~\ref{M-is-a-power} we have $$\Phi = \Phi^M 
\bigcup \bigcup_{i=1}^a \Phi_{M, i} = \bigcup_{i=0}^a \Phi_{M, i} $$
where, for convenience, we denote $\Phi^M=\Phi_{M ,0}$. 
	
	
\subsection{$(M,q)=1$ and $M$ a prime} 

When $M$ is a prime, $N_M(q,d)$ can be obtained using $N(q,d)$ which we explore here. Observe that 
$1\leq \mathfrak{M}(M,q)\leq M-1$, and thus $(\mathfrak{M}(M,q), M)=1$. 
	
\begin{proposition}\label{reln-nmqd-nqd}
Let $M\geq 2$ be a prime and $(M, q)=1$. Suppose, $\mathfrak{M}(M,q) \mid d$. Then,
	$$\displaystyle N_M(q,d)=N(q,d)-\frac{M-1}{tM^{k+1}}N(q^{M^kt},y)$$
where $t= \mathfrak{M}(M,q)$ and $d$ is written as $d=t. M^k.y$ with $k\geq 0$ and $M\nmid y$.
\end{proposition}
\begin{proof} Let us write $t= \mathfrak{M}(M,q)$. For $M$ a prime in Proposition~\ref{formula-nmqd}, we have,
\begin{eqnarray*}
N_M(q,d) &=& \frac{1}{d(M,q^d-1)} \sum_{r\mid d} \mu(r) \left(M(q^{d/r}-1), (q^d-1) \right)\\
&=& \frac{1}{Md}\sum_{r\mid d} \mu(r)(q^{d/r}-1)\left(M,\frac{q^d-1}{q^{d/r}-1}\right)
\end{eqnarray*}
where we used $(M, q^d-1)=M$ since $t\mid d$. Therefore,
\begin{eqnarray*}
N(q,d) - N_M(q,d) & =& \left(\frac{1}{d}\sum_{r\mid 
d}\mu(r)(q^{d/r}-1)\right)-\left(\frac{1}{Md}\sum_{r\mid d} \mu(r)(q^{d/r}-1)\left(M, 
\frac{q^d-1}{q^{d/r}-1}\right)\right)\\ 
& =& \frac{1}{d}\sum_{r\mid d} 
\left(\mu(r)(q^{d/r}-1)\left[1-\frac{\left(M,\frac{q^d-1}{q^{d/r}-1}\right)}{M}\right]\right).
\end{eqnarray*}
Now, we claim that the sum above can be simplified as follows:
\begin{eqnarray*}
\sum_{r\mid d} 
\left(\mu(r)(q^{d/r}-1)\left[1-\frac{\left(M,\frac{q^d-1}{q^{d/r}-1}\right)}{M}\right]\right)=\frac{
M-1 } { M } \sum_{r\mid y}\mu(r)(q^{d/r}-1)
\end{eqnarray*}
where, $d=M^k.t.y$, with $(M,y)=1$. Using this claim, we can easily complete the proof as follows:
\begin{eqnarray*}
&& N(q,d)-N_M(q,d )= \frac{M-1}{Md}\sum_{r\mid y}\mu(r)(q^{d/r}-1)\\
&=&\left(\frac{M-1}{M^{k+1}.t}\right)\left(\frac{1}{y}\sum_{r\mid 
y}\mu(r)((q^{M^kt})^{y/r}-1)\right)= \frac{M-1}{M^{k+1}.t}N(q^{M^kt},y).
\end{eqnarray*}

Now, to prove the claim, we make several cases for $r\mid d$ depending on $d = M^kty$ where 
$(M,y)=1$. We begin with the 
following observation: $\frac{q^d-1}{q^{d/r}-1}= q^{\frac{d}{r}(r-1)} + \cdots +q^{\frac{d}{r}}+1$, 
and if $t\mid \frac{d}{r}$ then using the definition of $t$, we get $ \frac{q^d-1}{q^{d/r}-1}= 
q^{\frac{d}{r}(r-1)} + \cdots +q^{\frac{d}{r}}+1 \equiv r \imod M$.

\textbf{Case 1:} Let $r\mid y$ then $t\mid \frac{d}{r}$ and $(r, M)=1$. From the calculation 
above $\frac{q^d-1}{q^{d/r}-1} \equiv r \imod M$, and we get $\left(M,\frac{q^d-1}{q^{d/r}-1}\right) 
= 1$. Hence, the term in the claim is $1-\frac{(M,\frac{q^d-1}{q^{d/r}-1})}{M}=1-\frac{1}{M}$.

\textbf{Case 2:} Suppose now that $r\mid d$ but $r\nmid y$. In this case we show that  
$$\mu(r)(q^{d/r}-1)\left[1-\frac{(M,\frac{q^d-1}{q^{d/r}-1})}{M}\right]=0.$$
At first, let $(r,t)=1$ but $r\mid M^ky$, say $r=M^as$ where $(s,M)=1$ since $(M,y)=1$. If $a\geq 2$ we have $\mu(r)=0$ and we are done. Now, 
suppose $r=Ms$ then $t\mid \frac{d}{Ms}$ and $\frac{q^d-1}{q^{d/r}-1}= \frac{q^d-1}{q^{d/M}-1} = Ms 
\equiv 0 \imod M$. Hence, $\left(M, \frac{q^d-1}{q^{d/r}-1}\right) = M$ by the observation made 
above, and we get the required result.

Now suppose, $r\mid Mty$, where $(t,M)=(y,M)=1$ with $(r,t)\neq 1$. If $(M,\frac{q^d-1}{q^{d/r}-1})=M$ we are done once again. Now, suppose that 
$(M,\frac{q^d-1}{q^{d/r}-1})=1$. In this case it is clear that $t\mid \frac{d}{r}$. Let $(r,t)=a\neq 1$. We can also assume in this case that $M\nmid r$. Otherwise, $(M,\frac{q^d-1}{q^{d/r}-1})=M$ by the observation made before case 1. This gives a contradiction. Thus, we have $r\mid t.y$ with $(r,t)=a\neq 1$. Since $t\mid \frac{d}{r}$ we must have $a\mid y$. Thus, we must have $r\mid y$ in this case which once again gives a contradiction. This completes the proof of our claim.
\end{proof} 
\noindent We list the complete result below. 
\begin{corollary}\label{final-reln-nmqd-nqd}
Suppose $M\geq 2$ is a prime with $(M,q)=1$. Let $\mathfrak{M}(M,q)=t$. Then,
\[ N_M(q,d)=
\begin{cases}
N(q,d) & \text{when }t\nmid d\\
N(q,d)-\frac{M-1}{M^{k+1}t}N(q^{M^kt},y)  & \text{when } d=M^k.t.y\text{ for some }k\geq 0 
\text{, and M $\nmid$ y }
\end{cases}
\]
\end{corollary}
\begin{proof}
Follows from the Proposition above and Lemma~\ref{M-is-prime}.
\end{proof}

\begin{example}
Let $M=2$. Then $t=1$ is the only choice. We have,
	\begin{center}
$\displaystyle N_2(q,5)=N(q,5)-\frac{1}{2}N(q,5)=\frac{1}{10}(q^5-q)$
\end{center}
since $k=0$ in the above case. Similarly,
\begin{center}
$\displaystyle 
N_2(q,6)=N(q,6)-\frac{1}{4}N(q^2,3)=N(q,6)-\frac{1}{12}(q^6-q^2)=\frac{1}{12}(q^6-q^2-2q^3+2q).$
\end{center}
since $k=1$ in the above case, and $\displaystyle N(q,6)=\frac{1}{6}(q^6-q^2-q^3+q)$.
\end{example}

\begin{example}
Let $M=3$. The possible values of $t$ are 1 and 2. Let us first take $t=1$, that is, $q\equiv 
1(\text{mod }3)$. We calculate $N_3(q,5)$.
	$$N_3(q,5)=N(q,5)-\frac{1}{3}N(q,5)=\frac{2}{15}(q^5-q).$$
$$N_3(q,6)=N(q,6)-\frac{1}{9}N(q^3,2)=N(q,6)-\frac{1}{18}(q^6-q^2)=\frac{1}{18}
(2q^6-3q^3-2q^2+3q).$$
since $k=1$ in this case.

\noindent Let us now take $t=2$, that is, $q\equiv 2(\text{mod }3)$. In this case when $d=5$, 
$t\nmid d$. Thus,
$$N_3(q,5)=N(q,5)=\frac{1}{5}(q^5-q).$$
When $d=6$, we have $k=1$. Thus,
$$N_3(q,6)=N(q,6)-\frac{1}{18}N(q^6,1)=N(q,6)-\frac{1}{18}(q^6-1)=\frac{1}{18}
(2q^6-3q^3-3q^2+3q+1)$$
\end{example}

\section{$M^{th}$ powers in $\text{GL}(n,q)$}\label{Section-powers}
	
We consider the power map $\omega\colon \text{GL}(n,q) \rightarrow \text{GL}(n,q)$ given by 
$x\mapsto x^M$. We further assume that $(q,M)=1$. Let $\alpha\in \text{GL}(n,q)$ with combinatorial 
data $\Delta_{\alpha}$ which determines $\alpha$ up to conjugacy. We have introduced this in 
Section~\ref{Scycle-index}. Conversely, to such a data we have associated representative matrix of 
the conjugacy class which we make use of in the sequel for the further computations. 
\begin{lemma}\label{jordan-block-power}
Let $\gamma \in \mathbb F_q^*$ and $J_{\gamma,n}= \begin{psmallmatrix} \gamma & 1 & 0 & 0 
&\cdots \\ &\gamma &1 &0 &\cdots \\ &&\ddots&\ddots&\vdots \\ &&&\gamma&1\\ &&&&\gamma 
\end{psmallmatrix}$ be the Jordan matrix of size $n$. Then, $(J_{\gamma, n})^{M}$ is conjugate to 
$J_{\gamma^{M}, n}$. 
\end{lemma}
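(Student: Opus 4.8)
The plan is to write $J_{\gamma,n}=\gamma I+N$, where $N$ is the nilpotent Jordan block of size $n$ (ones on the superdiagonal), so that $N^{n}=0$ but $N^{n-1}\neq 0$, and to expand the $M$-th power by the binomial theorem, using that $\gamma I$ and $N$ commute:
$$(J_{\gamma,n})^{M}=(\gamma I+N)^{M}=\sum_{k=0}^{n-1}\binom{M}{k}\gamma^{M-k}N^{k},$$
the sum terminating at $k=n-1$ since $N^{k}=0$ for $k\geq n$. Subtracting the scalar term gives
$$(J_{\gamma,n})^{M}-\gamma^{M}I=\gamma^{M-1}\,N\,P(N),\qquad P(N):=\sum_{k=1}^{n-1}\binom{M}{k}\gamma^{-(k-1)}N^{k-1},$$
a product of $N$ with a polynomial $P(N)$ in $N$ whose constant term is $\binom{M}{1}=M$.

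The key point is that $(q,M)=1$ forces $M$ to be a unit in $\mathbb F_q$, so $P(N)=M\bigl(I+(\text{nilpotent})\bigr)$ is invertible, and it commutes with $N$. Set $B:=(J_{\gamma,n})^{M}-\gamma^{M}I=\gamma^{M-1}P(N)N$. Since $P(N)$ is a polynomial in $N$ and $N^{n}=0$, the matrix $B$ is nilpotent; and since $\gamma^{M-1}P(N)$ is invertible, $\ker B=\ker N$, so $\dim\ker B=1$, i.e.\ $\operatorname{rank}B=n-1$ (equivalently $B^{n-1}\neq0$). A nilpotent $n\times n$ matrix of rank $n-1$ is conjugate to $N$; choose $g\in\text{GL}(n,q)$ with $gBg^{-1}=N$. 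Then
$$g(J_{\gamma,n})^{M}g^{-1}=g(\gamma^{M}I+B)g^{-1}=\gamma^{M}I+N=J_{\gamma^{M},n},$$
which is the assertion. (Note $\gamma^{M}\in\mathbb F_q^{*}$ because $\gamma\in\mathbb F_q^{*}$, so $J_{\gamma^{M},n}$ is a bona fide element of $\text{GL}(n,q)$.)

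I expect the only real subtlety to be the exact role of the hypothesis $(q,M)=1$: individual binomial coefficients $\binom{M}{k}$ may well vanish in $\mathbb F_q$, but this is harmless, since all that is used is invertibility of the constant term $M$ of $P(N)$. Without coprimality the lemma genuinely fails — already for $n=2$ and $q$ a power of the prime $M$ one gets $(J_{\gamma,2})^{M}=\gamma^{M}I$, which is not conjugate to $J_{\gamma^{M},2}$ — so one should flag that the ambient assumption $(q,M)=1$ of this section is essential here. An essentially equivalent route is module-theoretic: identify $\mathbb F_q^{n}$ with $\mathbb F_q[t]/(t-\gamma)^{n}$, on which $J_{\gamma,n}$ acts as multiplication by $t$; then $(J_{\gamma,n})^{M}$ acts as multiplication by $t^{M}$, and one checks $t^{M}-\gamma^{M}\in(t-\gamma)\setminus(t-\gamma)^{2}$ precisely when $M\gamma^{M-1}\neq0$, so $t^{M}$ generates the same local algebra as $t$ and the module remains cyclic of the correct type. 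Either argument works; the matrix computation above is the shortest.
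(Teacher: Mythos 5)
Your proof is correct and follows essentially the same route as the paper: write $J_{\gamma,n}=\gamma I+N$, expand by the binomial theorem, and use $(q,M)=1$ together with $\gamma\neq 0$ to see that $(J_{\gamma,n})^{M}-\gamma^{M}I$ is nilpotent of rank $n-1$, hence conjugate to $N$. You merely make explicit the final conjugacy step that the paper compresses into ``the result follows,'' and your observation that the section-wide hypothesis $(q,M)=1$ is essential is accurate.
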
 
\begin{proof}
We write $J_{\gamma, n}= \gamma I_{n} + N$ and notice that $N$ is a nilpotent matrix satisfying 
$N^n=0$ and $N^{k}\neq 0$ for all $k<n$. Thus, 
$$(J_{\gamma, n})^M= (\gamma I_n + N)^M= \gamma^M I_{n} + \binom{M}{1} \gamma^{M-1}I_{n}N + 
\cdots + N^{M}.$$ 
Hence, $(J_{\gamma, n})^M$ has all diagonal entries $\gamma^M$, and all entries above the 
diagonal $M\gamma^{M-1}$. Since $(q,M)=1$, the result follows.
\end{proof}
Let $f\in \Phi$ be a polynomial of degree $k\geq 1$. Then, $f$ splits over $\mathbb F_{q^k}$. The 
Galois automorphisms of this field is obtained by taking powers of the Frobenius automorphism 
denoted as $\sigma_k$. Let $f_M\in \mathbb F_q[x]$ be the minimal polynomial of $M^{th}$ power of 
one of the roots of $f$. If $\eta$ is a root of $f$ then other roots of $f$ are $\sigma_k^i(\eta)$ 
for $0\leq i \leq k-1$, and $f_M$ is the minimal polynomial of $\eta^k$. Note that $f_M$ is uniquely 
associated to $f$, say it is of degree $d$. Then, $d\mid k$ and $\mathbb F_{q^d}$ is the splitting 
field of $f_M$ which is a subfield of $\mathbb F_{q^k}$. We have the following, 
\begin{lemma}\label{power-poly}
Let $f\in \Phi$ be of degree $k\geq 1$, and $f_M$ be minimal polynomial of $M^{th}$ power of a 
root of $f$ of degree $d$. Then, exactly $\frac{k}{d}$ roots of $f(x)$ raised to the power $M$ give 
a root of $f_M(x)$. Further, $f(x)$ is an irreducible factor of the polynomial $f_M(x^M)$.
\end{lemma}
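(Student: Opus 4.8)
The plan is to argue entirely in terms of roots and their Galois orbits over $\mathbb F_q$. Let $\eta$ be a root of $f$, so that $\mathbb F_q[\eta]\cong\mathbb F_{q^k}$ and the roots of $f$ are exactly $\eta,\sigma_k(\eta),\dots,\sigma_k^{k-1}(\eta)$, all distinct. By definition $f_M$ is the minimal polynomial of $\eta^M$; let $d$ be its degree, so $\mathbb F_q[\eta^M]\cong\mathbb F_{q^d}$ and $d\mid k$, with $\mathbb F_{q^d}\subseteq\mathbb F_{q^k}$. The roots of $f_M$ are then $\eta^M,\sigma_k(\eta)^M,\dots$, which as a set is the $\langle\sigma_k\rangle$-orbit of $\eta^M$; since the stabiliser of $\eta^M$ in $\langle\sigma_k\rangle\cong\mathbb Z/k$ is the unique subgroup of index $d$, namely $\langle\sigma_k^d\rangle$, the orbit of $\eta^M$ has size $d$, i.e. $\{\,\sigma_k^i(\eta)^M : 0\le i\le k-1\,\}$ has exactly $d$ elements, each hit $k/d$ times.

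First I would make the counting statement precise: a root $\sigma_k^i(\eta)$ of $f$, raised to the $M$-th power, equals $\sigma_k^i(\eta^M)$ (the Frobenius commutes with the $M$-th power map), and this lies among the roots of $f_M$ for every $i$; and by the orbit count just described, the fibre over each root of $f_M$ under the map $\zeta\mapsto\zeta^M$ restricted to the roots of $f$ has size $k/d$. This gives the first assertion. For the second assertion, observe that $f(x^M)$ vanishes at a complex/algebraic number $\beta$ exactly when $\beta^M$ is a root of $f$; in particular, taking any root $\eta_0$ of $f$ with $\eta_0 = \beta^M$ makes $\beta$ a root of $f(x^M)$. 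We want to exhibit $f(x)$ itself — not $f(x^M)$ having a root, but the polynomial $f$ — as a factor of $f_M(x^M)$. For that, note that every root $\sigma_k^i(\eta)$ of $f$ satisfies $f_M\big((\sigma_k^i(\eta))^M\big) = f_M(\sigma_k^i(\eta^M)) = 0$, since $\sigma_k^i(\eta^M)$ is a root of $f_M$; hence each root of $f$ is a root of $f_M(x^M)$. Because $f$ is irreducible (and separable, being irreducible over a finite field) and monic, and all its roots are roots of $f_M(x^M)$, it follows that $f(x)\mid f_M(x^M)$ in $\mathbb F_q[x]$.

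The main things to get right are the orbit-stabiliser bookkeeping — that the $\langle\sigma_k\rangle$-stabiliser of $\eta^M$ is precisely $\langle\sigma_k^d\rangle$ because $d=[\mathbb F_q[\eta^M]:\mathbb F_q]$ and $\gal(\mathbb F_{q^k}/\mathbb F_{q^d})$ is the unique index-$d$ subgroup of the cyclic group $\gal(\mathbb F_{q^k}/\mathbb F_q)$ — and the commutation $\sigma_k^i(\eta)^M=\sigma_k^i(\eta^M)$, which is immediate since $\sigma_k$ is a field automorphism. I do not anticipate a genuine obstacle; the only subtlety is to phrase "$f(x)$ is an irreducible factor of $f_M(x^M)$" correctly: it is the divisibility $f\mid f_M(x^M)$ in $\mathbb F_q[x]$, which follows from $f$ being the (monic, irreducible) minimal polynomial of each of its roots together with the fact that each such root is annihilated by $f_M(x^M)$. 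One should also note in passing that $f_M(x^M)$ has degree $Md$ while $f$ has degree $k\le Md$, so the divisibility is consistent with degrees, and the quotient $f_M(x^M)/f(x)$ accounts for the remaining roots $\beta$ of $f_M(x^M)$ with $\beta^M$ a root of $f_M$ but $\beta$ not a root of $f$.
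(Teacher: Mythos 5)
Your proposal is correct and follows essentially the same route as the paper: both identify the roots of $f$ with the Frobenius orbit of $\eta$, observe that the $M$-th power map sends this orbit onto the Frobenius orbit of $\zeta=\eta^M$ (the roots of $f_M$) with fibres of size $k/d$, and deduce $f\mid f_M(x^M)$ from the fact that $\eta$ is annihilated by $f_M(x^M)$ together with minimality of $f$. Your explicit orbit--stabiliser bookkeeping via $\gal(\mathbb F_{q^k}/\mathbb F_{q^d})=\langle\sigma_k^d\rangle$ actually spells out the counting step that the paper leaves implicit.
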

\begin{proof}
Let $\eta\in \mathbb F_{q^k}$ be a root of $f(x)$, and $\eta^M=\zeta \in \mathbb F_{q^d}$  with 
minimal polynomial $f_M(x)$. Then, the set of roots of $f$ is $\mathcal S= \{\sigma_k^i(\eta)\mid 
0\leq i \leq k-1\}\subset \mathbb F_{q^k}$ and under the $M^{th}$ power map this goes inside the set 
$\tilde{ \mathcal S}= \{\sigma_d^i(\zeta)\mid 0\leq i \leq d-1\}\subset \mathbb F_{q^d}$ which are 
roots of $f_M$. Thus the first statement follows. 

We note that $\eta$ is a root of $f_M(x^M)$ as $(x^M-\zeta)=(x^M-\eta^M)$ is a factor. Thus, the 
minimal polynomial of $\eta$, which is $f$, divides $f_M(x^M)$. 
\end{proof}
\noindent Now, we consider slightly more general case of $\alpha \in \text{GL}(n,q)$ with 
combinatorial data $\Delta_{\alpha}$ and determine the data $\Delta_{\alpha^M}$ for $\alpha^M$. 
\begin{proposition}\label{M-power-partition-split} 
Suppose $\alpha\in \text{GL}(n,q)$ with $\Delta_{\alpha}$ consisting of a single irreducible 
polynomial $f$ of degree $k$ and $\lambda_{f}=(\lambda_1, \ldots ,\lambda_l)$ where 
$|\lambda_{f}|=\frac{n}{k}$. Let $f_M$ be the minimal polynomial of a $M^{th}$ power of a root of 
$f$, say of degree $d$. Then, $\Delta_{\alpha^M}$ consists of a single polynomial $f_M$ and 
$|\lambda_{f_M}|=\frac{n}{d}$ with 
$$ \lambda_{f_M}= (\underbrace{\lambda_1, \ldots , \lambda_1}_s , \ldots, \underbrace{\lambda_l, 
\ldots, \lambda_l}_s)$$
where $s=\frac{k}{d}$.
\end{proposition}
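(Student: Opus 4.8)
The plan is to reduce the statement to the case of a single generalized Jordan block and then pass to the splitting field of $f$. First I would take the standard class representative $\diag(J_{f,\lambda_1},\dots,J_{f,\lambda_l})$ of $\alpha$, so that $\alpha^M$ is conjugate to $\diag\bigl(J_{f,\lambda_1}^M,\dots,J_{f,\lambda_l}^M\bigr)$. Because the combinatorial data of a block-diagonal matrix is obtained by amalgamating the data of its blocks (this is just the fact that $\mathbb F_q[x]$-module structures add up under direct sum), it suffices to show: for each integer $m\ge 1$, the matrix $J_{f,m}^M$ is conjugate over $\mathbb F_q$ to $\bigoplus_{t=1}^{s}J_{f_M,m}$, where $s=k/d$; equivalently, $\Delta_{(J_{f,m})^M}$ consists of the single polynomial $f_M$ with partition $(m)$ taken $s$ times. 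Granting this single-block assertion, one reassembles over $j=1,\dots,l$ to obtain the single polynomial $f_M$ with partition the (sorted) concatenation of the $(\lambda_j)$, each repeated $s$ times, which is exactly the claimed $\lambda_{f_M}$, and the size check $|\lambda_{f_M}|=s\cdot\frac nk=\frac nd$ is immediate.

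For the single-block claim I would extend scalars to $L=\mathbb F_{q^k}$, the splitting field of the irreducible polynomial $f$ of degree $k$. Writing $f(x)=\prod_{i=0}^{k-1}\bigl(x-\sigma_k^i(\eta)\bigr)$ with $\eta$ a root and the linear factors pairwise coprime (finite fields are perfect, so $f$ is separable), the Chinese Remainder Theorem applied to $L\otimes_{\mathbb F_q}\mathbb F_q[x]/(f^m)\cong L[x]/(f^m)\cong\prod_{i}L[x]/\bigl((x-\sigma_k^i(\eta))^m\bigr)$ shows that over $L$ the block $J_{f,m}$ is conjugate to $\diag\bigl(J_{\sigma_k^i(\eta),m}:0\le i\le k-1\bigr)$. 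Now I would invoke Lemma~\ref{jordan-block-power} over $L$ --- its proof only uses that the characteristic does not divide $M$, which holds since $(q,M)=1$ --- to get that $J_{\sigma_k^i(\eta),m}^M$ is conjugate over $L$ to $J_{(\sigma_k^i(\eta))^M,m}=J_{\sigma_k^i(\eta^M),m}$, using that $\sigma_k$ is a field automorphism. Setting $\zeta=\eta^M$, which by hypothesis has minimal polynomial $f_M$ of degree $d$ over $\mathbb F_q$, the elements $\sigma_k^i(\zeta)$, $0\le i\le k-1$, run over the $d$ roots of $f_M$ --- here $\sigma_d$ is the restriction of $\sigma_k$ to $\mathbb F_{q^d}\ni\zeta$, as in Lemma~\ref{power-poly} --- and $\sigma_k^i(\zeta)=\sigma_k^{i'}(\zeta)$ precisely when $i\equiv i'\pmod d$, so each root of $f_M$ is attained exactly $s=k/d$ times. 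Applying the Chinese Remainder Theorem once more, this time to $f_M$, identifies $\bigoplus_{j=0}^{d-1}\bigl(J_{\sigma_d^j(\zeta),m}\bigr)^{\oplus s}$ with $\bigl(J_{f_M,m}\bigr)^{\oplus s}$ over $L$; hence $J_{f,m}^M$ is conjugate over $L$ to $\bigoplus_{t=1}^{s}J_{f_M,m}$. Finally, both of these are matrices over $\mathbb F_q$ that are conjugate over the extension $L$, and since rational canonical form is insensitive to field extension they are conjugate already over $\mathbb F_q$; this completes the single-block case.

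I expect the main obstacle to be the bookkeeping in the middle step: keeping precise track, after raising each conjugate $\sigma_k^i(\eta)$ to the $M$th power, of which roots of $f_M$ appear and with what multiplicity $s$ --- this is the point where Lemma~\ref{power-poly} is genuinely used --- together with cleanly invoking the two scalar-extension facts (the splitting of one generalized Jordan block over $L$ into ordinary ones, and the descent of conjugacy from $L$ back to $\mathbb F_q$). Neither scalar-extension input is deep, but each should be stated explicitly rather than left implicit. One could alternatively avoid the extension of scalars by working directly with $\mathbb F_q[x]/(f^m)$ as a module over the subring $\mathbb F_q[x^M]$, but the Galois-theoretic route above makes the counting of the multiplicity $s$ the most transparent.
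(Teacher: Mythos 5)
Your proposal is correct and follows essentially the same route as the paper: reduce to a single block $J_{f,\lambda_i}$, split it over $\mathbb F_{q^k}$ into ordinary Jordan blocks $J_{\sigma_k^i(\eta),\lambda_i}$, apply Lemma~\ref{jordan-block-power} to each, group the Galois conjugates of $\zeta=\eta^M$ into $s=k/d$ copies of $J_{f_M,\lambda_i}$ via Lemma~\ref{power-poly}, and recombine. The only difference is that you make explicit the two scalar-extension facts (CRT splitting and descent of conjugacy from $\mathbb F_{q^k}$ back to $\mathbb F_q$), which the paper leaves implicit.
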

\begin{proof}
Recall from Section~\ref{cycle-index}, the associated representative of conjugacy class 
corresponding to $\alpha$ is the matrix $A_{\alpha}=\diag(J_{f, \lambda_{1}}, \ldots, J_{f, 
\lambda_{l}})$ where $J_{f, \lambda_{i}}$ is a block matrix of size $\lambda_i.k$ with each block 
size $k$ and diagonals $C(f)$. Thus to compute $A_{\alpha}^M$  we first look at a single block 
$J_{f, \lambda_{i}}$. Note that $J_{f, \lambda_{i}} = D+N$ where $D=\diag(C(f),\ldots, C(f))$ and 
$N=\begin{psmallmatrix} 0& I &&&&\\ &0 &I && &\\ &&&\ddots&\ddots& \\ &&&&0&I\\ &&&&&0 
\end{psmallmatrix}$ and $DN=ND$. Thus,
$$J_{f, \lambda_{i}}^M = (D+N)^M = D^M + MD^{M-1}N + \cdots $$
where $D^M= \diag(C(f)^M, \ldots, C(f)^M) $. Since $(q,M)=1$, the data $\Delta_{\alpha^M}$ will 
depend only on how $C(f)^M$ splits up.
Over $\mathbb F_{q^k}$, the companion matrix $C(f)$ is conjugate to the diagonal matrix 
$\diag(\eta, \sigma(\eta), \ldots, \sigma^{k-1}(\eta))$ where $Gal(\mathbb F_{q^k}/\mathbb 
F_q)=<\sigma>$. Thus, $J_{f, \lambda_{i}}$ is conjugate to the matrix $\diag(J_{\eta, \lambda_i}, 
J_{\sigma(\eta), \lambda_i}, \ldots, J_{\sigma^{k-1}(\eta), \lambda_i})$. Now, using 
Lemma~\ref{jordan-block-power}, we get $J_{f, \lambda_{i}}^M$ is conjugate to $\diag(J_{\eta^M, 
\lambda_i}, J_{\sigma(\eta)^M, \lambda_i}, \ldots, J_{\sigma^{k-1}(\eta)^M, \lambda_i})$ over 
$\mathbb F_{q^k}$.   
Thus, by grouping together the blocks where the conjugates of $\zeta=\eta^M$ appear, we get 
$J_{f, \lambda_{i}}^M$ is conjugate to 
$$\diag\left( \underbrace{J_{\zeta, \lambda_i}, J_{\sigma_d(\zeta), \lambda_i}, \ldots, 
J_{\sigma_d^{d-1}(\zeta), \lambda_i}}_{1}, \ldots, \underbrace{J_{\zeta, \lambda_i}, 
J_{\sigma_d(\zeta), \lambda_i}, \ldots, J_{\sigma_d^{d-1}(\zeta), \lambda_i}}_{s} \right)$$
with $s$ many grouped blocks, because of Lemma~\ref{power-poly}. Further, notice that 
$$\diag(J_{\zeta, \lambda_i}, J_{\sigma_d(\zeta), \lambda_i}, \ldots, J_{\sigma_d^{d-1}(\zeta), 
\lambda_i})$$ where $Gal(\mathbb F_{q^d}/\mathbb F_q)=<\sigma_d>$, is conjugate to $J_{f_M, 
\lambda_i}$; which is a block matrix of size $\lambda_i.d$ with block size $d$.  Thus, $J_{f, 
\lambda_{i}}^M$ is conjugate to $\diag(\underbrace{J_{f_M,\lambda_i}, \ldots, 
J_{f_M,\lambda_i}}_s)$. This gives us the required result.
\end{proof}
\noindent In this proposition, the partition $\lambda_{f_M}$ can be easily visualized in the power 
notation of partitions where multiplicity of each part gets multiplied by $s$. 
We can generalise the above result to more general set up where $\Delta_\alpha$ has more than one 
polynomials but the minimal polynomial of $M^{th}$ power of a root of each one of them is a single 
polynomial. 
\begin{proposition}\label{M-power-single-poly}
Suppose $\alpha\in \text{GL}(n,q)$ with associated data $\Delta_{\alpha}$ consisting of 
polynomials $f_i \in \Phi $ of degree $d_i$ and partitions $\lambda_{f_i}=(\lambda_{i_1}, 
\lambda_{i_2}, \ldots)$, $1\leq i \leq l$. Let $h(x)$ be a polynomial of degree $d$ which is the 
minimal polynomial of $M^{th}$ power of a root of each $f_i$ for all $i$ (that is, $(f_i)_M=h, 
\forall i$). Then, $\Delta_{\alpha^M}$ consists of the single polynomial $h(x)$ and partition
$$\lambda_{h(x)}=\left(\underbrace{\lambda_{1_1}^{\frac{d_1}{d}}, \lambda_{1_2}^{\frac{d_1}{d}}, 
\ldots}, \ldots, \underbrace{\lambda_{i_1}^{\frac{d_i}{d}}, \lambda_{i_2}^{\frac{d_i}{d}}, \ldots}, 
\ldots, \underbrace{\lambda_{l_1}^{\frac{d_l}{d}}, \lambda_{l_2}^{\frac{d_l}{d}}, \ldots} \right)$$	
with $|\lambda_{h(x)}|=\frac{n}{d}$. 
\end{proposition}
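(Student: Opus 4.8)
The plan is to reduce everything to Proposition~\ref{M-power-partition-split} by a block-by-block argument. Write the canonical representative of the conjugacy class of $\alpha$ as $A_\alpha = \diag(A_1,\ldots,A_l)$, where $A_i$ is the block-diagonal matrix $\diag(J_{f_i,\lambda_{i_1}}, J_{f_i,\lambda_{i_2}},\ldots)$ built from the single pair $(f_i,\lambda_{f_i})$; thus $A_i$ represents a conjugacy class in $\text{GL}(d_i|\lambda_{f_i}|,q)$ whose combinatorial data consists of the single polynomial $f_i$ with partition $\lambda_{f_i}$. Since these are diagonal blocks, $A_\alpha^M = \diag(A_1^M,\ldots,A_l^M)$, and — recalling from Section~\ref{Scycle-index} that conjugacy classes of $\text{GL}(n,q)$ correspond to $\mathbb F_q[x]$-module structures on $\mathbb F_q^n$, and that a direct sum of modules has, for each irreducible $g$, the concatenation (multiplicities added) of the partitions of the summands as its $g$-part — the combinatorial data of $\alpha^M$ is obtained by superimposing the data of the $A_i^M$.

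Next I would apply Proposition~\ref{M-power-partition-split} to each $A_i$ separately. By hypothesis $(f_i)_M = h$ for every $i$, with $d\mid d_i$ by Lemma~\ref{power-poly}; so that proposition shows $A_i^M$ has combinatorial data consisting of the single polynomial $h$ and partition (in power notation)
$$\lambda_h(A_i^M) = \bigl(\lambda_{i_1}^{s_i}, \lambda_{i_2}^{s_i}, \ldots\bigr), \qquad s_i := \tfrac{d_i}{d},$$
with $|\lambda_h(A_i^M)| = d_i|\lambda_{f_i}|/d$. In particular $h$ is the only polynomial occurring in the data of any $A_i^M$.

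Combining the two observations, the data of $\alpha^M$ consists of the single polynomial $h$ with partition the concatenation of the $\lambda_h(A_i^M)$, which is exactly the asserted formula for $\lambda_{h(x)}$. For the size, summing gives
$$|\lambda_{h(x)}| = \sum_{i=1}^l |\lambda_h(A_i^M)| = \sum_{i=1}^l \frac{d_i|\lambda_{f_i}|}{d} = \frac{1}{d}\sum_{i=1}^l d_i|\lambda_{f_i}| = \frac{n}{d},$$
using $\sum_i d_i|\lambda_{f_i}| = n$ from the definition of $\Delta_\alpha$.

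The only step that deserves to be written out carefully is the superimposition claim in the first paragraph: that passing to $M$-th powers commutes with the block decomposition and that the combinatorial data of a block-diagonal matrix is the union of the data of its blocks. The first half is immediate because the blocks are diagonal, and the second half is the module-theoretic remark recalled in Section~\ref{Scycle-index}. Beyond that, the argument is a direct appeal to Proposition~\ref{M-power-partition-split}, so no genuinely new difficulty arises — the main work was already done there.
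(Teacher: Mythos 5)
Your proposal is correct and follows the same route the paper intends: the paper's proof of Proposition~\ref{M-power-single-poly} is just the one-line remark that it ``follows from the earlier proposition,'' i.e.\ from Proposition~\ref{M-power-partition-split} applied blockwise exactly as you do. Your write-up merely supplies the details the paper leaves implicit (that $M$-th powers respect the block-diagonal decomposition and that the combinatorial data of a direct sum is the superimposition of the data of the summands), and these are correct.
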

\noindent The proof of this follows from the earlier proposition. A more general version of this 
Proposition can be written where, we have $h_1(x), \ldots, h_m(x)$ which are the minimal polynomials 
of $M^{th}$ powers of a subset of $f_i$'s. We also note that in this proposition the partition 
obtained need not be ordered. However, there is no loss here if we make it ordered.

Now, we apply the results obtained so far to get certain classes which are $M^{th}$ power. 
\begin{proposition}\label{M-power-determined}
Let $\alpha \in \text{GL}(n,q)$ with combinatorial data $\Delta_{\alpha}$. Suppose, each 
partition $\lambda_{f_i}$ in $\Delta_{\alpha}$ has all its parts distinct. Then, $X^M=\alpha$ has a 
solution in $\text{GL}(n,q)$ if and only if $f_i$ is M-power for all $i$ (that is, $f_i(x^M)$ has an 
irreducible factor of degree $deg(f_i)$ for all $i$).
\end{proposition}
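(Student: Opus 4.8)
The plan is to reduce to the case of a single polynomial by splitting the module. Since the combinatorial data $\Delta_\alpha$ decomposes $\mathbb F_q^n$ as a direct sum of $\mathbb F_q[x]$-submodules, one for each $f_i$, and since $X$ commutes with $\alpha = X^M$ (hence preserves the primary decomposition of $\mathbb F_q^n$ with respect to $\alpha$, as these are determined by the $\alpha$-action alone once we know $X$ centralizes $\alpha$ — conversely any solution can be built blockwise), it suffices to treat the case $l=1$, i.e. $\Delta_\alpha$ consists of a single polynomial $f$ of degree $k$ with partition $\lambda_f = (\lambda_1, \ldots, \lambda_m)$ having all parts distinct. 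I should be slightly careful about the direction: for the "if" direction I will construct $X$ blockwise, and for the "only if" direction I will argue that any $X$ with $X^M = \alpha$ has its own combinatorial data $\Delta_X$, and apply Proposition~\ref{M-power-single-poly} (or its stated more general version with several $h_j$'s) to read off $\Delta_{X^M} = \Delta_\alpha$.

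For the "only if" direction: suppose $X^M = \alpha$ and let $\Delta_X$ have polynomials $g_1, \ldots, g_p$ with partitions $\mu_{g_j}$. By the general version of Proposition~\ref{M-power-single-poly}, the polynomials appearing in $\Delta_{X^M}$ are exactly the distinct minimal polynomials $(g_j)_M$, and the partition attached to each such $h = (g_j)_M$ is obtained by concatenating, over all $j$ with $(g_j)_M = h$, the partition $\mu_{g_j}$ with each part's multiplicity scaled by $\deg(g_j)/\deg(h)$. Since $\Delta_\alpha$ has the single polynomial $f$, every $(g_j)_M$ equals $f$; in particular each $g_j$ divides $f(x^M)$ by Lemma~\ref{power-poly}, so $g_j$ is an irreducible factor of $f(x^M)$. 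The key point is the distinctness hypothesis on $\lambda_f$: the concatenated partition with scaled multiplicities must equal $\lambda_f = (\lambda_1, \ldots, \lambda_m)$, and since the parts $\lambda_1, \ldots, \lambda_m$ are pairwise distinct, every multiplicity in $\lambda_f$ is $1$. This forces each scaling factor $\deg(g_j)/\deg(f)$ to be $1$, i.e. $\deg(g_j) = \deg(f) = k$; but $g_j$ is an irreducible factor of $f(x^M)$ of degree $\deg(f)$, which is exactly the statement that $f$ is an $M$-power polynomial.

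For the "if" direction: assume $f$ is $M$-power, so $f(x^M)$ has an irreducible factor $g$ of degree $k$. By Lemma~\ref{power-poly} (applied with the roles arranged so that $f = g_M$, using that $g \mid f(x^M)$ forces the minimal polynomial of the $M$-th power of a root of $g$ to be $f$ — one checks this via the degree count $\frac{k}{d} = 1$), raising a root of $g$ to the $M$-th power yields a root of $f$. Now for each part $\lambda_i$ of $\lambda_f$, take the matrix $J_{g, \lambda_i}$; Proposition~\ref{M-power-partition-split} (with $s = k/k = 1$) shows $(J_{g,\lambda_i})^M$ is conjugate to $J_{f, \lambda_i}$. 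Setting $X = \diag(J_{g,\lambda_1}, \ldots, J_{g,\lambda_m})$ (after conjugating each block so the powers match up exactly) gives $X^M$ conjugate to $\diag(J_{f,\lambda_1}, \ldots, J_{f,\lambda_m})$, which is the canonical representative of $\alpha$. Hence $X^M$ is conjugate to $\alpha$, and replacing $X$ by a suitable conjugate gives $X^M = \alpha$.

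The main obstacle I anticipate is bookkeeping in the "only if" direction: ensuring that the decomposition argument genuinely reduces to a single polynomial (i.e. that a global solution $X$ restricts to solutions on each primary component of $\alpha$), and then carefully tracking how the scaled-multiplicity concatenation in Proposition~\ref{M-power-single-poly} interacts with the distinctness of parts. The distinctness hypothesis is exactly what rigidifies the situation — without it, a part could arise with multiplicity $> 1$ from a $g_j$ of larger degree, and $f$ would not need to be $M$-power. Making this rigidity precise is the crux; everything else is an application of the structural propositions already proved.
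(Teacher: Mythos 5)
Your proposal is correct and follows essentially the same route as the paper: reduce to the single-polynomial case, use Proposition~\ref{M-power-single-poly} together with the distinctness of the parts to force $\deg(g_j)=\deg(f)$ in the ``only if'' direction, and use Proposition~\ref{M-power-partition-split} with $s=1$ to build a block-diagonal solution in the ``if'' direction. The only difference is that you spell out the reduction to one primary component slightly more explicitly than the paper does.
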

\begin{proof}
It suffices to prove this for a single polynomial $i=1$ case. Thus we may assume, 
$\alpha \in \text{GL}(n,q)$ with $\Delta_{\alpha}$ consisting of a single polynomial $h(x) \in 
\Phi$ of degree $d$ and partition $\lambda_{h(x)}$ of $\frac{n}{d}$ has all of its parts distinct. 
Now, we need to prove $X^M=\alpha$ has a solution in $\text{GL}(n,q)$ if and only if the polynomial 
$h(x^M)$ has an irreducible factor of degree $d$. 

First, let us assume that there exists an $A\in \text{GL}(n,q)$ such that $A^M=\alpha$.  
Suppose, the combinatorial data $\Delta_A$ consists of polynomials $f_i$ of degree $d_i$ and 
partitions $\lambda_{f_i}$. Then, $M^{th}$ power of roots of $f_i$, for all $i$, are roots of 
$h(x)$, i.e., $(f_i)_M=h(x)$ for all $i$.  Therefore, by Proposition~\ref{M-power-single-poly}, the 
associated partition $\lambda_{h(x)}$ will have each $\lambda_{i_j}$ repeating $\frac{d_i}{d}$ many 
times. But, we are given that parts of $\lambda_{h(x)}$ are all distinct. Hence, $d_i=d$ for all 
$i$. Thus, by fixing $f$ as one of the $f_i$ and by using Lemma~\ref{power-poly}, we have $s=1$ and 
$h(x^M)$ has an irreducible factor of degree $d$, as required.

Now for converse, since $h(x^M)$ has an irreducible factor of degree $d$, call it $g(x)$. Then 
$M^{th}$ power of each root of $g(x)$ is a root of $h(x)$. Now, take $A$ to be the standard 
representative of the conjugacy class with combinatorial data $\Delta_A$ consisting of the 
polynomial $g(x)$ with $\lambda_{g(x)}=\lambda_{h(x)}$. From 
Proposition~\ref{M-power-partition-split}, we see that $\Delta_{A^M} = \Delta_{\alpha}$. This proves 
the required result.
\end{proof}

To obtain neat results for arbitrary $\alpha$ in $\text{GL}(n,q)$ we put some restrictions on $M$ 
(for example a prime power). Recall (last para of Section~\ref{M-power-polynomials}) that for $1\leq 
i\leq a$, we denote the set of all polynomials $f\in \Phi$ such that $f(x^M)$ has $r^{a-i}$ 
irreducible factors each of degree $r^i deg(f)$, by $\Phi_{M,i}$. Also, for convenience we denote 
the set of M-power polynomials $\Phi^M=\Phi_{M ,0}$ and we have, $\Phi=\displaystyle \bigcup_{b=0}^a 
\Phi_{M,b}$.
\begin{proposition}\label{prime-power-case}
Let $M=r^a$ where $r$ is a prime. Let $\alpha \in \text{GL}(n,q)$ with combinatorial data 
$\Delta_{\alpha} $ consisting of polynomials $f_i \in \Phi $ of degree $d_i$ and partitions 
$\lambda_{f_i}=1^{m_1(\lambda_{f_i})} \ldots j^{m_j(\lambda_{f_i})} \ldots$ written in power 
notation, $1\leq i \leq l$. Then, $X^M=\alpha$ has a solution in $\text{GL}(n,q)$ if and only if for 
each $1\leq i\leq l$, one of the following holds:
\begin{enumerate}
\item $f_i \in \Phi^M$.
\item $f_{i}\in \Phi_{M,b}$ for some $b$, $1\leq b \leq a$ and $r^b \mid m_j(\lambda_{f_i})$ 
for all $j$.
\end{enumerate}
\end{proposition}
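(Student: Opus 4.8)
The plan is to reduce to the case where $\Delta_\alpha$ consists of a single polynomial, and then exploit the decomposition $\Phi=\bigcup_{b=0}^{a}\Phi_{M,b}$ furnished by Lemma~\ref{M-is-a-power}, recalling that $\Phi_{M,0}=\Phi^M$ and that for $b\ge 1$ the set $\Phi_{M,b}$ consists of those $f$ for which \emph{every} irreducible factor of $f(x^M)$ has degree $r^b\deg(f)$ (these sets are pairwise disjoint, since a polynomial in $\Phi_{M,0}$ contributes a factor of $f(x^M)$ of degree $\deg f$ while the others never do, and distinct $b$'s force distinct factor-degrees). For the reduction, write $\mathbb F_q^n=\bigoplus_i V_{f_i}$ for the decomposition of the $\mathbb F_q[x]$-module $\mathbb F_q^n$ into $\alpha$-primary components, so that $\alpha|_{V_{f_i}}$ has combinatorial data $(f_i,\lambda_{f_i})$. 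If $X^M=\alpha$ then $X$ commutes with $\alpha$, hence with each $f_i(\alpha)^n$, so it preserves $V_{f_i}=\ker f_i(\alpha)^n$, and its (necessarily invertible) restriction is an $M$th root of $\alpha|_{V_{f_i}}$; conversely $M$th roots of the blocks assemble into a block-diagonal $M$th root of $\alpha$. Thus it suffices to treat $\alpha$ with $\Delta_\alpha=\{(h,\lambda_h)\}$, $\deg h=d$, $\lambda_h=1^{m_1}2^{m_2}\cdots$, and to show that $X^M=\alpha$ is solvable in $\text{GL}(n,q)$ if and only if $h\in\Phi^M$, or $h\in\Phi_{M,b}$ for some $1\le b\le a$ with $r^b\mid m_j$ for all $j$.

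For necessity, suppose $A^M=\alpha$ and let $\Delta_A$ consist of distinct $g_1,\dots,g_m$ with partitions $\lambda_{g_t}$. Since $\Delta_{A^M}=\Delta_\alpha$ involves only $h$, each $g_t$ must satisfy $(g_t)_M=h$ (otherwise $(g_t)_M$ would appear as a further polynomial in $\Delta_{A^M}$), so by Lemma~\ref{power-poly} every $g_t$ is an irreducible factor of $h(x^M)$, and Proposition~\ref{M-power-single-poly} applies, giving $m_j(\lambda_h)=\sum_t(\deg g_t/d)\,m_j(\lambda_{g_t})$. If $h\notin\Phi^M$, then $h\in\Phi_{M,b}$ for a unique $b\in\{1,\dots,a\}$, so every irreducible factor of $h(x^M)$, in particular every $g_t$, has degree $r^bd$; hence $m_j(\lambda_h)=r^b\sum_t m_j(\lambda_{g_t})$ is divisible by $r^b$ for all $j$, which is condition (2). (If $h\in\Phi^M$ there is nothing to prove.)

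For sufficiency, if $h\in\Phi^M$ pick an irreducible factor $g$ of $h(x^M)$ with $\deg g=d$; then $(g)_M=h$, because the $M$th power of a root of $g$ is a root of the irreducible polynomial $h$. Taking $A$ to be the standard representative with data $(g,\lambda_h)$, Proposition~\ref{M-power-partition-split} with $s=\deg g/d=1$ gives $\Delta_{A^M}=(h,\lambda_h)=\Delta_\alpha$. If instead $h\in\Phi_{M,b}$ with $r^b\mid m_j$ for all $j$, pick an irreducible factor $g$ of $h(x^M)$ of degree $r^bd$ (one exists by the definition of $\Phi_{M,b}$), so again $(g)_M=h$; let $\mu$ be the partition with $m_j(\mu)=m_j/r^b$. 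Since $|\lambda_h|=\sum_j j\,m_j=r^b|\mu|$ and $n=d\,|\lambda_h|$, the standard representative $A$ with data $(g,\mu)$ has size $|\mu|\deg g=|\mu|r^bd=d\,|\lambda_h|=n$, so $A\in\text{GL}(n,q)$, and Proposition~\ref{M-power-partition-split} with $s=\deg g/d=r^b$ gives the partition in $\Delta_{A^M}$ multiplicities $r^bm_j(\mu)=m_j$, i.e.\ $\Delta_{A^M}=\Delta_\alpha$. In either case $A^M$ is conjugate to $\alpha$, so $X^M=\alpha$ has a solution; reassembling the primary blocks settles the general case.

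The step I expect to need the most care is necessity: one must verify that \emph{every} polynomial occurring in $\Delta_A$ has the minimal polynomial of the $M$th power of its roots equal to $h$, so that the precise form of $\Delta_{A^M}$ in Proposition~\ref{M-power-single-poly} is available and forces $\deg g_t=r^bd$ for all $t$; once this is in hand the divisibility $r^b\mid m_j$ is immediate. The remaining ingredients — the primary-component reduction and the explicit construction of a root together with the size check — are routine given the results already proved.
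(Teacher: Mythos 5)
Your proposal is correct and follows essentially the same route as the paper: reduce to a single polynomial, use Lemma~\ref{power-poly} and Proposition~\ref{M-power-single-poly} to force every factor in $\Delta_A$ to have degree $r^b d$ when $h\notin\Phi^M$ (hence the divisibility of the multiplicities), and use Proposition~\ref{M-power-partition-split} to build an explicit root for sufficiency. The only differences are cosmetic: you justify the primary-component reduction and state the necessity argument directly rather than in contrapositive form, both of which the paper leaves implicit.
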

\begin{proof} 
Let us first assume $X^M=\alpha$ has a solution $B$ in $\text{GL}(n,q)$. It is enough to prove 
this result when $\Delta_{\alpha}$ consists of a single irreducible polynomial $f$ with associated 
partition $\lambda_{f}$. Let the degree of $f$ be $d$ and hence $|\lambda_f|=\frac{n}{d}$. Since, 
$M$ is a prime power, from Lemma~\ref{M-is-a-power} either $f(x^M)$ is an M-power polynomial or it 
splits into $r^{a-b}$ irreducible polynomials each of degree $dr^b$ for some $b\geq 1$. That is, 
either $f\in \Phi^M$ or $f\in \Phi_{M,b}$. We show that if (2) does not hold then $f$ must be an 
M-power polynomial. Thus, let us assume that there exists $i_0$, such that $m_{i_0}(\lambda_f)$, the 
number of times $i_0$ appears in the partition $\lambda_f$, is not divisible by $r^b$. Now, we need 
to show that $f(x^M)$ has a factor of degree $d$. Let $\Delta_B$ consists of irreducible polynomials 
$g_1, g_2, \ldots $ with associated partitions $\lambda_{g_1}, \lambda_{g_2}, \ldots$. Since, 
$B^M=\alpha$ the $M^{th}$ power of roots of $g_j$ are roots of $f$ for all $j$. Then, from 
Proposition~\ref{M-power-single-poly}, we conclude that  $\Delta_{B^M}$ consists of the polynomial 
$f$ with the partition where each part of $\lambda_{g_j}$ repeats $\frac{s_j}{d}$ times, where 
$deg(g_j)=s_j$. Thus, $d\mid s_j$ for all $j$. Notice that a particular part in $\lambda_f$ can come 
from more than one $\lambda_{g_j}$, i.e, $m_{i_0}(f)$ is of the form $\sum_j \frac{s_j}{d}$. Now, 
from Lemma~\ref{power-poly} we see that $g_j$ are the factors of $f(x^M)$. Invoking 
Lemma~\ref{M-is-a-power}, each irreducible factor of $f(x^M)$ (which are $g_j$ in our case) has 
degree $dr^b$. Thus, $s_j=dr^b$. Since, $r^b \nmid m_{i_0}(f)$ there exists $j_0$ such that 
$r^b\nmid s_{j_0}$. Hence, $s_{j_0}=d$. This implies $f$ is an M-power polynomial. 
		
To prove the converse, we can work with the blocks of either kind. First, let $f\in \Phi^M$, 
i.e., $f(x^M)$ has an irreducible factor of degree $d$. Then, following the proof for converse of 
Proposition~\ref{M-power-determined}, we get a solution for $X^M=\alpha$. The main case we need to 
deal with is the second kind. Let $\alpha$ has associated data $\Delta_{\alpha}$ consisting of 
polynomial $f$ and partition $\lambda_f = 1^{m_1}\ldots i^{m_i}\ldots$ with the property that $f\in 
\Phi_{M,b}$ for some $b\geq 1$, i.e, $f(x^M)$ is a product of $r^{a-b}$ irreducible polynomials each 
of degree $dr^b$, and $r^b\mid m_{i}$ for all $i$. Let $g$ be one of the factors of $f(x^M)$ and 
$\lambda_{g} = 1^{\frac{m_1}{r^b}}\ldots \lambda_i^{\frac{m_i}{r^b}} \ldots$. Let $B$ be a matrix 
associated with data $g$ and $\lambda_{g}$. Then from Proposition~\ref{M-power-partition-split}, 
$B^M$ is conjugate to $\alpha$. This completes the proof. 
\end{proof}
\noindent Now, we write a corollary of this when $M$ is a prime. 
\begin{corollary}\label{prime-case}
Let $M$ be a prime with $(q,M)=1$. Denote $t=\mathfrak M(M;q)$. Let $\alpha \in \text{GL}(n,q)$ 
with combinatorial data $\Delta_{\alpha} $ consisting of polynomials $f_i \in \Phi $ of degree $d_i$ 
and partitions $\lambda_{f_i}=(\lambda_{i_1}, \lambda_{i_2}, \ldots)$, $1\leq i \leq l$. Then, 
$X^M=\alpha$ has a solution in $\text{GL}(n,q)$ if and only if for each $1\leq i\leq l$ one of the 
following holds,
\begin{enumerate}
\item $t\nmid d_i$.
\item $f_i\in \Phi^M$ (in this case, it is equivalent to saying that $f_i(x^M)$ is 
reducible).
\item $M \mid m_j(\lambda_{f_i})$ for every $j$.
\end{enumerate}
\end{corollary}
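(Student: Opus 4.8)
The plan is to read the corollary off Proposition~\ref{prime-power-case} in the case $a=1$, translating the conditions appearing there into the language of $t=\mathfrak M(M;q)$ by means of Lemma~\ref{M-is-prime}. When $M=r$ is prime, the decomposition at the end of Section~\ref{M-power-polynomials} (or directly Lemma~\ref{M-is-prime}) gives $\Phi=\Phi^M\cup\Phi_{M,1}$, where $\Phi_{M,1}$ consists of those $f\in\Phi$ for which $f(x^M)$ is irreducible. Thus Proposition~\ref{prime-power-case} asserts that $X^M=\alpha$ is solvable in $\text{GL}(n,q)$ if and only if, for each $i$, either (a)~$f_i\in\Phi^M$, or (b)~$f_i\in\Phi_{M,1}$ and $M\mid m_j(\lambda_{f_i})$ for all $j$. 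Since both the proposition and the corollary are phrased as a condition on each $f_i$ separately, it suffices to prove, for a single irreducible $f\in\Phi$ of degree $d$, that ``(a) or (b)'' is equivalent to ``(1)~$t\nmid d$, or (2)~$f\in\Phi^M$, or (3)~$M\mid m_j(\lambda_f)$ for all $j$''.

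I would dispatch the reverse implication first: (a) is literally (2), while (b) implies (3). For the forward implication, (2) is again (a); if (1) holds then Lemma~\ref{M-is-prime}(1) says $f(x^M)$ has an irreducible factor of degree $d$, i.e. $f\in\Phi^M$, so (a) holds; and if (3) holds, then using $\Phi=\Phi^M\cup\Phi_{M,1}$, either $f\in\Phi^M$ and (a) holds, or $f\in\Phi_{M,1}$, in which case (3) supplies exactly the divisibility hypothesis needed for (b). This gives the equivalence, hence the corollary. The parenthetical in item~(2) is then justified by noting that $f\in\Phi^M$ forces $f(x^M)$ reducible (its degree is $Md>d$), while conversely Lemma~\ref{M-is-prime} shows, in both the case $t\mid d$ and the case $t\nmid d$, that reducibility of $f(x^M)$ forces a factor of degree $d$; so $f\in\Phi^M$ if and only if $f(x^M)$ is reducible.

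I do not anticipate a genuine obstacle here: the mathematical content is already in Proposition~\ref{prime-power-case} and Lemma~\ref{M-is-prime}. The only care required is the bookkeeping of the non-exclusive trichotomy (1)--(3) — remembering that the three conditions may overlap, that condition~(3) must be handled by splitting on whether $f\in\Phi^M$ or $f\in\Phi_{M,1}$, and that the dichotomy $\Phi=\Phi^M\cup\Phi_{M,1}$ is precisely where the hypothesis that $M$ is prime (so $a=1$) is used. One could alternatively give a self-contained argument paralleling the proof of Proposition~\ref{prime-power-case}, invoking Lemma~\ref{M-is-prime} in place of Lemma~\ref{M-is-a-power}, but the short deduction above is preferable.
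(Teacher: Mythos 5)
Your proposal is correct and follows essentially the same route as the paper: the authors' entire proof is the one-line remark that the corollary ``follows from Lemma~\ref{M-is-prime},'' i.e.\ specialize Proposition~\ref{prime-power-case} to $a=1$ and use Lemma~\ref{M-is-prime} to convert the condition $t\nmid d_i$ into membership in $\Phi^M$, which is exactly the translation you carry out. Your write-up simply makes explicit the bookkeeping (including the justification of the parenthetical in item~(2)) that the paper leaves to the reader.
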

\noindent This follows from Lemma~\ref{M-is-prime}.

\section{$M^{th}$ power regular semisimple and regular classes in $\text{GL}(n,q)$}

In this section, we look at the regular and regular semisimple classes in $\text{GL}(n,q)$ which are 
$M^{th}$ powers and get generating function for the same. 
	\begin{proposition}\label{M-power-regular}
Let $\alpha\in \text{GL}(n,q)$ with associated data $\Delta_{\alpha}$. Let $\alpha$ be a regular 
element with the polynomials $f_1, \ldots, f_l$ in $\Delta_{\alpha}$.  Then, $X^M=\alpha$ has a 
solution in $\text{GL}(n,q)$ if and only if $f_i$ is M-power polynomial, for all $i$.
	\end{proposition}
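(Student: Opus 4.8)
The plan is to recognize that this proposition is the regular-element instance of Proposition~\ref{M-power-determined}: a regular $\alpha$ has every $\lambda_{f_i}$ equal to a single part $(\lambda_i)$, and a one-part partition vacuously has all its parts distinct, so Proposition~\ref{M-power-determined} applies verbatim and already gives the statement. I would nonetheless also record the short self-contained argument below, since it pinpoints exactly where regularity enters.

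First I would reduce, as in the earlier proofs, to the case where $\Delta_\alpha$ consists of a single irreducible polynomial $h$ of degree $d$ with $\lambda_h=(m)$ a single part, $m=n/d$; this is harmless because the condition ``$X^M=\alpha$ has a solution'' is decided block-by-block over the distinct polynomials occurring in $\Delta_\alpha$. For the forward direction I would take $A\in\text{GL}(n,q)$ with $A^M=\alpha$ and data $\Delta_A$ given by irreducible polynomials $g_1,\dots,g_r$ of degrees $s_1,\dots,s_r$ with partitions $\lambda_{g_j}$. Since $A^M=\alpha$, the $M$-th power of every root of every $g_j$ is a root of $h$, so $(g_j)_M=h$ for all $j$, and Proposition~\ref{M-power-single-poly} forces $\lambda_h$ to be the concatenation of the $\lambda_{g_j}$ with each part of $\lambda_{g_j}$ repeated $s_j/d$ times. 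Because $\lambda_h=(m)$ has a single part, this forces $r=1$, $\deg g_1=d$, and $\lambda_{g_1}=(m)$; then Lemma~\ref{power-poly} identifies $g_1$ as an irreducible factor of $(g_1)_M(x^M)=h(x^M)$ of degree $d$, i.e. $h\in\Phi^M$. For the converse, given $h\in\Phi^M$ I would pick an irreducible factor $g$ of $h(x^M)$ with $\deg g=d$ (so the $M$-th power of each root of $g$ is a root of $h$), let $A$ be the standard representative of the class with data $(g,(m))$, and invoke Proposition~\ref{M-power-partition-split} with $s=k/d=1$ to conclude that $\Delta_{A^M}$ consists of $h$ with partition $(m)$, hence $A^M$ is conjugate to $\alpha$.

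I do not expect a genuine obstacle here: the only point needing a moment's care is the bookkeeping step observing that a single-part partition cannot be produced by Proposition~\ref{M-power-single-poly} except in the degenerate way above (one polynomial $g_1$, of the same degree as $h$, with a one-part partition), which is precisely the condition ``$h$ is $M$-power'' translated through Lemma~\ref{power-poly}. Everything else is a direct appeal to Propositions~\ref{M-power-partition-split} and~\ref{M-power-single-poly} together with Lemma~\ref{power-poly}, and the passage from a single polynomial to the general $f_1,\dots,f_l$ is immediate by treating the disjoint pieces of $\Delta_\alpha$ independently.
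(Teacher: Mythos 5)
Your proposal is correct and matches the paper's proof exactly: the paper simply observes that regularity makes each $\lambda_{f_i}$ a single-part partition, which vacuously has all parts distinct, and then cites Proposition~\ref{M-power-determined}. Your additional self-contained unpacking is a faithful specialization of that proposition's proof and introduces nothing new or problematic.
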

\begin{proof}
Since, $\alpha$ is regular the associated partition $\lambda_{f_i}$ has single part, for all 
$i$. The result follows from Proposition~\ref{M-power-determined}.
\end{proof}
\noindent We note that if $\alpha$ is a regular semisimple element, we can apply this proposition as 
well. The generating functions are as follows.
\begin{theorem}\label{Theorem-rs-rgCC}
Let $M\geq 2$ be an integer and $(q, M)=1$. For the group $\text{GL}(n,q)$, the generating 
function for regular and regular semisimple classes which are $M^{th}$ power is,
\begin{enumerate}
\item $\displaystyle 1+ \sum_{n=1}^{\infty} c(n,q,M)_{\rg}u^n= \prod_{d\geq 
1}(1-u^d)^{-N_M(q,d)}$.
\item $\displaystyle 1+ \sum_{n=1}^{\infty} c(n,q,M)_{\rs}u^n= \prod_{d\geq 
1}(1+u^d)^{N_M(q,d)}$.
\end{enumerate}
\end{theorem}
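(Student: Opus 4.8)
The plan is to reduce everything to the combinatorial criterion already established in Proposition~\ref{M-power-regular} and then carry out the same Euler-product bookkeeping that produced the formulas for $c(n)_{rg}$ and $c(n)_{rs}$ recalled in Section~\ref{Scycle-index}, but with $\Phi$ replaced throughout by $\Phi^M$.

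First I would recall the parametrizations. A regular conjugacy class of $\text{GL}(n,q)$ amounts to a choice of exponent $\lambda_f\in\mathbb Z_{\geq0}$ for every $f\in\Phi$ with $\sum_{f\in\Phi}\lambda_f\deg f=n$ (the class being the one whose $\Delta_\alpha$ is supported on $\{f:\lambda_f>0\}$, with $\lambda_f$ the single part of the partition at $f$); equivalently it is a monic polynomial of degree $n$ with non-zero constant term. A regular semisimple class is the special case in which every $\lambda_f\in\{0,1\}$, i.e.\ a finite subset $S\subseteq\Phi$ with $\sum_{f\in S}\deg f=n$. By Proposition~\ref{M-power-regular} — which applies verbatim to regular semisimple elements, and which transfers from elements to classes because being an $M^{th}$ power is conjugacy invariant while $\Delta_\alpha$ is a complete invariant of the class — such a class lies in the image of $\omega$ if and only if every polynomial occurring in its data is an M-power polynomial, that is, lies in $\Phi^M$; note that $x\notin\Phi^M$, so the non-zero constant term condition is automatic. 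Hence $c(n,M)_{rg}$ is the number of functions $\Phi^M\to\mathbb Z_{\geq0}$ of weighted sum $n$, and $c(n,M)_{rs}$ is the number of finite subsets of $\Phi^M$ of total degree $n$.

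The generating functions then follow by the usual product expansion, clubbing together the $N_M(q,d)$ polynomials of $\Phi^M$ of each degree $d$:
$$1+\sum_{n\geq1}c(n,M)_{rg}u^n=\prod_{f\in\Phi^M}\sum_{j\geq0}u^{j\deg f}=\prod_{d\geq1}(1-u^d)^{-N_M(q,d)},$$
$$1+\sum_{n\geq1}c(n,M)_{rs}u^n=\prod_{f\in\Phi^M}\bigl(1+u^{\deg f}\bigr)=\prod_{d\geq1}(1+u^d)^{N_M(q,d)}.$$
I do not expect a genuine obstacle here beyond this bookkeeping; the only points that deserve a line of care are the class-versus-element transfer of Proposition~\ref{M-power-regular} and the remark that $\Phi^M$, being a subset of $\Phi$, automatically excludes $x$ (so the two products are over the right index set). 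As a consistency check one may specialise formally to $M=1$, so that $\Phi^M=\Phi$ and $N_M=N$, recovering $\prod_d(1-u^d)^{-N(q,d)}=\tfrac{1-u}{1-qu}$ and $\prod_d(1+u^d)^{N(q,d)}=\tfrac{1-qu^2}{(1+u)(1-qu)}$ from the excerpt.
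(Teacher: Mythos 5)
Your proposal is correct and follows essentially the same route as the paper: apply Proposition~\ref{M-power-regular} to identify the $M^{th}$-power regular (resp.\ regular semisimple) classes with monic (resp.\ separable monic) M-power polynomials of degree $n$ with non-zero constant term, then expand the Euler product over $\Phi^M$ grouped by degree. Your extra remarks on the class-versus-element transfer and on $x\notin\Phi^M$ are harmless elaborations of points the paper leaves implicit.
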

\begin{proof}
From Proposition~\ref{M-power-regular}, it follows that a regular class $\alpha \in 
\text{GL}(n,q)$ is a $M^{th}$ power in $\text{GL}(n,q)$ if and only if each irreducible factor 
$f(x)$ of its characteristic polynomial $\chi_{\alpha}(x)$ is M-power polynomial. In other words, 
the regular conjugacy classes which are  $M^{th}$ power, are in one-one correspondence with the set 
of M-power polynomials with non-zero constant term. 
Therefore,
$$ 1+ \sum_{n=1}^{\infty} c(n,q,M)_{\rg}u^n= \prod_{f\in \Phi^M} (1-u^{deg(f)})^{-1}= 
\prod_{d\geq 1} (1-u^d)^{-N_{M}(q,d)}. $$
This proves the first part.

The regular semisimple $M^{th}$ power conjugacy classes in $\text{GL}(n,q)$ are characterized by 
separable M-power polynomials with non-zero constant term, and hence, 
$$1+\sum_{n=1}^{\infty} c(n,q,M)_{\rs}u^n= \prod_{f\in \Phi^M} (1+u^{deg(f)})= \prod_{d\geq 
1}(1+u^d)^{N_M(q,d)}.$$
This proves the required result.
\end{proof}
\noindent Now, we can use this to get the generating function for the $M^{th}$ power regular and 
regular semisimple elements. 
\begin{theorem}\label{Theorem-rs-rgAE}
For the group $\text{GL}(n, q)$, and $M\geq 2$ with the condition that $(q,M)=1$, 
\begin{enumerate}
\item the generating function for the regular semisimple elements which are $M^{th}$ power 
is 
$$\displaystyle 1+ \sum_{n=1}^{\infty} \frac{|\text{GL}(n,q)_{\rs}^M|}{|\text{GL}(n,q)|}u^n= 
\prod_{d\geq 1} \left(1+\frac{u^d}{q^d-1} \right)^{N_M(q,d)}.$$
\item The generating function for the regular elements which are $M^{th}$ power is  
\begin{eqnarray*} 1+ \sum_{n=1}^{\infty} 
\frac{|\text{GL}(n,q)_{\rg}^M|}{|\text{GL}(n,q)|}u^n &=& \prod_{d\geq 1} \left(1+\sum_{j=1}^{\infty} 
\frac{u^{jd}}{q^{(j-1)d}(q^d-1)} \right)^{N_M(q,d)} \\ &=& \prod_{d\geq 1} \left(1-\frac{u^d}{q^d} 
\right)^{-N_M(q,d)} \prod_{d\geq 1} \left(1+\frac{u^d}{q^d(q^d-1)} \right)^{N_M(q,d)}.
\end{eqnarray*}
\end{enumerate}
\end{theorem}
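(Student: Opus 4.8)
The plan is to bootstrap from the two class-counting generating functions in Theorem~\ref{Theorem-rs-rgCC} to element counts by tracking centralizer orders, exactly in the spirit of the classical formulas recalled at the end of Section~2. The starting observation is Proposition~\ref{M-power-regular}: a regular (or regular semisimple) class is an $M^{th}$ power precisely when every irreducible factor of its characteristic polynomial lies in $\Phi^M$. So in the cycle index~\eqref{cycle-index} we should restrict the product over $f\in\Phi$ to the sub-product over $f\in\Phi^M$, and simultaneously restrict the inner sum over partitions $\lambda$ to those allowed for a regular (resp.\ regular semisimple) element, namely $\lambda$ a single part $(j)$ for regular and $\lambda=(1)$ for regular semisimple.

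For part (1), a regular semisimple element contributes, for each $f\in\Phi^M$ of degree $d$ appearing in its data, a factor with $\lambda=(1)$: here $\lambda'=(1)$, so $\sum_i(\lambda'_i)^2=1$, $m_1(\lambda)=1$, and the denominator $q^{d}\cdot(1-q^{-d})=q^d-1$. Hence the local factor at each such $f$ is $1+\frac{u^d}{q^d-1}$, and clubbing the $N_M(q,d)$ polynomials of degree $d$ in $\Phi^M$ gives $\prod_{d\ge1}(1+\frac{u^d}{q^d-1})^{N_M(q,d)}$, as claimed. For part (2), a regular element allows $\lambda=(j)$ for every $j\ge1$; then $\lambda'=(1,1,\ldots,1)$ with $j$ ones, so $\sum_i(\lambda'_i)^2=j$, while $m_j(\lambda)=1$ and all other $m_t=0$, giving denominator $q^{jd}\cdot(1-q^{-d})=q^{(j-1)d}(q^d-1)$. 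Summing over $j$ and clubbing by degree yields the first displayed product in (2). The second equality in (2) is then the purely algebraic identity $1+\sum_{j\ge1}\frac{v^j}{q^{(j-1)d}(q^d-1)} = \bigl(1-\frac{v}{q^d}\bigr)^{-1}\bigl(1+\frac{v}{q^d(q^d-1)}\bigr)$ with $v=u^d$, verified by multiplying out and resumming a geometric series, exactly mirroring Wall's reformulation recalled before Section~2.2.

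The one point that needs a word of care — and the likely main obstacle — is justifying that the cycle index genuinely computes the \emph{element} probability $\frac{|\text{GL}(n,q)^M_{rg}|}{|\text{GL}(n,q)|}$ after this restriction: the coefficient extracted from the restricted product is $\sum_{\alpha}\frac{1}{|Z(\alpha)|}$ over regular $M^{th}$-power classes $\alpha$, and one uses $|\alpha^{\text{GL}(n,q)}| = |\text{GL}(n,q)|/|Z(\alpha)|$ together with the fact (Proposition~\ref{M-power-regular}, being a class function) that $\alpha$ is an $M^{th}$ power iff its class is, so the sum over classes weighted by $1/|Z(\alpha)|$ equals $|\text{GL}(n,q)^M_{rg}|/|\text{GL}(n,q)|$. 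With that bookkeeping in place, the proof is: invoke Proposition~\ref{M-power-regular} to identify which data are allowed, specialize the centralizer-order formula to the regular semisimple partition $(1)$ and to the regular partitions $(j)$, club polynomials of equal degree using $N_M(q,d) = |\{f\in\Phi^M: \deg f = d\}|$, and finally perform the elementary geometric-series manipulation to obtain the second form in (2).
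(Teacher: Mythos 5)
Your proposal is correct and follows essentially the same route as the paper: invoke the characterization of regular (and regular semisimple) $M^{th}$-power classes via $\Phi^M$, restrict the cycle index~\eqref{cycle-index} to $f\in\Phi^M$ with partitions $(1)$ resp.\ $(j)$, compute the centralizer orders $q^d-1$ and $q^{(j-1)d}(q^d-1)$, club by degree using $N_M(q,d)$, and verify the alternate form in (2) by an elementary series identity. The only cosmetic difference is that you cite Proposition~\ref{M-power-regular} where the paper cites Proposition~\ref{M-power-determined} directly, and you make explicit the standard bookkeeping that the cycle index coefficient $\sum_\alpha 1/|Z(\alpha)|$ equals the element probability, which the paper leaves implicit.
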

\begin{proof}
We use Proposition~\ref{M-power-determined} here. To get (1), in the Equation~\ref{cycle-index} 
of cycle index generating function,  we take $n=1$ on the right side (and hence the second sum runs 
over partitions of $1$ which is $(1)$) and the outer product runs over all $f\in \Phi^M$. Thus, to 
get the desired generating function we put $x_{f,\lambda}=1$, when $f\in \Phi^M$ and $0$ otherwise. 
We get,
$$1+ \sum_{n=1}^{\infty} \frac{|\text{GL}(n,q)_{\rs}^M|}{|\text{GL}(n,q)|} u^n = \prod_{f\in 
\Phi^M}\left(1+\frac{u^{deg(f)}}{q^{deg(f)}-1}\right) = \prod_{d\geq 1} \left(1+\frac{u^d}{q^d-1} 
\right)^{N_{M}(q,d)}.$$
Here we used the following: for the partition $(1)=1^1$ and $q^{deg(f). 
\sum_i(\lambda'_i)^2}\left( \frac{1}{q^{deg(f)}}\right)_1=  q^{deg(f)}\left(1-  
\frac{1}{q^{deg(f)}}\right) =  q^{deg(f)}-1$.

The generating function for regular elements is obtained in similar fashion. Here we take the 
partition $(n) \vdash n$ on the right in the cycle index generating function. The transpose of this 
partition is $(n)'= (1, 1, \ldots, 1)=1^n$ and hence $q^{deg(f). \sum_i(\lambda'_i)^2}\left( 
\frac{1}{q^{deg(f)}}\right)_1 =  q^{n.deg(f)}\left(1-  \frac{1}{q^{deg(f)}}\right) =  
q^{(n-1).deg(f)} (q^{deg(f)}-1)$.
Therefore,
\begin{eqnarray*}
1+ \sum_{n=1}^{\infty}  \frac{|\text{GL}(n,q)_{\rg}^M|}{|\text{GL}(n,q)|} u^n   &=&  
\prod_{f\in \Phi^M}  \left( 1 + \sum_{j=1}^{\infty}  \frac{u^{j.deg(f)}}{q^{(j-1) deg(f)}(q^{deg(f)} 
-1 )}\right)\\
&= & \prod_{d\geq 1} \left(1+\sum_{j=1}^{\infty} \frac{u^{jd}}{q^{(j-1)d}(q^d-1)} 
\right)^{N_{M}(q, d)}.
\end{eqnarray*}
To deduce the alternate formula, we note that,
$$1+\sum_{j=1}^{\infty} \frac{u^{jd}}{q^{(j-1)d}(q^d-1)} = 
\left(1-\frac{u^d}{q^d}\right)^{-1}\left(1+\frac{u^d}{q^d(q^d-1)}\right)$$
which can be verified by computing coefficients on both sides.
\end{proof}

\section{$M^{th}$ power semisimple classes in $\text{GL}(n,q)$ when $M$ is a prime power}
	
In this section, we deal with semisimple elements which are $M^{th}$ power. We assume $M=r^a$ for 
some prime $r$ and $(q,M) = 1$.  
\begin{proposition}\label{M-power-ssclass}
Let $M=r^a$ be a prime power and $(q,M)=1$. Let $\alpha\in \text{GL}(n, q)$ be semisimple with 
the corresponding combinatorial data $\Delta_{\alpha}$ consisting of polynomials $f_i$ and 
partitions $\lambda_{f_i}$. Then, $X^M=\alpha$ has a solution in $\text{GL}(n, q)$ if and only if 
for each $i$, one of the following holds,
\begin{enumerate}
\item $f_i\in \Phi^M$.
\item $f_i\in \Phi_{M,b}$,  for some $1\leq b \leq a$, and $r^b \mid |\lambda_{f_i}|$.  
\end{enumerate}
\end{proposition}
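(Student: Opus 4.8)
The plan is to deduce this directly from Proposition~\ref{prime-power-case}, which already characterizes when an arbitrary $\alpha$ is an $M$-th power for $M=r^a$, by specializing to the semisimple case. The key observation is purely combinatorial: if $\alpha$ is semisimple, then every partition in $\Delta_\alpha$ is of the form $\lambda_{f_i}=1^{|\lambda_{f_i}|}$, so in power notation $m_1(\lambda_{f_i})=|\lambda_{f_i}|$ and $m_j(\lambda_{f_i})=0$ for all $j\geq 2$. Hence the divisibility condition ``$r^b\mid m_j(\lambda_{f_i})$ for all $j$'' appearing in part (2) of Proposition~\ref{prime-power-case} holds if and only if $r^b\mid |\lambda_{f_i}|$, the constraints coming from $m_j$ with $j\geq 2$ being vacuous. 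Substituting this into Proposition~\ref{prime-power-case} yields exactly conditions (1) and (2) of the present statement.

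Alternatively, one can give a self-contained argument that tracks semisimplicity through the power map. First I would note that if $X^M=\alpha$ with $\alpha$ semisimple and $(q,M)=1$, then $X$ is itself semisimple: writing $X=X_sX_u$ for its Jordan decomposition, $X^M=X_s^MX_u^M$ is the Jordan decomposition of $X^M$, so $X_u^M$ must be trivial; since $X_u$ has order a power of $\mathrm{char}(\mathbb F_q)$ and $(q,M)=1$, this forces $X_u=1$. Thus both $\alpha$ and any solution $X$ have combinatorial data in which all partitions are of the form $1^{k}$. For the forward direction, reduce to the case $\Delta_\alpha=\{(f,1^{|\lambda_f|})\}$ with $\deg f=d$; by Lemma~\ref{M-is-a-power} either $f\in\Phi^M$ (case 1) or $f\in\Phi_{M,b}$ for some $b\geq 1$, in which case every irreducible factor of $f(x^M)$ has degree $dr^b$. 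The polynomials $g_j$ occurring in $\Delta_X$ are among these factors by Lemma~\ref{power-poly}, so each $\deg g_j=dr^b$, and Proposition~\ref{M-power-single-poly} shows $\lambda_f=1^{r^b\sum_j|\lambda_{g_j}|}$, whence $r^b\mid|\lambda_f|$ (case 2). For the converse, if $f\in\Phi^M$ one proceeds exactly as in the converse part of Proposition~\ref{M-power-determined}; if $f\in\Phi_{M,b}$ with $r^b\mid|\lambda_f|$, pick an irreducible factor $g$ of $f(x^M)$ of degree $dr^b$, let $B$ have data $(g,1^{|\lambda_f|/r^b})$, and apply Proposition~\ref{M-power-partition-split} to conclude that $B^M$ is conjugate to $\alpha$.

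Since every ingredient is already available, there is no genuine obstacle. The only points needing care are the bookkeeping that the condition on all multiplicities $m_j$ collapses to the single divisibility $r^b\mid|\lambda_{f_i}|$ in the semisimple case, and, in the self-contained route, the remark that a solution $X$ is automatically semisimple when $(q,M)=1$.
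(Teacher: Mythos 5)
Your first paragraph is exactly the paper's proof: the authors simply observe that for semisimple $\alpha$ every partition is $1^{|\lambda_{f_i}|}$, so the condition ``$r^b\mid m_j(\lambda_{f_i})$ for all $j$'' in Proposition~\ref{prime-power-case} collapses to $r^b\mid|\lambda_{f_i}|$. The additional self-contained argument you sketch is also correct, but it is not needed and merely re-runs the proof of Proposition~\ref{prime-power-case} in the special case where all parts equal $1$.
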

\begin{proof}
We recall that when $\alpha$ is semisimple all partitions in $\Delta_{\alpha}$ are of the form 
$1^{|\lambda_{f_i}|}$. Thus, the second condition in Proposition~\ref{prime-power-case} becomes the 
required one here.
\end{proof}
\noindent Now recall the notation $N_M^i(q, d)$ preceding the Proposition~\ref{nmiqd}. We have,    
\begin{theorem}\label{gen-fun-primepower}
Let $M=r^a$ be a prime power and $(q,M)=1$. Then, we have the following generating functions:
\begin{enumerate}
\item $\displaystyle 1+ \sum_{n=1}^{\infty} c(n,q,M)_{\s} u^n = \prod_{i=0}^{a}\prod_{d\geq 
1} \left(1-u^{r^id}\right)^{-N_M^i(q,d)}$.
\item $1+\displaystyle 
\sum_{n=1}^{\infty}\frac{|\text{GL}(n,q)_{\s}^M|}{|\text{GL}(n,q)|}u^n = \prod_{i=0}^{a}\prod_{d\geq 
1} \left(1+ \sum_{j=1}^{\infty} \frac{u^{r^ijd}}{{q^{ \frac{r^ij(r^ij -1)d}{2}} } 
\prod_{t=1}^{r^ij}(q^{td}-1)} \right)^{N_M^i(q,d)}$.
\end{enumerate}
\end{theorem}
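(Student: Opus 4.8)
The plan is to prove both parts by combining Proposition~\ref{M-power-ssclass} (the combinatorial criterion for a semisimple class to be an $M^{th}$ power) with the disjoint decomposition $\Phi=\bigsqcup_{i=0}^{a}\Phi_{M,i}$ and the counting in Proposition~\ref{nmiqd}. The decomposition is genuinely disjoint: a polynomial of degree $d$ cannot be both $M$-power and lie in $\Phi_{M,b}$ for some $b\geq 1$, since the irreducible factors of $f(x^M)$ would then have to be of degree both $d$ and $dr^{b}>d$. By definition $\#\{f\in\Phi_{M,i}:\deg f=d\}=N_M^i(q,d)$, with $N_M^0=N_M$.

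\emph{Part (1).} A semisimple conjugacy class of $\text{GL}(n,q)$ is determined by its characteristic polynomial, an arbitrary monic degree-$n$ polynomial with nonzero constant term; equivalently it is an assignment $f\mapsto m_f\in\mathbb Z_{\geq 0}$ for $f\in\Phi$ with $\sum_f m_f\deg f=n$, where $m_f=|\lambda_f|$ is the multiplicity of $f$. By Proposition~\ref{M-power-ssclass}, the class is an $M^{th}$ power precisely when each $f$ with $m_f>0$ either lies in $\Phi^M$ (no constraint on $m_f$) or lies in $\Phi_{M,b}$ for some $1\leq b\leq a$ with $r^b\mid m_f$. The class generating function therefore factors over $f\in\Phi$, with local factor $\sum_{m\geq 0}u^{m\deg f}=(1-u^{\deg f})^{-1}$ when $f\in\Phi^M$ and $\sum_{k\geq 0}u^{kr^b\deg f}=(1-u^{r^b\deg f})^{-1}$ when $f\in\Phi_{M,b}$ with $b\geq 1$. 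Grouping the factors by degree $d$ and type $i$ yields $\prod_{i=0}^{a}\prod_{d\geq 1}(1-u^{r^id})^{-N_M^i(q,d)}$.

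\emph{Part (2).} I would start from the cycle index generating function \eqref{cycle-index} and specialize the variables: set $x_{f,\lambda}=1$ when $\lambda=1^{j}$ for some $j\geq 1$ and the pair $(f,j)$ satisfies the criterion of Proposition~\ref{M-power-ssclass} ($f\in\Phi^M$, or $f\in\Phi_{M,b}$ with $r^b\mid j$), and $x_{f,\lambda}=0$ otherwise. With this choice $\prod_{f}x_{f,\lambda_f(\alpha)}$ equals $1$ exactly when $\alpha$ is semisimple and an $M^{th}$ power and $0$ otherwise, so $Z_{\text{GL}(n,q)}$ specializes to $|\text{GL}(n,q)_{ss}^M|/|\text{GL}(n,q)|$. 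On the right of \eqref{cycle-index} only $\lambda=1^{j}$ survives for each $f$; for such $\lambda$ one has $\lambda'=(j)$, $\sum_i(\lambda_i')^2=j^2$, and $\prod_{t\geq1}(q^{-d})_{m_t(\lambda)}=(q^{-d})_j=\prod_{t=1}^{j}(1-q^{-td})$, where $d=\deg f$, so the denominator $q^{dj^2}\prod_{t=1}^{j}(1-q^{-td})$ simplifies to $q^{dj(j-1)/2}\prod_{t=1}^{j}(q^{td}-1)$. Hence the local factor at $f$ is $1+\sum_{j\geq1}u^{jd}/(q^{dj(j-1)/2}\prod_{t=1}^{j}(q^{td}-1))$ when $f\in\Phi^M$, and the same sum with $j$ restricted to multiples of $r^b$ (equivalently, after substituting $j=r^bk$ and renaming, a sum over all $k\geq1$ of the required shape) when $f\in\Phi_{M,b}$. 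Collecting factors by degree $d$ and type $i$ gives the stated product.

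The one point needing real care is the algebraic reconciliation: verifying $q^{dj^2}\prod_{t=1}^{j}(1-q^{-td})=q^{dj(j-1)/2}\prod_{t=1}^{j}(q^{td}-1)$, which rests on $j^2-\tfrac{j(j+1)}{2}=\tfrac{j(j-1)}{2}$, and then tracking the reindexing $j=r^{i}k$ so that the exponents $r^ijd$ and $\tfrac{r^ij(r^ij-1)d}{2}$ and the product $\prod_{t=1}^{r^ij}(q^{td}-1)$ in the theorem come out exactly as written. Beyond this bookkeeping, the argument is a direct assembly of Proposition~\ref{M-power-ssclass}, the disjoint decomposition of $\Phi$, and Proposition~\ref{nmiqd}.
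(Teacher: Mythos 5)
Your proposal is correct and follows essentially the same route as the paper: part (1) via the bijection between $M^{th}$-power semisimple classes and monic polynomials whose $\Phi_{M,i}$-factors occur with multiplicity divisible by $r^i$, giving the Euler product over the disjoint classes $\Phi_{M,i}$, and part (2) by specializing the cycle index generating function to $x_{f,\lambda}=1$ exactly for $\lambda=1^{r^i j}$ with $f\in\Phi_{M,i}$ and simplifying the centralizer order $q^{d j^2}\prod_{t=1}^{j}(1-q^{-td})=q^{dj(j-1)/2}\prod_{t=1}^{j}(q^{td}-1)$. The only cosmetic difference is that you cite Proposition~\ref{M-power-ssclass} where the paper invokes Proposition~\ref{prime-power-case} directly, but these say the same thing for semisimple classes.
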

\begin{proof}
Recall the notation $\Phi_{M, i}$ defined at the end of Section~\ref{M-power-polynomials} when 
$M=r^a$. By Proposition~\ref{prime-power-case}, it is clear that a semisimple conjugacy class which 
is $M^{th}$ power, corresponds to (in fact, one-one correspondence) a monic polynomial $g$ of degree 
$n$ over $\mathbb{F}_q$ with the property that the multiplicity of each of its irreducible factors 
which belong to $\Phi_{M, i}$ for some $i$, must be a multiple of $r^i$. Therefore, we get,
\begin{eqnarray*}
1 + \sum_{n=1}^{\infty} c(n,q,M)_{\s}u^n &=& \prod_{i=0}^{a} \prod_{f\in \Phi_{M,i}} 
\left(1+ u^{r^i deg(f)} + u^{2r^i deg(f)}+ \cdots \right)  \\ 
&=& \prod_{i=0}^{a}\prod_{f\in \Phi_{M, i}} \left( 1 - u^{r^i deg(f)} \right)^{-1} = 
\prod_{i=0}^{a} \prod_{d\geq 1} \left(1-u^{r^id}\right)^{-N_M^i(q,d)}. 
\end{eqnarray*}
\noindent This proves the first part.

For the proof of second part, we use the cycle index generating function once again. In the 
Equation~\ref{cycle-index}, on the right hand side, we put $x_{f, \lambda}=1$ when $\lambda=(1, 1, 
\ldots, 1) \vdash r^ij$, and $f\in \Phi_{M, i}$ for each $j\geq 1$, else we put $x_{f, \lambda}=0$.  
We also note that when $\lambda=(1, 1, \ldots, 1) \vdash n$ and $f\in \Phi$, we have,
\begin{eqnarray*}
q^{deg(f).\sum_{i} (\lambda^{'}_{i})^2}  \prod_{i\geq 1} 
\left(\frac{1}{q^{deg(f)}}\right)_{m_i(\lambda)} &=& q^{n^2 deg(f)} 
\left(1-\frac{1}{q^{deg(f)}}\right) \cdots \left(1-\frac{1}{q^{n.deg(f)}}\right) \\ 
&=& q^{n^2.deg(f)}\frac{(q^{deg(f)}-1) \cdots (q^{n. deg(f)}-1)}{q^{\frac{n(n+1)}{2} 
deg(f)}} \\
&=& q^{\frac{n(n-1)}{2} deg(f)} \prod_{i=1}^{n}(q^{i .deg(f)}-1).
\end{eqnarray*}
Therefore, we have,
$$1+\sum_{n=1}^{\infty} \frac{|\text{GL}(n,q)_{\s}^M|}{|\text{GL}(n,q)|} u^n = 
\prod_{i=0}^{a}\left( \prod_{f\in \Phi_{M, i}} \left( 1+ \sum_{j=1}^{\infty} \frac{u^{r^ij. 
deg(f)}}{{q^{\frac{r^ij(r^ij-1). deg(f)}{2}}} \prod_{t=1}^{r^ij}(q^{t.deg(f)}-1)} \right)\right).$$
This gives the desired generating function.
\end{proof}

In the case when $M=r$, a prime, the formula gets further simplified as $i=0$ and $1$ in the 
formula above. 
\begin{corollary}
Let $M$ be a prime and $(q,M)=1$. Let $\alpha\in \text{GL}(n,q)$ be semisimple with the 
corresponding combinatorial data $\Delta_{\alpha}$ consisting of polynomials $f_i$ and partitions 
$\lambda_{f_i}$. Then, $X^M=\alpha$ has a solution in $\text{GL}(n,q)$ if and only if for each $i$, 
one of the following holds,
\begin{enumerate}
\item $f_i\in \Phi^M$.  
\item $M\mid |\lambda_{f_i}|$.
\end{enumerate}
\end{corollary}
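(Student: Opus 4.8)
The plan is to obtain this corollary as the special case $a=1$ of Proposition~\ref{M-power-ssclass}, which already characterises when a semisimple $\alpha$ is an $M^{th}$ power for an arbitrary prime power $M=r^{a}$. First I would set $M=r$, so that $a=1$, and observe that in condition (2) of that proposition the index $b$ ranges only over $1\le b\le 1$, i.e.\ $b=1$. Hence condition (2) reads ``$f_i\in\Phi_{M,1}$ and $r\mid|\lambda_{f_i}|$'', and since $M=r$ this is exactly ``$f_i\in\Phi_{M,1}$ and $M\mid|\lambda_{f_i}|$''.

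The one point that needs care is explaining why the membership clause ``$f_i\in\Phi_{M,1}$'' may be dropped, leaving only the arithmetic condition ``$M\mid|\lambda_{f_i}|$'' as stated. For this I would invoke the decomposition recorded at the end of Section~\ref{M-power-polynomials}: for $M=r^{a}$ one has $\Phi=\bigcup_{b=0}^{a}\Phi_{M,b}$, and this union is disjoint by Lemma~\ref{M-is-a-power}, since each irreducible $f$ falls into exactly one class according to how $f(x^{M})$ factors. When $a=1$ this reduces to the two-part partition $\Phi=\Phi^{M}\sqcup\Phi_{M,1}$. Consequently, for any irreducible factor $f_i$ appearing in $\Delta_{\alpha}$, either $f_i\in\Phi^{M}$, which is condition (1), or else $f_i\in\Phi_{M,1}$ automatically. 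In the latter situation the membership clause is vacuous, so condition (2) of the proposition collapses to the single requirement $M\mid|\lambda_{f_i}|$.

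Putting these together, the disjunction ``(1) or (2)'' of Proposition~\ref{M-power-ssclass} with $a=1$ becomes precisely ``$f_i\in\Phi^{M}$, or $M\mid|\lambda_{f_i}|$'' for each $i$, which is the assertion of the corollary. As a cross-check I would note that the same statement can be read off directly from Corollary~\ref{prime-case}: for a semisimple $\alpha$ every partition has the form $1^{|\lambda_{f_i}|}$, so the only nonzero multiplicity is $m_1(\lambda_{f_i})=|\lambda_{f_i}|$ and condition (3) there becomes $M\mid|\lambda_{f_i}|$, while conditions (1) and (2) there merge into $f_i\in\Phi^{M}$ because Lemma~\ref{M-is-prime} shows that $\mathfrak M(M;q)\nmid d_i$ forces $f_i$ to be an M-power polynomial. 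Since the whole argument is merely a specialisation, there is no genuine obstacle; the only thing to be vigilant about is the bookkeeping that justifies discarding the redundant membership clause, namely the disjointness of the decomposition $\Phi=\Phi^{M}\sqcup\Phi_{M,1}$.
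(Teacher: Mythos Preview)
Your proposal is correct and follows essentially the same route as the paper, which simply records that the corollary ``follows from Proposition above and Corollary~\ref{prime-case}.'' You have merely made explicit the one bookkeeping step the paper leaves implicit: using the decomposition $\Phi=\Phi^{M}\sqcup\Phi_{M,1}$ (available from Lemma~\ref{M-is-a-power} or Lemma~\ref{M-is-prime}) to see that the clause $f_i\in\Phi_{M,1}$ is automatic once $f_i\notin\Phi^{M}$, so that Proposition~\ref{M-power-ssclass} with $a=1$ reduces to the stated disjunction.
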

\begin{proof}
This follows from Proposition above and Corollary~\ref{prime-case} .
\end{proof}
\begin{corollary}
Let $M$ be a prime with $(q,M)=1$. Then, 
\begin{eqnarray*}
\displaystyle 1+\sum_{n=1}^{\infty}c(n,q,M)_{\s}u^n = \left(\frac{1-u^M}{1-qu^M}\right) 
\prod_{d\geq 1}\left(1+u^d+u^{2d} + \cdots +u^{d(M-1)}\right)^{N_M(q,d)}.
\end{eqnarray*}
\end{corollary}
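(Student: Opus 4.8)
The plan is to derive this from the generating function in Theorem~\ref{gen-fun-primepower}(1) specialized to $M = r$ a prime, using the partition of $\Phi$ into $\Phi^M = \Phi_{M,0}$ and $\Phi_{M,1}$, together with the counting identity $N(q,d) = N_M(q,d) + N_M^1(q,d)$ (so $N_M^1(q,d) = \widehat{N}(q,d)$, since $a = 1$ leaves only the index $i = 1$ beyond $i = 0$). First I would write, from Theorem~\ref{gen-fun-primepower}(1) with $a = 1$:
\begin{equation*}
1 + \sum_{n=1}^{\infty} c(n,M)_{ss} u^n = \prod_{d\geq 1}\left(1-u^{d}\right)^{-N_M(q,d)} \prod_{d\geq 1}\left(1-u^{Md}\right)^{-N_M^1(q,d)}.
\end{equation*}

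Next I would massage the second product. The key elementary identity is $1 - u^{Md} = (1-u^d)\left(1 + u^d + \cdots + u^{d(M-1)}\right)$, hence $\left(1 - u^{Md}\right)^{-N_M^1(q,d)} = (1-u^d)^{-N_M^1(q,d)} \left(1 + u^d + \cdots + u^{d(M-1)}\right)^{-N_M^1(q,d)}$. Combining the factors $(1-u^d)^{-N_M(q,d)} (1-u^d)^{-N_M^1(q,d)} = (1-u^d)^{-(N_M(q,d)+N_M^1(q,d))} = (1-u^d)^{-N(q,d)}$, and recalling from Section~\ref{Scycle-index} that $\prod_{d\geq 1}(1-u^d)^{-N(q,d)} = \frac{1-u}{1-qu}$ (this is exactly the generating function for $c(n)_{ss} = q^n - q^{n-1}$). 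So the product collapses to
\begin{equation*}
\frac{1-u}{1-qu} \prod_{d\geq 1}\left(1 + u^d + \cdots + u^{d(M-1)}\right)^{-N_M^1(q,d)}.
\end{equation*}

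The remaining discrepancy with the claimed formula is that the target has $\frac{1-u^M}{1-qu^M}$ in front and the bracket to the $+N_M(q,d)$ power (not $-N_M^1(q,d)$). To reconcile, I would again use $\frac{1-u}{1-qu} = \frac{(1-u^M)/(1+u+\cdots+u^{M-1})}{(1-qu^M + \text{?})}$—more carefully, apply $1 - u^M = (1-u)(1+u+\cdots+u^{M-1})$ and $1 - qu^M = \cdots$; actually the cleaner route is to note $\prod_{d\geq 1}(1 + u^d + \cdots + u^{d(M-1)})^{N(q,d)}$ relates $\frac{1-u}{1-qu}$ and $\frac{1-u^M}{1-qu^M}$ via $\prod_d (1-u^{Md})^{-N(q,d)} = \frac{1-u^M}{1-qu^M}$ (substitute $u \mapsto u^M$ in the $c(n)_{ss}$ generating function, since $\sum_d N(q,d) x^d$-type identities are preserved) together with $\prod_d (1-u^{Md})^{-N(q,d)} = \prod_d (1-u^d)^{-N(q,d)} \prod_d (1+u^d+\cdots+u^{d(M-1)})^{-N(q,d)}$. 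Therefore $\prod_d (1+u^d+\cdots+u^{d(M-1)})^{-N(q,d)} = \frac{1-u^M}{1-qu^M} \cdot \frac{1-qu}{1-u}$, equivalently $\frac{1-u}{1-qu} \prod_d(1+\cdots+u^{d(M-1)})^{-N_M^1(q,d)} = \frac{1-u^M}{1-qu^M}\prod_d(1+u^d+\cdots+u^{d(M-1)})^{N(q,d) - N_M^1(q,d)} = \frac{1-u^M}{1-qu^M}\prod_d(1+u^d+\cdots+u^{d(M-1)})^{N_M(q,d)}$, which is exactly the claim. The only mild obstacle is bookkeeping the exponents and confirming the substitution $u \mapsto u^M$ in $\prod_d(1-u^d)^{-N(q,d)} = \frac{1-u}{1-qu}$ is legitimate — it is, since that identity holds as formal power series in the single variable and remains valid after the monomial substitution.
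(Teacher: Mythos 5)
Your proposal is correct and follows essentially the same route as the paper: both start from Theorem~\ref{gen-fun-primepower}(1) with $a=1$, use $N_M^1(q,d)=N(q,d)-N_M(q,d)$, the factorization $1-u^{Md}=(1-u^d)(1+u^d+\cdots+u^{d(M-1)})$, and the identity $\prod_{d\geq 1}(1-u^d)^{-N(q,d)}=\frac{1-u}{1-qu}$ (with $u\mapsto u^M$). The only difference is bookkeeping: you collect the $(1-u^d)$ factors first and then convert, while the paper collects the $(1-u^{Md})$ factors directly; both manipulations are valid and yield the stated formula.
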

\begin{proof}
Recall that here we have $N(q, d) = N_M^0(q, d) + N_M^1(q, d) = N_M(q, d) + N_M^1(q, d)$. 
By taking $a=1$ (and thus $r=M$) in Theorem~\ref{gen-fun-primepower}, we have, 
\begin{eqnarray*}
&&1+\sum_{n=1}^{\infty}c(n,q,M)_{\s}u^n = \prod_{d\geq 1}(1-u^{d})^{-N_M(q,d)} \prod_{d\geq 
1}(1 - u^{Md})^{-N_M^1(q,d)}  \\ 
&=& \prod_{d\geq 1}(1-u^{d})^{-N_M(q,d)} \prod_{d\geq 1}(1 - u^{Md})^{N_M(q,d)-N(q, d)}  \\
&=& \prod_{d\geq 1}\left(\frac{1-u^{Md}}{1-u^d} \right)^{N_M(q,d)} \prod_{d\geq 1}(1 - 
u^{Md})^{-N(q, d)}  =  \prod_{d\geq 1}\left(\frac{1-u^{Md}}{1-u^d} \right)^{N_M(q,d)}. 
\left(\frac{1-u^M}{1-qu^M}\right).
\end{eqnarray*}
The last equality follows from the generating function formula for $N(q, d)$ (see the 
Equation~\ref{number-rgclasses} by taking $u^M$ for $u$). 
\end{proof}

\section{$M^{th}$ power conjugacy classes in $\text{GL}(n, q)$ when $M$ is a prime power}

In this section, we work with general elements and assume $M=r^a$, for some prime $r$, and 
$(q,M)=1$. Now, we proceed to construct generating functions for $c(n,q,M)$. For this, we use the 
description of conjugacy classes given by Macdonald~\cite{Ma} which is slightly different from the 
one what we have been using so far, but more convenient for counting. We first define what are known 
as type-$\nu$ conjugacy classes in $\text{GL}(n,q)$, for some partition $\nu\vdash n$. We follow 
Macdonald's (see~\cite{Ma}) exposition here.
\begin{theorem}[\cite{Ma} 1.8, 1.9]
Let $C$ be a conjugacy class of $\text{GL}(n,  q)$. Then, to $C$, we can associate a sequence of 
polynomials $(u_1, u_2, \ldots)$, with the following properties:
\begin{enumerate}
\item $u_i\in \mathbb{F}_q[x]$ with $u_i(0)=1$ for each $i$, and
\item $\displaystyle\sum_{i\geq 1} i  \mathrm{deg}(u_i)=n$.
\end{enumerate}
This data determines $C$ uniquely, and hence gives a one-one correspondence between the 
conjugacy classes of $\text{GL}(n, q)$ with the sequence of polynomials satisfying the two 
properties.
\end{theorem}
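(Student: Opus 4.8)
The statement is classical --- it is rational canonical form repackaged --- and the plan is to deduce it from the structure theorem for finitely generated modules over the principal ideal domain $\mathbb F_q[x]$. First I would recall, as in Section~\ref{Scycle-index}, that a conjugacy class of $\text{GL}(n,q)$ is the same datum as an isomorphism class of pairs $(V,T)$ with $\dim_{\mathbb F_q}V=n$ and $T\in\text{GL}(V)$, i.e.\ of an $\mathbb F_q[x]$-module structure on $\mathbb F_q^n$ on which $x$ acts invertibly. By the elementary-divisor form of the structure theorem, every such module is isomorphic to $\bigoplus_f\bigoplus_j \mathbb F_q[x]/(f^{\lambda_{f,j}})$, the sum running over monic irreducibles $f$ --- invertibility of $T$ forces $f\neq x$, since $x\mid\chi_\alpha$ would make $\alpha$ singular --- with $(\lambda_{f,j})_j$ a partition $\lambda_f$ that is nonempty for only finitely many $f$; the isomorphism class is determined by and determines the assignment $f\mapsto\lambda_f$, which is exactly the combinatorial data $\Delta_\alpha$ of Section~\ref{Scycle-index}. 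So the theorem reduces to producing a bijection between such data and sequences $(u_1,u_2,\ldots)$ satisfying (1) and (2).

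For the bijection I would ``group the elementary divisors by exponent and normalize''. For a monic irreducible $f\neq x$ set $\tilde f:=f(0)^{-1}f$, an irreducible polynomial with $\tilde f(0)=1$ and $\deg\tilde f=\deg f$ (well defined since $f\neq x$ forces $f(0)\neq 0$), and note that $f\mapsto\tilde f$ is a bijection from monic irreducibles $\neq x$ onto irreducible polynomials with constant term $1$. Given $\Delta_\alpha$, write each partition in power notation $\lambda_f=1^{m_1(\lambda_f)}2^{m_2(\lambda_f)}\cdots$ and put
$$ u_i \;:=\; \prod_{f}\tilde f^{\,m_i(\lambda_f)}\qquad(i\geq 1), $$
a finite product; then $u_i(0)=1$, giving (1), while
$$ \sum_{i\geq 1} i\deg(u_i)\;=\;\sum_{i\geq 1} i\sum_f m_i(\lambda_f)\deg(f)\;=\;\sum_f\deg(f)\sum_{i\geq 1}i\,m_i(\lambda_f)\;=\;\sum_f|\lambda_f|\deg(f)\;=\;n $$
gives (2). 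For the inverse, given $(u_1,u_2,\ldots)$ obeying (1) and (2), I would use that $u_i(0)=1$ implies $x\nmid u_i$, so $u_i$ factors uniquely as $\prod_f\tilde f^{\,c_{i,f}}$ over the normalized irreducibles, with exponents $c_{i,f}\geq 0$; setting $m_i(\lambda_f):=c_{i,f}$ produces a partition $\lambda_f$ for each $f$, nonempty for only finitely many $f$ by (2), hence a datum $\Delta$, and the same computation shows $\sum_f|\lambda_f|\deg(f)=n$, so $\Delta$ really is the data of a class in $\text{GL}(n,q)$. Unique factorization in $\mathbb F_q[x]$ makes the two assignments mutually inverse, which is the claimed one-one correspondence.

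The one place that needs care is the normalization: passing from the monic irreducibles appearing in the module decomposition to polynomials with constant term $1$ so that (1) holds, and checking that $f\mapsto f(0)^{-1}f$ is a bijection onto the irreducibles with constant term $1$. (One could equally use the reciprocal polynomial $x^{\deg f}f(1/x)$; that yields a different but equally valid choice of the $u_i$.) I do not anticipate a real obstacle --- the whole content is the PID structure theorem plus the elementary observation that $\alpha$ is invertible precisely when $x$ divides no elementary divisor, which is what the constraint $u_i(0)=1$ records; the weighted-degree identity (2) then falls out of bookkeeping.
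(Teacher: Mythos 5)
Your proposal is correct and is essentially the paper's own treatment: the paper cites Macdonald for the statement and then records exactly your dictionary, setting $u_i(x)=\delta_i\prod_j f_j(x)^{m_i(\lambda_{f_j})}$ with $\delta_i$ normalizing the constant term to $1$, which is the same as your $u_i=\prod_f \tilde f^{\,m_i(\lambda_f)}$ with $\tilde f=f(0)^{-1}f$. Your added detail (the PID structure theorem, the check that invertibility excludes $f=x$, and the weighted-degree bookkeeping for $\sum_i i\deg(u_i)=n$) fills in precisely what the citation to Macdonald covers, with no divergence in method.
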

In Section~\ref{Scycle-index}, to an element $A$ in a conjugacy class $C$ of $\text{GL}(n,q)$, we 
associated a combinatorial data $\Delta_A$, which consists of polynomials $f_j\in \Phi$ and 
partitions $\lambda_{f_j}$ such that $\displaystyle\sum_{j} |\lambda_{f_j}| \mathrm{deg}(f_j)=n$. 
The relation between this data to that of Macdonald's is as follows. Define,
$$u_i(x) = \delta_i  \prod_{j} f_j(x)^{m_i(\lambda_{f_j})}$$
where $\delta_i$ is chosen such that $u_i(0)=1$ and the notation $m_i(\lambda_{f_j})$ is the number 
of times $i$ appears in the partition $\lambda _{f_j}$. Clearly, there are finitely many sequence of 
such non-constant polynomials $u_i$ satisfying the equation $\displaystyle\sum_{i} i 
\mathrm{deg}(u_i)=n$. 
	
Now, given a partition $\nu = 1^{n_1}2^{n_2}\ldots $ of $n$, we say that a {\bf conjugacy class $C$ 
is of type-$\nu$}, if the associated sequence of polynomials, as per Macdonald, $(u_1, u_2, \ldots)$ 
satisfy $\mathrm{deg}(u_i)=n_i$. In terms of the combinatorial data $\Delta_C$, we see that the 
conjugacy class $C$ is of type-$\nu$ if $\displaystyle\sum_{j}m_i(\lambda_{f_j})=n_i$, for all $i$. 
For example, when $\nu=1^n\vdash n$ the conjugacy class of type-$\nu$ has a single polynomial $u_1$ 
of degree $n$, and it corresponds to a semisimple conjugacy class. Let $c_{\nu}$ be the number of 
conjugacy classes of type-$\nu$. Then, $c_{\nu}= \prod_{n_i>0} (q^{n_i}-q^{n_i-1})$, because the 
number of polynomials $u_i\in \mathbb{F}_q[x]$ of degree $n_i$, satisfying $u_i(0)=1$, is 
$q^{n_i}-q^{n_i-1}$. Therefore, we have the number of conjugacy classes in $\text{GL}(n ,q)$ is 
$c(n)=\displaystyle\sum_{\nu \vdash n}c_{\nu} = \sum_{\nu} \prod_{n_i>0} (q^{n_i}-q^{n_i-1})$. This 
gives the generating function for $c(n)$ which is the Equation~\ref{number-cclasses}. 
Now, we determine the number of type-$\nu$ conjugacy classes that are $M^{th}$ powers.  Recall the 
notation: $\Phi_{M, i}$ is the set of all polynomials $f\in \Phi$ with the property that all 
irreducible factors of $f(x^M)$ are of degree $r^i deg(f)$. We also have $\Phi=\bigcup_{i=0}^a 
\Phi_{M, i}$ where $\Phi_{M, 0}$ the set of M-power polynomials.  The 
Proposition~\ref{prime-power-case} can be rephrased in terms of the Macdonald's notation as follows.
\begin{proposition}\label{M-power-cclasses}
Let $M=r^a$ where $r$ is a prime and $(q,M)=1$. Let $\alpha \in \text{GL}(n, q)$, with 
associated Macdonald's data $(u_1, u_2, \ldots)$. Write $u_i(x) = \kappa_i \prod_{j}f_{ij}^{a_{ij}}$ 
as a product of irreducible polynomials $f_{ij}\in \Phi$ and $\kappa_i\in \mathbb F_q$ to make 
$u_i(0)=1$. Then, $X^M=\alpha$ has a solution in $\text{GL}(n, q)$ if and only if, for all $f_{ij}$, 
$f_{ij} \in \Phi_{M, b}$, for some $0\leq b \leq a$, implies $r^b \mid a_{ij}$. 
	\end{proposition}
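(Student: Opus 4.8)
The plan is to reduce Proposition~\ref{M-power-cclasses} directly to Proposition~\ref{prime-power-case} by translating between Macdonald's data $(u_1,u_2,\ldots)$ and the combinatorial data $\Delta_\alpha$ of distinct irreducible polynomials with attached partitions. Recall the translation formula recorded just before the statement: if $\Delta_\alpha$ consists of distinct irreducibles $f$ with partitions $\lambda_f$ written in power notation, then $u_i(x) = \delta_i \prod_{f} f(x)^{m_i(\lambda_f)}$, where $\delta_i$ normalizes the constant term. Thus the exponent $a_{ij}$ of an irreducible $f_{ij}$ in $u_i$ is exactly $m_i(\lambda_{f_{ij}})$, the number of times the part $i$ occurs in the partition $\lambda_{f_{ij}}$ attached to $f_{ij}$ in $\Delta_\alpha$.

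First I would fix an irreducible $f\in\Phi$ appearing somewhere in the factorizations $u_i = \kappa_i\prod_j f_{ij}^{a_{ij}}$, and observe that the full list of multiplicities $\{a_{ij} : f_{ij}=f\}$, indexed by $i$, is precisely the multiset $\{m_i(\lambda_f) : i\ge 1\}$ determining the partition $\lambda_f$. So the condition ``for every $i,j$ with $f_{ij}=f$ in $\Phi_{M,b}$ one has $r^b\mid a_{ij}$'' is word-for-word the condition ``$r^b\mid m_j(\lambda_f)$ for all $j$'' appearing as alternative (2) in Proposition~\ref{prime-power-case}; and $f\in\Phi^M=\Phi_{M,0}$ is alternative (1). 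Since $\Phi = \bigcup_{b=0}^a \Phi_{M,b}$ (disjointly, by Lemma~\ref{M-is-a-power}), each $f$ lies in exactly one $\Phi_{M,b}$, so the two alternatives in Proposition~\ref{prime-power-case} are exhaustively captured by the single uniform statement in the present proposition. Running over all irreducible $f$ occurring in $\Delta_\alpha$ (equivalently, all distinct $f_{ij}$), the criterion of Proposition~\ref{prime-power-case} holds for every $f$ if and only if the criterion of the present proposition holds for every $f_{ij}$.

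Then I would invoke Proposition~\ref{prime-power-case} to conclude: $X^M=\alpha$ has a solution in $\text{GL}(n,q)$ if and only if for each $f$ in $\Delta_\alpha$ either $f\in\Phi^M$, or $f\in\Phi_{M,b}$ for some $1\le b\le a$ with $r^b\mid m_j(\lambda_f)$ for all $j$; by the dictionary of the previous paragraph this is exactly the assertion that for all $f_{ij}$, membership $f_{ij}\in\Phi_{M,b}$ forces $r^b\mid a_{ij}$ (the case $b=0$ being vacuous, since $r^0=1$ always divides). One small point to be careful about: $\Delta_\alpha$ records only those $f$ on which the partition is nonempty, i.e.\ those $f$ actually dividing the characteristic polynomial, and these are exactly the $f$ that occur among the $f_{ij}$; the normalizing scalars $\delta_i,\kappa_i$ play no role in the divisibility conditions. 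There is essentially no obstacle here — the content is entirely in Proposition~\ref{prime-power-case} and in Macdonald's reindexing — so the ``hard part'' is merely stating the bookkeeping cleanly enough that the equivalence of the two phrasings is transparent.
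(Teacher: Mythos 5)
Your proposal is correct and follows exactly the paper's own argument: translate Macdonald's data back to the combinatorial data $\Delta_\alpha$ via the identification $a_{ij}=m_i(\lambda_{f_{ij}})$ and then apply Proposition~\ref{prime-power-case}, noting that the $b=0$ case is vacuous. The only difference is that you spell out the bookkeeping more explicitly than the paper does.
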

\begin{proof}
We write each $u_i(x)=\kappa_i \prod_{j}f_{ij}^{a_{ij}}$ as a product of irreducibles. Then the 
set $f_{ij}$ and the corresponding powers $m_i(\lambda_{f_{ij}})=a_{ij}$ give back the combinatorial 
data $\Delta_{\alpha}$. The result follows from Proposition~\ref{prime-power-case}. 
	\end{proof}
Note the subtle difference between this proposition and the semisimple case 
(Proposition~\ref{M-power-ssclass}). In the present case, we require that $r^b$ divides multiplicity 
of each part appearing in the partitions. In general, it is not true that $X^M=\alpha$ has a 
solution in $\text{GL}(n,q)$ if and only if $Y^M=\alpha_s$ has a solution where $\alpha_s$ is the 
semisimple part of $\alpha$. 
	\begin{example}
Take $\alpha=\begin{psmallmatrix} \lambda_1 & 1 & \\ &\lambda_1 & \\ 
&&\lambda_2\end{psmallmatrix}\in \text{GL}(3,q)$ and $M=2$. Then, $X^M=\alpha$ has a solution in 
$\text{GL}(3,q)$ if and only if $\lambda_1, \lambda_2\in {\mathbb F_q^*}^2$. However, $Y^2= 
\alpha_s=\begin{psmallmatrix} \lambda_1 &  & \\ &\lambda_1 & \\ & & \lambda_2 \end{psmallmatrix}$ 
has solution if and only if $\lambda_2 \in {\mathbb F_q^*}^2$, because $\begin{psmallmatrix} & 
\lambda_1 \\ 1& \end{psmallmatrix}^ 2 = \begin{psmallmatrix} \lambda_1 & \\ &\lambda_1 
\end{psmallmatrix}$.
	\end{example}
Now, we write the generating function for $c(n,q,M)$. We begin with (see~\cite[Lemma 2]{FF}),
\begin{lemma}
Let $f(u)=1+\sum_{n=1}^{\infty}a_n u^n$. Suppose $\nu =1^{n_1}2^{n_2}\ldots$ is a partition of 
$n$. Define $b_n = \displaystyle \sum_{\nu \vdash n} \left(\prod_{n_i > 0} a_{n_i}\right)$. Then, 
$$1 + \sum_{n=1}^{\infty} b_n u^n = \prod_{t=1}^{\infty} f(u^t).$$
\end{lemma}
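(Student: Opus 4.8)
The plan is to expand the infinite product on the right-hand side and match the coefficient of $u^n$ with the definition of $b_n$.

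First I would check that $\prod_{t\geq 1} f(u^t)$ is a well-defined formal power series in $u$. Since $f(u^t)=1+\sum_{m\geq 1}a_m u^{tm}$ differs from $1$ only in degrees $\geq t$, any fixed coefficient of $u^n$ in the product is unaffected by the factors with $t>n$; hence the partial products $\prod_{t=1}^{T}f(u^t)$ stabilize in each degree as $T\to\infty$, and the infinite product is legitimate.

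Next I would extract the coefficient of $u^n$. A monomial in the expansion of $\prod_{t\geq 1}f(u^t)=\prod_{t\geq 1}\bigl(1+\sum_{m\geq 1}a_m u^{tm}\bigr)$ is obtained by choosing, for each $t\geq 1$, either the constant term $1$ or a term $a_{m_t}u^{t m_t}$ with $m_t\geq 1$, where all but finitely many choices are the constant term. Recording a choice of the constant term by $m_t=0$, such a selection is encoded by a sequence $(m_1,m_2,\ldots)$ of non-negative integers, almost all zero, and it contributes the monomial $\bigl(\prod_{t:\,m_t>0}a_{m_t}\bigr)\,u^{\sum_{t\geq 1} t m_t}$. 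The selections contributing to the coefficient of $u^n$ are exactly those with $\sum_{t\geq 1} t m_t=n$. But a sequence of non-negative integers $(m_1,m_2,\ldots)$, almost all zero, with $\sum_t t m_t=n$ is precisely the multiplicity data of a partition $\nu=1^{m_1}2^{m_2}\cdots$ of $n$ written in power notation, i.e.\ $m_t=n_t$. Summing the contributions over all such $\nu$ gives that the coefficient of $u^n$ equals $\sum_{\nu\vdash n}\prod_{n_i>0}a_{n_i}=b_n$, while the degree-$0$ term is $1$, coming from the empty selection. This yields $1+\sum_{n\geq 1}b_n u^n=\prod_{t\geq 1}f(u^t)$.

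The argument is entirely routine; the only points requiring any care are the justification that the infinite product is a genuine formal power series and the bookkeeping that identifies a choice of terms in the product with a partition of $n$ in power notation. I do not expect a real obstacle here.
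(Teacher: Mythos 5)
Your proposal is correct and follows the same route as the paper, which simply says the lemma "follows by computing the coefficients of $u^n$ on both sides"; you have carried out that coefficient comparison explicitly, including the (routine but worth noting) check that the infinite product stabilizes degree by degree. No issues.
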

\begin{proof}
The Lemma follows simply by computing the coefficients of $u^n$ on both sides.
\end{proof}
\noindent  We have the following,
\begin{theorem}\label{Theorem-AE}
Let $M=r^a$, where $r$ is a prime, and $(q,M)=1$. Then we have the following generating 
function,
$$1 + \sum_{n=1}^{\infty} c(n,q,M) u^n = \prod_{j=1}^{\infty} \prod_{i=0}^{a} \prod_{d\geq 1} 
(1-u^{jr^id})^{-N_M^i(q,d)}.$$
\end{theorem}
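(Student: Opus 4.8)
The plan is to combine the combinatorial criterion for when a conjugacy class is an $M^{th}$ power (Proposition~\ref{M-power-cclasses}) with the counting lemma just stated and the decomposition $\Phi = \bigcup_{i=0}^a \Phi_{M,i}$. The starting point is to count, for a fixed partition $\nu = 1^{n_1}2^{n_2}\cdots \vdash n$, the number of type-$\nu$ conjugacy classes which are $M^{th}$ powers; call this $c_\nu(M)$. By Proposition~\ref{M-power-cclasses}, a type-$\nu$ class with Macdonald data $(u_1,u_2,\ldots)$ is an $M^{th}$ power precisely when, in the factorisation $u_i = \kappa_i\prod_j f_{ij}^{a_{ij}}$ into irreducibles, every irreducible factor lying in $\Phi_{M,b}$ occurs with multiplicity divisible by $r^b$. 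Since the conditions on the $u_i$ are independent across $i$, we have $c_\nu(M) = \prod_{n_i>0} a_{n_i}$ where $a_m$ denotes the number of polynomials $u \in \mathbb F_q[x]$ of degree $m$ with $u(0)=1$ such that every irreducible factor of $u$ in $\Phi_{M,b}$ appears with multiplicity a multiple of $r^b$ (and $a_0 = 1$).

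Next I would compute the generating function $f(u) = 1 + \sum_{m\geq 1} a_m u^m$. Because a monic polynomial with nonzero constant term (equivalently, $u(0)=1$ after normalisation) is an unordered product of prime powers $f^{k}$ with $f \in \Phi$, and the allowed exponents $k$ for $f \in \Phi_{M,i}$ are exactly the nonnegative multiples of $r^i$, the Euler-product factorisation over $\Phi = \bigcup_{i=0}^a \Phi_{M,i}$ gives
\begin{equation*}
f(u) = \prod_{i=0}^a \prod_{f \in \Phi_{M,i}} \bigl(1 + u^{r^i\deg f} + u^{2r^i\deg f} + \cdots\bigr) = \prod_{i=0}^a \prod_{d\geq 1} (1 - u^{r^i d})^{-N_M^i(q,d)},
\end{equation*}
using that $|\{f \in \Phi_{M,i} : \deg f = d\}| = N_M^i(q,d)$ by the definition preceding Proposition~\ref{nmiqd}. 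Then $c(n,M) = \sum_{\nu \vdash n} c_\nu(M) = \sum_{\nu \vdash n}\prod_{n_i>0} a_{n_i}$, which is exactly the quantity $b_n$ produced by the counting Lemma applied to this $f$. Hence
\begin{equation*}
1 + \sum_{n=1}^\infty c(n,M)\,u^n = \prod_{t=1}^\infty f(u^t) = \prod_{t=1}^\infty \prod_{i=0}^a \prod_{d\geq 1} (1 - u^{t r^i d})^{-N_M^i(q,d)},
\end{equation*}
which is the claimed formula after renaming $t = j$.

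The only real subtlety — and the step I would be most careful about — is the application of Proposition~\ref{M-power-cclasses}: one must check that the $M^{th}$-power condition genuinely decouples polynomial-by-polynomial and factor-by-factor, so that the count of admissible $u_i$'s is multiplicative over distinct irreducibles and independent across the index $i$ of the Macdonald sequence. This is where the example following Proposition~\ref{M-power-cclasses} is a cautionary note: the criterion is about multiplicities of \emph{all} parts, not just the semisimple part, but it is still a local condition on each $(f_{ij}, a_{ij})$ pair, so the Euler-product manipulation is valid. Everything else — the Euler product for $f(u)$, and the passage from $f(u)$ to $\prod_t f(u^t)$ via the Lemma — is routine bookkeeping with formal power series.
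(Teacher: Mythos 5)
Your proposal is correct and follows essentially the same route as the paper: decompose by type-$\nu$, observe via Proposition~\ref{M-power-cclasses} that the condition factors over the Macdonald sequence so that $c_{\nu,M}=\prod_{n_i>0}a_{n_i}$, and apply the Feit--Fine counting lemma. The only cosmetic difference is that you recompute the Euler product for $f(u)$ directly, whereas the paper identifies $a_{n_i}$ with $c(n_i,M)_{ss}$ and cites Theorem~\ref{gen-fun-primepower}(1), which is the same calculation.
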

\begin{proof}
Let $c_{\nu, M}$ denote the number of type-$\nu$ conjugacy classes that are $M^{th}$-powers.
For a partition $\nu=1^{n_1}2^{n_2}\ldots$ of $n$, from Proposition~\ref{M-power-cclasses} we 
have 
$$c_{\nu,M}=\prod\limits_{n_i>0}c(n_i,q,M)_{\s}$$
where $n_i$ represent $deg(u_i)$.
Now, 
$$ c(n,q,M)=\sum_{\nu\vdash n} c_{\nu,M} = \sum_{\nu\vdash n} 
\left(\prod\limits_{n_i>0}c(n_i,q,M)_{\s} \right).
$$
We apply the previous Lemma by taking $a_n = c(n,q,M)_{\s}$, thus 
$f(u)=\prod_{i=0}^{a}\prod_{d\geq 1} (1-u^{r^id})^{-N_M^i(q,d)}$ is the generating function for 
$M$-power semisimple classes (by Theorem~\ref{gen-fun-primepower}). Thus, $b_n=c(n,q,M)$ and we get,
$$1+\sum\limits_{n=1}^{\infty}c(n,q,M)u^n = \prod_{t=1}^{\infty}f(u^t)$$
which gives the required result.
\end{proof}
\begin{corollary}\label{gen-function-all-classes-prime}
Let $M$ be a prime and $(q,M)=1$. Then we have,
$$ 1 + \sum_{n=1}^{\infty} c(n,q,M) u^n = \prod_{j=1}^{\infty}\left( 
\left(\frac{1-u^{Mj}}{1-qu^{Mj}}\right)\prod_{d\geq 1}(1+u^{jd}+u^{2jd}+\ldots 
+u^{jd(M-1)})^{N_M(q,d)}\right).$$
\end{corollary}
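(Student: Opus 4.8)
The plan is to deduce this from Theorem~\ref{Theorem-AE} by specialising to $a=1$ (so $M=r$), or --- what amounts to the same thing and is perhaps cleaner --- to apply the Lemma just before Theorem~\ref{Theorem-AE} (\cite[Lemma 2]{FF}) directly to the semisimple generating function of the preceding Corollary. Indeed, by Proposition~\ref{M-power-cclasses} the number of type-$\nu$ conjugacy classes that are $M^{th}$ powers, for $\nu=1^{n_1}2^{n_2}\cdots$, is $c_{\nu,M}=\prod_{n_i>0}c(n_i,M)_{ss}$, so $c(n,M)=\sum_{\nu\vdash n}\prod_{n_i>0}c(n_i,M)_{ss}$. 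This is exactly the situation of that Lemma with $a_{n}=c(n,M)_{ss}$, whence $1+\sum_{n\geq1}c(n,M)u^n=\prod_{t\geq1}f(u^t)$ where $f(u)=1+\sum_{n\geq1}c(n,M)_{ss}u^n$.

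Next I would substitute for $f$ the closed form from the previous Corollary, namely $f(u)=\left(\frac{1-u^{M}}{1-qu^{M}}\right)\prod_{d\geq1}\left(1+u^{d}+\cdots+u^{d(M-1)}\right)^{N_M(q,d)}$, and read off $f(u^t)$ by replacing $u$ with $u^t$ everywhere. Taking the product over $t\geq1$ and relabelling $t$ as $j$ then produces the asserted product over $j$. As a consistency check one can run the same computation straight from Theorem~\ref{Theorem-AE}: with $a=1$ only the indices $i=0,1$ occur, and using $N_M^1(q,d)=N(q,d)-N_M(q,d)$ (the relation recorded before Proposition~\ref{nmiqd}) together with $\frac{1-u^{jMd}}{1-u^{jd}}=1+u^{jd}+\cdots+u^{jd(M-1)}$ and the identity $\prod_{d\geq1}(1-v^d)^{-N(q,d)}=\frac{1-v}{1-qv}$ from Section~\ref{Scycle-index} (applied with $v=u^{jM}$) one recovers the same expression.

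There is essentially no obstacle here beyond careful bookkeeping: the whole argument is just the previous Corollary fed into a formal power series identity. The only thing to watch is the order of operations in the exponent manipulation when one argues via Theorem~\ref{Theorem-AE} --- one must substitute $N_M^1=N-N_M$ and separate out the $(1-u^{jMd})^{-N(q,d)}$ part before invoking the closed form for the $N(q,d)$-product, so that the factor $\frac{1-u^{jM}}{1-qu^{jM}}$ is extracted correctly and the remaining factor assembles into $\prod_{d\geq1}\left(1+u^{jd}+\cdots+u^{jd(M-1)}\right)^{\pm N_M(q,d)}$.
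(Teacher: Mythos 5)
Your method is exactly the intended one (the paper prints no proof of this corollary, and the derivation is precisely Theorem~\ref{Theorem-AE} with $a=1$, or equivalently the Feit--Fine lemma applied to the semisimple corollary). However, your claim that the computation ``produces the asserted product over $j$'' is not quite right, and your hedge with ``$\pm N_M(q,d)$'' in the last paragraph papers over a real discrepancy that you should resolve rather than leave open. Both of your routes give the exponent $+N_M(q,d)$: substituting $u\mapsto u^t$ into
$f(u)=\left(\frac{1-u^{M}}{1-qu^{M}}\right)\prod_{d\geq1}\bigl(1+u^{d}+\cdots+u^{d(M-1)}\bigr)^{N_M(q,d)}$
and multiplying over $t$ manifestly preserves the positive exponent; and from Theorem~\ref{Theorem-AE} one gets
$\prod_{d}(1-u^{jd})^{-N_M(q,d)}(1-u^{jMd})^{-(N(q,d)-N_M(q,d))}
=\frac{1-u^{jM}}{1-qu^{jM}}\prod_{d}\left(\frac{1-u^{jMd}}{1-u^{jd}}\right)^{N_M(q,d)}$,
again with $+N_M(q,d)$ after writing $\frac{1-u^{jMd}}{1-u^{jd}}=1+u^{jd}+\cdots+u^{jd(M-1)}$. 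The statement as printed has $-N_M(q,d)$, which is a sign error in the paper: a quick check at $n=1$, $M=2$, $q=3$ gives $c(1,2)=1$ (only $1$ is a square in $\mathbb F_3^*$), matching the coefficient of $u$ in $(1+u)^{+N_2(3,1)}=(1+u)^1$ but not in $(1+u)^{-1}$. So your argument is sound, but its correct conclusion is the formula with exponent $+N_M(q,d)$; you should state that explicitly instead of asserting agreement with the printed $-N_M(q,d)$.
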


\section{Generating functions when $M$ is a prime and $(M,q)=1$}\label{generating-function-prime}

When $M$ is a prime and $(M,q)=1$, several generating functions seen in the earlier sections can be 
further simplified. We do that in this section. 
Recall that $\mathfrak{M}(M,q)$ is the order of $q$ in $\Z/M\Z^{\times}$. In this section we will 
denote $\mathfrak{M}(M,q)$ by $t$ to make formula look cleaner. 
We begin with some identities.
\begin{lemma}\label{recursive-reln}
Suppose $k\geq 1$ and $M\nmid d$. Then,
$$N(q^{M^k},d) = M^kN(q,M^kd)+N(q^{M^{k-1}},d).$$
\end{lemma}
\begin{proof}
We have,
\begin{eqnarray*}
N(q^{M^k},d) &=& \frac{1}{d}\sum_{r\mid d}\mu(r)((q^{M^k})^{d/r}-1) \\ &=& 
\frac{1}{d}\left(\sum_{r\mid M^kd}\mu(r)(q^{\frac{M^kd}{r}}-1)-\sum_{\substack{Mr\mid M^kd \\ 
(r,M)=1}}\mu(Mr)(q^{\frac{M^kd}{Mr}}-1)\right) \\ &=& \frac{1}{d}\left(\sum_{r\mid 
M^kd}\mu(r)(q^{\frac{M^kd}{r}}-1)+\sum_{r\mid d}\mu(r)((q^{M^{k-1}})^{d/r}-1)\right)\\ &=& 
M^k\left(\frac{1}{M^kd}\sum_{r\mid 
M^kd}\mu(r)(q^{\frac{M^kd}{r}}-1)\right)+\left(\frac{1}{d}\sum_{r\mid 
d}\mu(r)((q^{M^{k-1}})^{d/r}-1)\right) \\ &=& M^kN(q,M^kd)+N(q^{M^{k-1}},d).
\end{eqnarray*}
This completes the proof.
\end{proof}
\begin{lemma}\label{gen-fun-nhatqd}
Let $M\geq 2$ be a prime with $(M,q)=1$. Let $t$ be the order of $q$ in $\Z/M\Z^{\times}$. Then,
$$\displaystyle \prod\limits_{d\geq 
1}(1-u^d)^{-N_M(q,d)}=\left(\frac{1-u}{1-qu}\right)\prod_{k=0}^{\infty}\left(\frac{1-u^{M^kt}}{
1-q^tu^{M^kt}}\right)^{\frac{1-M}{M^{k+1}t}}=c(q,u)\prod\limits_{k=0}^{\infty} 
c(q^t,u^{M^kt})^{\frac{1-M}{M^{k+1}t}}.$$
\end{lemma}
\begin{proof} From Equation~\ref{number-rgclasses},
\begin{eqnarray*} 
\displaystyle \prod_{d\geq 1}(1-u^d)^{-N_M(q,d)} &=& \prod_{d\geq 
1}(1-u^d)^{-N(q,d)}\prod_{d\geq 1}(1-u^d)^{N(q,d)-N_M(q,d)} \\ &=& c(q,u)\prod_{d\geq 
1}(1-u^d)^{N(q,d)-N_M(q,d)}.
\end{eqnarray*}
Now, using Corollary~\ref{final-reln-nmqd-nqd}, 
\begin{eqnarray*}
\prod\limits_{d\geq 1}(1-u^d)^{N(q,d)-N_M(q,d)} &=& \prod_{k=0}^{\infty} \prod_{M\nmid 
d}(1-u^{M^ktd})^{N(q,M^ktd)-N_M(q,M^ktd)} 
\end{eqnarray*}
\begin{eqnarray*}
\displaystyle &=& \prod\limits_{k=0}^{\infty}\prod\limits_{M\nmid 
d}(1-u^{M^ktd})^{\frac{M-1}{M^{k+1}t}N(q^{M^ktd},d)} = 
\left[\prod\limits_{k=0}^{\infty}\prod\limits_{M\nmid 
d}(1-u^{M^ktd})^{\frac{N(q^{M^ktd},d)}{M^k}}\right]^{\frac{M-1}{Mt}} \\ &=& 
\left[\prod\limits_{M\nmid 
d}(1-u^{td})^{N(q^t,d)}\right]^{\frac{M-1}{Mt}}\left[\prod\limits_{k=1}^{\infty}\prod\limits_{M\nmid 
d}(1-u^{M^ktd})^{\frac{N(q^{M^ktd},d)}{M^k}}\right]^{\frac{M-1}{Mt}} \\ 
&=& \displaystyle \left[\prod\limits_{M\nmid 
d}(1-u^{td})^{N(q^t,d)}\right]^{\frac{M-1}{Mt}}\left[\prod\limits_{k=1}^{\infty}\prod\limits_{M\nmid 
d}(1-u^{M^ktd})^{N(q^t,M^ktd)+\frac{N(q^{M^{k-1}t},d)}{M^k}}\right]^{\frac{M-1}{Mt}}.
\end{eqnarray*}
The last equality follows from Lemma~\ref{recursive-reln}, where we replace $q$ by $q^t$.
Therefore, we get
\begin{eqnarray*}
&&\prod\limits_{d\geq 1}(1-u^d)^{N(q,d)-N_M(q,d)}\\
&=& 
\left[\prod\limits_{k=0}^{\infty}(1-u^{M^ktd})^{-N(q^t,M^ktd)}\right]^{\frac{1-M}{Mt}}\left[
\prod\limits_{k=1}^{\infty}\prod\limits_{M\nmid 
d}(1-u^{M^ktd})^{\frac{N(q^{M^{k-1}t},d)}{M^k}}\right]^{\frac{M-1}{Mt}} \\ &=& \left(\prod_{d\geq 1} 
(1-u^{td})^{-N(q^t,td)} \right)^{\frac{1-M}{Mt}} 
\left[\prod\limits_{k=1}^{\infty}\prod\limits_{M\nmid 
d}(1-u^{M^ktd})^{\frac{N(q^{M^{k-1}t},d)}{M^{k-1}}}\right]^{\frac{M-1}{M^2t}} \\ &=& 
\left(\frac{1-u^t}{1-q^tu^t}\right)^{\frac{1-M}{Mt}}\left[\prod\limits_{k=1}^{\infty}\prod\limits_{
M\nmid d}(1-u^{M^ktd})^{\frac{N(q^{M^{k-1}t},d)}{M^{k-1}}}\right]^{\frac{M-1}{M^2t}}.
\end{eqnarray*}
Again, applying Lemma~\ref{recursive-reln}, and following the steps as above we get,
\begin{eqnarray*}
&&\prod\limits_{d\geq1}(1-u^d)^{N(q,d)-N_M(q,d)}\\
&=& \left(\frac{1-u^t}{1-q^tu^t}\right)^{\frac{1-M}{Mt}}\left(\frac{1-u^
{Mt}}{1-q^tu^{Mt}}\right)^{\frac{1-M}{M^2t}}\left[\prod\limits_{k=2}^{\infty}\prod\limits_{M\nmid 
d}(1-u^{M^ktd})^{\frac{N(q^{M^{k-2}t},d)}{M^{k-2}}}\right]^{\frac{M-1}{M^3t}}.
\end{eqnarray*}
Inductively,  we conclude
\begin{eqnarray*}
\prod\limits_{d\geq 1}(1-u^d)^{N(q,d)-N_M(q,d)} &=& 
\prod_{k=0}^{\infty}\left(\frac{1-u^{M^kt}}{1-q^tu^{M^kt}}\right)^{\frac{1-M}{M^{k+1}t}} = 
\prod\limits_{k=0}^{\infty} c(q^t,u^{M^kt})^{\frac{1-M}{M^{k+1}t}}.
\end{eqnarray*}
This completes the proof.
\end{proof}
\subsection{Generating function for regular and regular semisimple conjugacy classes}

We now simplify Theorem~\ref{Theorem-rs-rgCC} when $M$ is a prime.
\begin{theorem}\label{gen_func_reg_regsem_class_revisited}
Let $M\geq 2$ be a prime and $(M,q)=1$. Let $t$ be the order of $q$ in $\Z/M\Z^{\times}$. Then,
\begin{enumerate}
\item $1+\sum\limits_{n=1}^{\infty}c(n,q,M)_{\rg}u^n=c(q,u)\prod\limits_{k=0}^{\infty} 
c(q^t,u^{M^kt})^{\frac{1-M}{M^{k+1}t}}$,
\item $1+\sum\limits_{n=1}^{\infty}c(n,q,M)_{\rs}u^n=s(q,u)\prod\limits_{k=0}^{\infty} 
s(q^t,u^{M^kt})^{\frac{1-M}{M^{k+1}t}}$
\end{enumerate}
where $c(q,u)$ and $s(q,u)$ are given by Equation~\ref{number-rgclasses} and, 
Equation~\ref{number-rsclasses} respectively.
\end{theorem}
\begin{proof} (1) follows from Theorem~\ref{Theorem-rs-rgCC} and Lemma~\ref{gen-fun-nhatqd}.

For (2), from Theorem~\ref{Theorem-rs-rgCC} we have,
\begin{eqnarray*}
1+\sum\limits_{n=1}^{\infty}c(n,q,M)_{\rs}u^n &=& \prod_{d\geq 
1}(1+u^d)^{N_M(q,d)}=\frac{\prod\limits_{d\geq 1}(1-u^{2d})^{N_M(q,d)}}{\prod\limits_{d\geq 
1}(1-u^d)^{N_M(q,d)}} \\ &=& \frac{c(q,u)\prod\limits_{k=0}^{\infty} 
c(q^t,u^{M^kt})^{\frac{1-M}{M^{k+1}t}}}{c(q,u^2)\prod\limits_{k=0}^{\infty} 
c(q^t,u^{2M^kt})^{\frac{1-M}{M^{k+1}t}}}=s(q,u)\prod\limits_{k=0}^{\infty} 
s(q^t,u^{M^kt})^{\frac{1-M}{M^{k+1}t}}.
\end{eqnarray*}
The last equality follows since $\displaystyle s(q,u)=\frac{c(q,u)}{c(q,u^2)}.$
\end{proof}

We give some examples how to explicitly get the coefficients once we know the generating functions. 
One can use some computer algebra system, such as SAGEMATH to do this.
\begin{example}
Let us take $M=3$, and $q\equiv 2 \imod 3$. Therefore, $t=2$ and by the theorem above we have
\begin{eqnarray*}
1+\sum_{n=1}^{\infty}c(n,q,3)_{\rg}u^n &=& c(q,u)\prod\limits_{k=0}^{\infty} 
c(q^2,u^{2.3^k})^{-\frac{1}{3^{k+1}}}\\ &=& 
\left(\frac{1-u}{1-qu}\right)\left(\frac{1-u^2}{1-q^2u^2}\right)^{-\frac{1}{3}}\left(\frac{1-u^6}{
1-q^2u^6}\right)^{-\frac{1}{9}}\prod\limits_{k=2}^{\infty} c(q^2,u^{2.3^k})^{-\frac{1}{3^{k+1}}}.
\end{eqnarray*}
\noindent Similarly, 
\begin{eqnarray*}
&& 1+\sum_{n=1}^{\infty}c(n,q,3)_{\rs}u^n = s(q,u)\prod\limits_{k=0}^{\infty} 
s(q^2,u^{2.3^k})^{-\frac{1}{3^{k+1}}} \\ &=& \left(\frac{1-qu^2}{(1+u)(1-qu)}\right)
\left(\frac{1-q^2u^4}{(1+u^2)(1-q^2u^2)}\right)^{-\frac{1}{3}}\left(\frac{1-q^2u^{12}}{
(1+u^6)(1-q^2u^6)}\right)^{-\frac{1}{9}}\prod\limits_{k=2}^{\infty} 
s(q^2,u^{2.3^k})^{-\frac{1}{3^{k+1}}}.
\end{eqnarray*}
\noindent We make a table which records $c(n,q,3)_{\rg}$ for some small values of $n$. We 
know, $c(n,q)_{\rg}=q^n-q^{n-1}$.
\vspace{0.1 cm}
\begin{table}[ht]
\renewcommand{\arraystretch}{1.2}
\centering
\begin{tabular}{|c|c|c|}
\hline 
$n$ & Number of regular classes & Number of $3^{rd}$ power regular classes\\
\hline
1 & $q-1$ & $q-1$\\ \hline
2 & $q^2-q$ & $\frac{2}{3}q^2-q+\frac{1}{3}$\\ \hline
3 & $q^3-q^2$ & $\frac{2}{3}q^3-\frac{2}{3}q^2+\frac{1}{3}q-\frac{1}{3}$\\ \hline
4 & $q^4-q^3$ & $\frac{5}{9}q^4-\frac{2}{3}q^3+\frac{2}{9}q^2 -\frac{1}{3}q+\frac{2}{9}$\\ \hline
5 & $q^5-q^4$ & $\frac{5}{9}q^5-\frac{5}{9}q^4+\frac{2}{9}q^3 
-\frac{2}{9}q^2+\frac{2}{9}q-\frac{2}{9}$\\ \hline
6 & $q^6-q^5$ & 
$\frac{40}{81}q^6-\frac{5}{9}q^5+\frac{5}{27}q^4-\frac{2}{9}q^3+\frac{1}{27}q^2-\frac{2}{9}q+\frac{ 
23}{81}$\\ \hline
\end{tabular}
\vspace{.5 cm}
\caption{\label{table1} Table for $c(n,q,3)_{\rg}$ when $q\equiv 2 \imod 3$.}
\end{table}
\noindent We now make a table which records $c(n,q,3)_{\rs}$ for some small values of $n$ along 
with $c(n,q)_{\rs}$ (see the formula before Equation~\ref{number-rsclasses}) for comparisons. 

\vskip5mm 
\begin{longtable}{|c|c|c|}
\hline 
$n$ & Number of regular semisimple classes & Number of $3^{rd}$ power regular semisimple classes\\ 
\hline
1 & $q-1$ & $q-1$\\ \hline
2 & $q^2-2q+1$ & $\frac{2}{3}q^2-2q+\frac{4}{3}$\\ \hline
3 & $q^3-2q^2+2q-1$ & $\frac{2}{3}q^3-\frac{5}{3}q^2+\frac{7}{3}q-\frac{4}{3}$\\ \hline
4 & $q^4-2q^3+2^2q-2q+1$ & $\frac{5}{9}q^4-\frac{4}{3}q^3+\frac{20}{9}q^2 
-\frac{8}{3}q+\frac{11}{9}$\\ \hline
5 & $q^5-2q^4+2q^3-2q^2+2q-1$ & $\frac{5}{9}q^5-\frac{11}{9}q^4+\frac{17}{9}q^3 
-\frac{23}{9}q^2+\frac{23}{9}q-\frac{11}{9}$\\ \hline
6 & $q^6-2q^5+2q^4-2q^3+2q^2-2q+1$ & 
$\frac{40}{81}q^6-\frac{10}{9}q^5+\frac{44}{27}q^4-\frac{22}{9}q^3+\frac{67}{27}q^2-\frac{22}{9}
q+\frac{113}{81}$\\ \hline
\caption{Table for $c(n,q,3)_{\rs}$ when $q\equiv2 \imod 3$.}
\label{table2}
\end{longtable}
\end{example}
\subsection{Generating function for $M^{th}$ power semisimple and all conjugacy classes} 

\begin{theorem}\label{gen_func_semisimple_class_revisited}
Let $M\geq 2$ be prime and $(M,q)=1$. Let $t$ be the order of $q$ in $\Z/M\Z^{\times}$. Then,
$$1+\sum\limits_{n=1}^{\infty}c(n,q,M)_{\s}u^n=c(q,u)c(q^t,u^t)^{\frac{1-M}{Mt}}\prod\limits_{k=1}^{
\infty} c(q^t,u^{M^kt})^{\frac{(1-M)^2}{M^{k+1}t}}.$$
\end{theorem}
\begin{proof}
From  Theorem~\ref{gen-fun-primepower} and Lemma~\ref{gen-fun-nhatqd}, we have
\begin{eqnarray*}
&&\displaystyle 1+\sum_{n=1}^{\infty}c(n,q,M)_{ss}u^n =
\left(\frac{1-u^M}{1-qu^M}\right)\prod_{d\geq 1}\left(\frac{1-u^{Md}}{1-u^d} \right)^{N_M(q,d)} \\ 
&=& c(q,u^M).\frac{c(q,u)\prod\limits_{k=0}^{\infty} 
c(q^t,u^{M^kt})^{\frac{1-M}{M^{k+1}t}}}{c(q,u^M)\prod\limits_{k=0}^{\infty} 
c(q^t,u^{M^{k+1}t})^{\frac{1-M}{M^{k+1}t}}}
= c(q,u)c(q^t,u^t)^{\frac{1-M}{Mt}}.\frac{\prod\limits_{k=1}^{\infty} 
c(q^t,u^{M^kt})^{\frac{1-M}{M^{k+1}t}}}{\prod\limits_{k=1}^{\infty} 
c(q^t,u^{M^kt})^{\frac{1-M}{M^kt}}} 
\\&=& c(q,u)c(q^t,u^t)^{\frac{1-M}{Mt}}\prod\limits_{k=1}^{\infty} 
c(q^t,u^{M^kt})^{\frac{(1-M)^2}{M^{k+1}t }}.
\end{eqnarray*}
\end{proof}
\begin{example}
Let us take $M=3$ and $q\equiv 2 \imod 3$. Then, $t=2$ and by the above theorem, 
\begin{eqnarray*}
1+\sum_{n=1}^{\infty}c(n,q,3)_{\s}u^n &=& 
c(q,u)c(q^2,u^2)^{-\frac{1}{3}}\prod\limits_{k=1}^{\infty} 
\left(c(q^2,u^{3^kt})\right)^{\frac{2}{3^{k+1}}} \\ &=& 
\left(\frac{1-u}{1-qu}\right)\left(\frac{1-u^2}{1-q^2u^2}\right)^{-\frac{1}{3}}\left(\frac{1-u^6}{ 
1-q^2u^6}\right)^{\frac{2}{9}}\prod\limits_{k=2}^{\infty} c(q^t,u^{3^kt})^{\frac{2}{3^{k+1}}}
\end{eqnarray*}
\noindent We make a table to compute these classes for some small value of $n$. Recall 
$c(n)_{ss}=q^n-q^{n-1}$.
\begin{table}[ht]
\renewcommand{\arraystretch}{1.2}
\centering
\begin{tabular}{|c|c|c|}
\hline 
$n$ & Number of semisimple classes & Number of $3^{rd}$ power semisimple classes\\ \hline
1 & $q-1$ & $q-1$\\ \hline
2 & $q^2-q$ & $\frac{2}{3}q^2-q+\frac{1}{3}$\\ \hline
3 & $q^3-q^2$ & $\frac{2}{3}q^3-\frac{2}{3}q^2+\frac{1}{3}q-\frac{1}{3}$\\ \hline
4 & $q^4-q^3$ & $\frac{5}{9}q^4-\frac{2}{3}q^3+\frac{2}{9}q^2 
-\frac{1}{3}q+\frac{2}{9}$\\ \hline
5 & $q^5-q^4$ & $\frac{5}{9}q^5-\frac{5}{9}q^4+\frac{2}{9}q^3 
-\frac{2}{9}q^2+\frac{2}{9}q-\frac{2}{9}$\\ \hline
6 & $q^6-q^5$ & 
$\frac{40}{81}q^6-\frac{5}{9}q^5+\frac{5}{27}q^4-\frac{2}{9}q^3+\frac{10}{27}q^2-\frac{2}{9}q-\frac{
4}{81}$\\ \hline
\end{tabular}
\vspace{0.4cm}
\caption{\label{table3} Table for $c(n,q,3)_{\s}$ for $q\equiv 2 \imod 3$.}
\end{table}
\end{example}

Finally, we have the generating function for the $M^{th}$ power conjugacy classes. 

\begin{theorem}\label{gen_func_all_class_revisited}
Let $M\geq 2$ be prime and $(M,q)=1$. Let $t$ be the order of $q$ in $\Z/M\Z^{\times}$. Then,
$$1+\sum\limits_{n=1}^{\infty}c(n,q,M)u^n=\prod_{j=1}^{\infty}\left[c(q,u^j)c(q^t,u^{tj})^{\frac{
1-M }{Mt}}\prod\limits_{k=1}^{\infty} c(q^t,u^{M^ktj})^{\frac{(1-M)^2}{M^{k+1}t}}\right].$$
\end{theorem}
\begin{proof}
The proof is along the same lines as the proof of 
Theorem~\ref{gen_func_semisimple_class_revisited} using 
Corollary~\ref{gen-function-all-classes-prime}.
\end{proof}
We once again work-out an example here.
\begin{example}
Let us take $M=3$ and $q\equiv 2 \imod 3$. Then, $t=2$ and we have,
\begin{eqnarray*}
1+\sum\limits_{n=1}^{\infty}c(n,q,3)u^n  &=& 
\left(\frac{1-u}{1-qu}\right)\left(\frac{1-u^2}{1-qu^2}\right)\left(\frac{1-u^2}{1-q^2u^2}\right)^{
-\frac{1}{3}}\left(\frac{1-u^3}{1-qu^3}\right)\left(\frac{1-u^4}{1-qu^4}\right)\times\\ && 
\left(\frac{1-u^4}{1-q^2u^4}\right)^{-\frac{1}{3}}\left(\frac{1-u^5}{1-qu^5}\right)\left(\frac{1-u^6
}{1-qu^6}\right)\left(\frac{1-u^6}{1-q^2u^6}\right)^{-\frac{1}{9}}P(q,u)
\end{eqnarray*}
where $P(q,u)=1+o(u^7)$. We make a table for $c(n,q,3)$ for small $n$, and compare it with the 
values of $c(n,q)$ given in \cite{Ma}. 

\begin{longtable}{|c|c|c|}
\hline 
$n$ & Number of conjugacy classes & Number of $3^{rd}$ power conjugacy classes\\ \hline
1 & $q-1$ & $q-1$\\ \hline
2 & $q^2-1$ & $\frac{2}{3}q^2-\frac{2}{3}$\\ \hline
3 & $q^3-q$ & $\frac{2}{3}q^3+\frac{1}{3}q^2-\frac{2}{3}q-\frac{1}{3}$\\ \hline
4 & $q^4-q$ & $\frac{5}{9}q^4+\frac{2}{9}q^2-q+\frac{2}{9}$\\ \hline
5 & $q^5-q^2-q+1$ & $\frac{5}{9}q^5+\frac{1}{9}q^4+\frac{2}{9}q^3 
-\frac{5}{9}q^2+\frac{7}{9}q+\frac{4}{9}$\\ \hline
6 & $q^6-q^2$ & 
$\frac{40}{81}q^6+\frac{2}{27}q^4+\frac{1}{3}q^3-\frac{11}{27}q^2-\frac{1}{3}q-\frac{13}{81}
$\\ \hline
\caption{Table for $c(n,q,3)$ when $q\equiv 2 \imod 3$.}
\label{table4}
\end{longtable}
\end{example}

\section{Exact value of $M^{th}$ powers in some cases}

In this section, we give the exact formula for $M^{th}$ powers, $M$ is a prime with $(M,q)=1$, 
in $\GL(n,q)$ when $n < Mt$, where $t=\mathfrak{M}(M,q)$ is the order of $q$ in $\Z/M\Z^{\times}$. 
In~\cite{KKS}, the asymptotic value of this proportion as $q\rightarrow \infty$ in finite reductive 
groups is determined. We will show that this limit is achieved in certain cases.

The following is the generating function for the proportion of $M^{th}$ powers in $\GL(n,q)$.
 
\begin{theorem}\label{genfunc-allelements-prime}
Let $M\geq 2$ be a prime and $(M,q)=1$. Then the generating function, 
$ \displaystyle 1+\sum_{n=0}^{\infty}\frac{|\GL(n,q)^M|}{|\GL(n,q)|}u^n = P_2P_1 $
where  
$$P_1=\prod_{d\geq 1}\left(1+\sum\limits_{n\geq 1}\sum_{\lambda\vdash 
n}\frac{u^{Mnd}}{ q^{M^2d\sum_{i} (\lambda^{'}_{i})^2}\smashoperator[r]{\prod_{i\geq 1}} 
\Big(\frac{1}{q^{d}}\Big)_{m_i(\lambda)}} \right)^{N(q,d)-N_M(q,d)},$$

$$P_2= \prod_{d\geq 1}\left( 1+ \sum_{j \geq 1} \sum_{\lambda \vdash j} 
\frac{u^{jd}}{\displaystyle q^{d\sum_{i} (\lambda^{'}_{i})^2} \prod_{i \geq 1} 
\left(\frac{1}{q^d}\right)_{m_i(\lambda)}} \right)^{N_M(q,d)}.$$
\end{theorem} 
\begin{proof}
By Proposition~\ref{prime-case}, we know that $\alpha \in \GL(n,q)$ is $M^{th}$ power if and 
only if for each $f\in \Delta_{\alpha}$, either $f$ is M-power or $M\mid m_j(\lambda_f)$ for 
all $j\geq 1$. Thus, in the cycle index generating function (Equation~\ref{cycle-index}), we 
put, for each $f\in \Phi$, $\lambda$ a partition, 
\[x_{f,\lambda}=
\begin{cases}
1 & \text{; if } f \in \Phi^M\\
1 & \text{; if } f \in \Phi\setminus \Phi^M \text{ and, } M\mid m_j(\lambda) \text{ for 
all } j\geq 1\\
0 & \text{; otherwise}.
\end{cases}
\]
This gives the required formula.
\end{proof}
\begin{lemma}\label{genfunction-allelements}
With the notation as earlier,
$$\displaystyle \prod\limits_{d\geq 1}\left(1+\sum\limits_{n\geq 1}\sum_{\lambda \vdash 
n}\frac{u^{nd}}{ q^{d\sum_{i} (\lambda^{'}_{i})^2}\smashoperator[r]{\prod_{i\geq 1}} 
\Big(\frac{1}{q^d}\Big)_{m_i(\lambda)}} \right)^{N(q,d)}=\frac{1}{1-u}.$$
\end{lemma}
\begin{proof}
The left hand side can be obtained by putting $x_{f,\lambda}=1$ for all $f\in \Phi$ and $\lambda$ 
in the cycle index generating function for $\GL(n,q)$ (see Equation~\ref{cycle-index}). Therefore, 
the coefficient of $u^n$ in the formal power series written in the left hand side is 
$\frac{1}{|\GL(n,q)|}\sum_{\alpha \in \GL(n,q)}1=1$. Thus,
$$\prod\limits_{d\geq 1}\left(1+\sum\limits_{n\geq 1}\sum_{\lambda \vdash n}\frac{u^{nd}}{ 
q^{d\sum_{i} (\lambda^{'}_{i})^2}\smashoperator[r]{\prod_{i\geq 1}} 
\Big(\frac{1}{q^d}\Big)_{m_i(\lambda)}} \right)^{N(q,d)}=1+u+u^2+\cdots=\frac{1}{1-u}.$$
\end{proof}

From~\cite{KKS} we recall the following. The limit points of $\frac{|\GL(n,q)^M|}{|\GL(n,q)|}$ 
considered as a sequence in $q$ with $M$ and $n$ fixed (see Section 4, \cite{KKS}), except 
possibly the value$1$, is  given as follows: For each $t\in \mathbb{N}$ such that $t\mid 
M-1$,
$$P(n,t,M)=\sum_{\substack{\lambda \vdash n \\  \lambda=1^{m_1}2^{m_2}\cdots}} 
\frac{1}{M^{\pi_t(\lambda)}\displaystyle\prod_{i\geq 1}i^{m_i}m_i!}$$
where $\pi_t(\lambda)$ denotes the number of parts of $\lambda$ divisible by $t$. The 
generating function for $P(n,t,M)$ in the argument $n$ (see Proposition 4.6, \cite{KKS}) is given 
as 
follows:
\begin{equation}\label{genfunction_bound}
\displaystyle 1+\sum_{n=1}^{\infty}P(n,t,M)u^n = \frac{(1-u^t)^{\frac{M-1}{Mt}}}{(1-u)}.
\end{equation}
We have the following,
\begin{theorem}\label{formula_M-powers_GL}
Let $M$ be a prime and $(M,q)=1$. Let $t$ be the order of $q$ in $\mathbb{Z}/M\mathbb{Z}^{\times}$. 
Then,
\begin{equation*}
\displaystyle \frac{|\GL(n,q)^M|}{|\GL(n,q)|}= \sum_{\substack{\lambda \vdash n \\  
\lambda=1^{m_1}2^{m_2}\cdots}} \frac{1}{M^{\pi_t(\lambda)}\displaystyle\prod_{i\geq 1}i^{m_i}m_i!},
\end{equation*}
when $n<Mt$.
\end{theorem}
\begin{proof}
From Theorem~\ref{genfunc-allelements-prime}, we have,
$$1+\sum_{n=0}^{\infty}\frac{|\GL(n,q)^M|}{|\GL(n,q)|}u^n =P_2 P_1.$$
Let us denote $\widehat{N}(q,d)=N(q,d)-N_M(q,d)$. Now, we analyze $P_2$ using 
Lemma~\ref{genfunction-allelements}. We have,
$$ P_2 = \frac{1}{1-u} \prod_{d\geq 1}\left( 1+ \sum_{j \geq 1} \sum_{\lambda \vdash j} 
\frac{u^{jd}}{\displaystyle q^{d\sum_{i} (\lambda^{'}_{i})^2} \prod_{i \geq 1} 
\left(\frac{1}{q^d}\right)_{m_i(\lambda)}} \right)^{-\widehat{N}(q,d)} = \frac{P_3}{1-u}
$$
where,
\begin{eqnarray*}
P_3 &=& \prod_{d\geq 1}\left( 1+ \sum_{j \geq 1} \sum_{\lambda \vdash j} 
\frac{u^{jd}}{\displaystyle q^{d\sum_{i} (\lambda^{'}_{i})^2} \prod_{i \geq 1} 
\left(\frac{1}{q^d}\right)_{m_i(\lambda)}} \right)^{-\widehat{N}(q,d)} \\ &=& \prod_{t\mid d}\left( 
1+ \sum_{j \geq 1} \sum_{\lambda \vdash j} \frac{u^{jd}}{\displaystyle q^{d\sum_{i} 
(\lambda^{'}_{i})^2} \prod_{i \geq 1} \left(\frac{1}{q^d}\right)_{m_i(\lambda)}} 
\right)^{-\widehat{N}(q,d)} \\ &=& \prod_{k=0}^{\infty} \prod_{M\nmid d}\left( 1+ \sum_{j \geq 1} 
\sum_{\lambda \vdash j} \frac{u^{jM^ktd}}{\displaystyle q^{td\sum_{i} (\lambda^{'}_{i})^2} \prod_{i 
\geq 1} \left(\frac{1}{q^{td}}\right)_{m_i(\lambda)}} \right)^{-\widehat{N}(q,M^k.t.d)} \\ &=& 
\prod_{k=0}^{\infty} \prod_{M\nmid d}\left( 1+ \sum_{j \geq 1} \sum_{\lambda \vdash j} 
\frac{u^{jM^ktd}}{\displaystyle q^{td\sum_{i} (\lambda^{'}_{i})^2} \prod_{i \geq 1} 
\left(\frac{1}{q^{td}}\right)_{m_i(\lambda)}} \right)^{\frac{1-M}{M^{k+1}.t}N(q^{M^kt},d)}\\
&=& \prod_{M\nmid d}\left( 1+ \sum_{j \geq 1} \sum_{\lambda \vdash j} 
\frac{u^{tjd}}{\displaystyle q^{td\sum_{i} (\lambda^{'}_{i})^2} \prod_{i \geq 1} 
\left(\frac{1}{q^{td}}\right)_{m_i(\lambda)}} \right)^{N(q^t,d)} \times \\ && \prod_{k=1}^{\infty} 
\prod_{M\nmid d}\left( 1+ \sum_{j \geq 1} \sum_{\lambda \vdash j} \frac{u^{jM^ktd}}{\displaystyle 
q^{td\sum_{i} (\lambda^{'}_{i})^2} \prod_{i \geq 1} \left(\frac{1}{q^{td}}\right)_{m_i(\lambda)}} 
\right)^{\left(N(q^t,M^k.d)+\frac{N(q^{M^{k-1}t},d)}{M^k}\right)\frac{1-M}{Mt}}\\
&=& \prod_{d\geq 1}\left( 1+ \sum_{j \geq 1} \sum_{\lambda \vdash j} 
\frac{u^{tjd}}{\displaystyle q^{td\sum_{i} (\lambda^{'}_{i})^2} \prod_{i \geq 1} 
\left(\frac{1}{q^{td}}\right)_{m_i(\lambda)}} \right)^{N(q^t,d).\frac{1-M}{Mt}} \times \\ && 
\prod_{k=1}^{\infty} \prod_{M\nmid d}\left( 1+ \sum_{j \geq 1} \sum_{\lambda \vdash j} 
\frac{u^{jM^ktd}}{\displaystyle q^{td\sum_{i} (\lambda^{'}_{i})^2} \prod_{i \geq 1} 
\left(\frac{1}{q^{td}}\right)_{m_i(\lambda)}} \right)^{N(q^{M^{k-1}t},d)\frac{1-M}{M^{k+1}t}} 
\end{eqnarray*}

\begin{eqnarray*}
&=& \left(\frac{1}{1-u^t}\right)^{\frac{1-M}{Mt}}\prod_{k=1}^{\infty} \prod_{M\nmid d}\left( 
1+ \sum_{j \geq 1} \sum_{\lambda \vdash j} \frac{u^{jM^ktd}}{\displaystyle q^{td\sum_{i} 
(\lambda^{'}_{i})^2} \prod_{i \geq 1} \left(\frac{1}{q^{td}}\right)_{m_i(\lambda)}} 
\right)^{\frac{N(q^{M^{k-1}t},d)}{M^{k-1}}\frac{1-M}{M^2t}}.
\end{eqnarray*}
Note that we have used Lemma~\ref{recursive-reln} in the last few steps in the same way as in 
the proof of Proposition~\ref{gen-fun-nhatqd}.
Inductively, we get,
$$P_3=\prod_{k=0}^{\infty}(1-u^{M^kt})^{\frac{M-1}{M^{k+1}t}}.$$
Thus, 
\begin{eqnarray*}
\displaystyle 1+\sum_{n=0}^{\infty}\frac{|\GL(n,q)^M|}{|\GL(n,q)|}u^n &=& 
P_2P_1=\frac{P_3}{1-u}P_1=\frac{\prod\limits_{k=0}^{\infty}(1-u^{M^kt})^{\frac{M-1}{M^{k+1}t}}}{1-u}
P_1 \\ &=& 
P_1\frac{(1-u^t)^{\frac{M-1}{Mt}}}{1-u}\prod\limits_{k=1}^{\infty}(1-u^{M^kt})^{\frac{M-1}{M^{k+1}t}
}.
\end{eqnarray*}
Finally observe that when $n<Mt$, the coefficient of the generating function on the right-hand side 
is contributed only by the coefficient of $\frac{(1-u^t)^{\frac{M-1}{Mt}}}{1-u}$ which is precisely 
$P(n,t,M)$ (see Equation~\ref{genfunction_bound}).
This proves the required result.
\end{proof}
\noindent Using this we can simplify the generating function in 
Theorem~\ref{genfunc-allelements-prime} further.
\begin{corollary}\label{genfunction-allelements-prime-simple}
$$\displaystyle 
1+\sum_{n=0}^{\infty}\frac{|\GL(n,q)^M|}{|\GL(n,q)|}u^n=\frac{P_1}{1-u}\prod\limits_{k=0}^{\infty}
(1-u^{M^kt})^{\frac{M-1}{M^{k+1}t}},$$
where $\displaystyle P_1=\prod_{d\geq 1}\left(1+\sum\limits_{n\geq 1}\sum_{\lambda\vdash 
n}\frac{u^{Mnd}}{ q^{M^2d\sum_{i} (\lambda^{'}_{i})^2}\smashoperator[r]{\prod_{i\geq 1}} 
\Big(\frac{1}{q^{d}}\Big)_{m_i(\lambda)}} \right)^{N(q,d)-N_M(q,d)}.$
\end{corollary}
The power map on $\GL(n,q)$ is surjective if and only if $(M,q)=1$ and $n<t$ where $t$ is the 
order of $q$ in $\Z/M\Z^{\times}$ (see Proposition 4.2, \cite{KKS}). This is reflected in the above 
proposition since $P(n,t,M)=1$ if and only if $n<t$. The quantity $P(n,t,M)$ gives the 
sub-sequential limits as a sequence in $q$ of the proportion of $M^{th}$ powers in $\GL(n,q)$ for 
a fixed $n$. The previous theorem shows that in some cases these sub-sequential limits are actually 
achieved. 

\section{$M^{th}$ powers when $M$ is a prime and $q$ is a power of $M$}

So far, we have dealt with the case when $M$ is coprime to $q$. Recall that we are interested in 
determining the image of the power map $\omega \colon \text{GL}(n, q) \rightarrow \text{GL}(n, q)$ 
given by $x\mapsto x^M$. From the point of view of Jordan decomposition of elements, when $(q,M)=1$, 
all unipotent elements survive as they are of order a power of $q$. Now, we want to focus on the 
case when $M$ and $q$ are not coprime. For simplicity of computations, we take the case $M$ is a 
prime and $q$ is a power of $M$. In this case, all semisimple elements survive in the image. Miller 
(see~\cite{Mi}) enumerated squares in $\text{GL}(n, 2^a)$; thus dealt with a particular case, $M=2$, 
of our situation. Our exposition in this section follows closely to that of Miller. However, more 
often than not, we need to write the proof of various statements a fresh, thus generalising his 
results. 

In this section, we fix $M$ a prime and $q$, a power of $M$. We determine the conjugacy classes that 
are $M^{th}$ powers in $\text{GL}(n, q)$. We begin with a Lemma (analogous to our earlier 
Lemma~\ref{jordan-block-power}) which is~\cite[Lemma 2]{Mi}. Recall the notation that $J_{\gamma, 
n}$ denotes a Jordan block matrix of size $n$ with diagonal $\gamma$.  
\begin{lemma}[Miller]\label{jordan-block-power-modular}
Let $J_{0,n}$ be the Jordan block corresponding to scalar $0$. Then, $J_{0,n}^M$ is conjugate 
to 
$$\underbrace{J_{0, \ceil*{\frac{n}{M}}} \oplus\cdots \oplus J_{0, \ceil*{\frac{n}{M} }}}_{\bar 
n} \oplus \underbrace{ J_{0, \floor*{\frac{n}{M}}}\oplus \cdots \oplus J_{0, 
\floor*{\frac{n}{M}}}}_{(M- \bar n)}$$
		where $0\leq \bar n \leq M-1$ is $n\imod M$ and $\ceil*{\frac{n}{M}}$, $\floor*{\frac{n}{M}}$ are the ceiling and floor functions respectively. 
	\end{lemma}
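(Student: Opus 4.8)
The plan is to analyze the nilpotent matrix $N = J_{0,n}$ directly, since $J_{0,n}^M = N^M$. The single Jordan block $J_{0,n}$ corresponds to the $\mathbb{F}_q[x]$-module $\mathbb{F}_q[x]/(x^n)$, with $N$ acting as multiplication by $x$; thus $N^M$ acts as multiplication by $x^M$. The conjugacy class of $N^M$ is therefore determined by the module structure of $V = \mathbb{F}_q[x]/(x^n)$ regarded as a module over the subring $\mathbb{F}_q[x^M] \cong \mathbb{F}_q[y]$ (via $y \mapsto x^M$), and I must decompose this restricted module into cyclic pieces $\mathbb{F}_q[y]/(y^k)$, each of which corresponds to a Jordan block $J_{0,k}$.

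The key computation is to find, for each $j \ge 0$, the dimension of $\ker(N^{Mj})$ on $V$, equivalently $\dim \ker(y^j)$ on $V$ viewed as an $\mathbb{F}_q[y]$-module; the multiplicities of the blocks $J_{0,k}$ are then recovered from the second differences of these dimensions (the standard ``partition from a nilpotent operator'' argument). Since $N^{Mj} = 0$ exactly when $Mj \ge n$, i.e. $j \ge \lceil n/M \rceil$, and $\dim \ker(N^i) = \min(i,n)$ for all $i \ge 0$, I get $\dim \ker(y^j) = \min(Mj, n)$. Writing $n = M\lfloor n/M\rfloor + \bar n$ with $0 \le \bar n \le M-1$, the sequence $d_j := \dim\ker(y^j) - \dim\ker(y^{j-1})$ equals $M$ for $1 \le j \le \lfloor n/M\rfloor$, equals $\bar n$ for $j = \lfloor n/M\rfloor + 1$ (present only if $\bar n > 0$), and equals $0$ thereafter. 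The number of blocks of size $\ge j$ is $d_j$, so the number of blocks of size exactly $\lceil n/M\rceil$ is $d_{\lceil n/M\rceil}$ and of size exactly $\lfloor n/M\rfloor$ is $d_{\lfloor n/M\rfloor} - d_{\lceil n/M\rceil}$. When $\bar n = 0$ these are $0$ and $M$ respectively (all $M$ blocks have size $n/M$); when $\bar n > 0$, $\lceil n/M\rceil = \lfloor n/M\rfloor + 1$, so there are $\bar n$ blocks of size $\lceil n/M\rceil$ and $M - \bar n$ blocks of size $\lfloor n/M\rfloor$. In both cases this matches the claimed decomposition, and one checks the sizes add up: $\bar n\lceil n/M\rceil + (M-\bar n)\lfloor n/M\rfloor = n$.

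Alternatively, and perhaps more concretely for the write-up, I can exhibit an explicit $\mathbb{F}_q[y]$-basis of $V = \mathbb{F}_q[x]/(x^n)$: the elements $1, x, \dots, x^{M-1}$ generate $V$ over $\mathbb{F}_q[y] = \mathbb{F}_q[x^M]$, and for $0 \le i \le M-1$ the cyclic submodule generated by $x^i$ is $\mathbb{F}_q[y]\cdot x^i \cong \mathbb{F}_q[y]/(y^{k_i})$ where $k_i$ is the least integer with $M k_i + i \ge n$; one verifies the sum is direct by a dimension count and that $k_i = \lceil n/M\rceil$ precisely when $i < \bar n$ (equivalently $\bar n$ values of $i$), and $k_i = \lfloor n/M\rfloor$ otherwise. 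This is really the same argument packaged differently. The main obstacle is minor: being careful with the two boundary cases $\bar n = 0$ versus $\bar n > 0$ so that the ceiling/floor bookkeeping is exactly right and the empty-summand conventions ($J_{0,0}$ contributing nothing) are handled cleanly; there is no deep difficulty here, as this is essentially Miller's Lemma~2 and I would cite \cite{Mi} while supplying the module-theoretic proof for completeness.
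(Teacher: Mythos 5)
Your argument is correct. Note first that the paper itself supplies no proof of this lemma: it is quoted verbatim as \cite[Lemma 2]{Mi} and used as a black box, so there is nothing in the paper to compare against line by line. Your kernel-dimension computation is the standard and cleanest route: since $\dim\ker(J_{0,n}^{\,i})=\min(i,n)$, the number of Jordan blocks of $N^M$ of size at least $j$ is $\min(Mj,n)-\min(M(j-1),n)$, which equals $M$ for $j\le\lfloor n/M\rfloor$, equals $\bar n$ at $j=\lceil n/M\rceil$ when $\bar n>0$, and vanishes beyond; this yields exactly $\bar n$ blocks of size $\lceil n/M\rceil$ and $M-\bar n$ of size $\lfloor n/M\rfloor$, with the dimension check $\bar n\lceil n/M\rceil+(M-\bar n)\lfloor n/M\rfloor=n$. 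Your alternative explicit decomposition $\mathbb F_q[x]/(x^n)=\bigoplus_{i=0}^{M-1}\mathbb F_q[x^M]\cdot x^i$ with annihilators $(y^{k_i})$, $k_i=\lceil(n-i)/M\rceil$, is also correct and arguably closer in spirit to Miller's original matrix-level verification (permuting the standard basis vectors $e_1,\dots,e_n$ into residue classes mod $M$). Both boundary cases ($\bar n=0$ versus $\bar n>0$) are handled as you describe, and the lemma needs no hypothesis relating $M$ to $q$, consistent with your proof never using one. No gaps.
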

	
	\subsection{A map on partitions}
In the view of Lemma above, we define the following map $\Theta_M$ on the set of partitions 
$\Lambda$. Let $\Lambda(n)$ denote the set of all partitions of $n$; thus $\Lambda=\bigcup_{n\geq 0} 
\Lambda(n)$. We define the map $\Theta_M\colon \Lambda(n)\rightarrow \Lambda(n)$ as follows: Given 
a partition $\lambda=(\lambda_1, \ldots ,\lambda_k)$ of $n$, we define 
$$\Theta_M(\lambda) = \left(\underbrace{\ceil*{\frac{\lambda_1}{M}},\ldots, 
\ceil*{\frac{\lambda_1}{M}}}_{\bar \lambda_1}, \underbrace{\floor*{\frac{\lambda_1}{M}},\ldots, 
\floor*{\frac{\lambda_1}{M}}}_{M -\bar \lambda_1 },   \ldots,  
\underbrace{\ceil*{\frac{\lambda_k}{M}},\ldots, \ceil*{\frac{\lambda_k}{M}}}_{\bar \lambda_k}, 
\underbrace{\floor*{\frac{\lambda_k}{M}},\ldots, \floor*{\frac{\lambda_k}{M}}}_{M -\bar \lambda_k}  
\right)$$
suitably rearranged in non-increasing order, where $0\leq \bar \lambda_i \leq (M-1)$ is $\lambda_i 
\imod M$. We give some examples to illustrate this 
map.
\begin{center}
	\begin{tabular}{|c|c|| c | c|}
	\hline 
			$\lambda \vdash 4$ & $\Theta_2(\lambda)$ & $\lambda \vdash 4$ & $\Theta_2(\lambda)$  \\  
\hline 
			$(3,1)$  & $(2,1,1)$ & $(2, 2)$ & $(1, 1, 1, 1)$ \\ \hline
			$(1, 1, 1, 1)$ & $(1, 1, 1, 1)$ & $(4) $ & $(2, 2)$ \\ \hline
			$(2, 1, 1)$ & $(1, 1, 1, 1)$ && \\ \hline
		\end{tabular}
	\end{center}
	Thus, we see that $\Theta_2(\Lambda(4))=\{(1, 1, 1, 1), (2, 1, 1), (2, 2) \}\subset \Lambda(4)$ 
and similarly $\Theta_3(\Lambda(5))= \{(1, 1, 1, 1, 1), (2, 1, 1, 1), (2, 2, 1)\}\subset 
\Lambda(5)$. Since, this map is quite 
important for us, we elaborate this alternatively using the other notation for a partition. Let 
$\lambda=1^{m_1(\lambda)}2^{m_2(\lambda)}\cdots$ be a partition of $n$ then $\Theta_M(\lambda) = 
1^{m_1(\Theta_M(\lambda))} 2^{m_2(\Theta_M(\lambda))} \cdots$ where 
$$m_i(\Theta_M(\lambda)) = Mm_{(iM)}(\lambda) + \sum_{j=1}^{M-1} (M-j) \left( m_{(iM-j)}(\lambda)+ 
m_{(iM+j)}(\lambda)\right).$$ 
If we take $M=2$, we get the function defined in~\cite[Proposition 3]{Mi}.   It is 
easy to see that both of the above definitions are same. 
	We make a table to illustrate the size of image for some small values.
	\begin{center}
		\begin{tabular}{|c|c|c|c|c|} 
			\hline
			$n$ & $|\Lambda(n)$ & $|\Theta_2(\Lambda(n))|$ & $|\Theta_3(\Lambda(n))|$ & $|\Theta_5(\Lambda(n))|$  \\ 
			\hline
			$1$ & $1$& $1$ & $1$ & $1$ \\
			$2$ & $2$& $1$ & $1$ & $1$\\ 
			$3$ & $3$ & $2$ & $1$ & $1$\\ 
			$4$ &$5$ & $3$ & $2$ & $1$\\ 
			$5$ & $7$& $4$ & $3$ & $1$\\ 
			$6$ & $11$& $5$ & $4$ & $2$\\ 
			$7$ & $15$& $7$ & $5$ & $3$\\ 
			$8$ & $22$& $10$ & $6$ & $4$\\
			$9$ & $30$& $13$ & $7$ & $5$\\ 
			$10$ & $42$&  $16$ & $9$ & $6$\\ 
			$11$ & $56$& $21$ & $12$ & $7$\\ 
			$12$ & $77$& $28$ & $16$ & $8$\\ 
			$13$ & $101$& $35$ & $20$ & $9$\\ 
			$14$ & $135$& $43$ & $24$ & $10$\\ 
			$15$ & $176$& $55$ & $28$ & $11$\\ 
			\hline
		\end{tabular}
	\end{center}
	\label{Tab:T}
	\noindent We would like to count the image of $\Theta_M$. The following Lemma is a generalization of~\cite[Proposition 3]{Mi}. 
	\begin{lemma}
		Let $\Theta_M \colon \Lambda(n)\rightarrow \Lambda(n)$ be the map described above. Then, a 
partition $\mu$ of $n$ is in the image of $\Theta_M$ if and only if $\displaystyle\sum_{j=1}^{M-1} 
m_{(iM-j)}(\mu')\leq 1$, for each $i\geq 1$, where $\mu'$ is transpose of the partition $\mu$.
	\end{lemma}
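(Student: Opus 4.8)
The plan is to understand the transpose operation's interaction with $\Theta_M$, since the characterization is phrased on $\mu'$. First I would analyze what $\Theta_M$ does at the level of columns rather than rows. If $\lambda = (\lambda_1 \geq \lambda_2 \geq \cdots)$ with $\lambda_j = |\{i : \lambda'_i \geq j\}|$, then writing $\lambda_j = M q_j + s_j$ with $0 \le s_j \le M-1$, the part $\lambda_j$ contributes $s_j$ parts equal to $q_j+1 = \ceil{\lambda_j/M}$ and $M - s_j$ parts equal to $q_j = \floor{\lambda_j/M}$ to $\Theta_M(\lambda)$. The key observation is that, reading the multiset of parts of $\Theta_M(\lambda)$, for a fixed value $v$ the number of parts of $\Theta_M(\lambda)$ that are $\geq v$ equals $\sum_j \#\{\text{parts of the block from }\lambda_j\text{ that are} \geq v\}$, and this is a piecewise-linear function of the $\lambda_j$. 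Concretely I expect to show
$$
m_{\geq v}(\Theta_M(\lambda)) \;=\; \Theta_M(\lambda)'_v \;=\; \#\{j : \lambda_j \geq Mv\} \cdot M \;+\; \sum_{\ell=1}^{M-1}\#\{j : \lambda_j \geq Mv - (M-\ell)\}\cdot 0 \cdots
$$
— more cleanly, that $\Theta_M(\lambda)'$ is obtained from $\lambda'$ by the rule $\Theta_M(\lambda)'_v = \sum_{t=0}^{M-1} [\text{number of }j\text{ with }\lambda_j \geq Mv - t]$ appropriately weighted, i.e. $\Theta_M(\lambda)'$ is a "merge by $M$" of consecutive columns of $\lambda'$. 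The cleanest formulation: $\Theta_M(\lambda)'_v = \lambda'_{M(v-1)+1} + \lambda'_{M(v-1)+2} + \cdots + \lambda'_{Mv}$ — the $v$-th column of $\Theta_M(\lambda)$ is the sum of the $M$ columns $M(v-1)+1, \ldots, Mv$ of $\lambda$. This should be checked directly from the ceiling/floor description and is the technical heart.

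Granting that formula, the forward direction is immediate: if $\mu = \Theta_M(\lambda)$ then $\mu'_v = \lambda'_{M(v-1)+1} + \cdots + \lambda'_{Mv}$, so $m_{(iM-j)}(\mu') = \mu'_{iM-j} - \mu'_{iM-j+1}$... actually I would argue more directly that for the column sequence $\mu'$, the values $\mu'_1 \geq \mu'_2 \geq \cdots$ are sums of $M$-blocks of the weakly decreasing sequence $\lambda'$, and two consecutive such block-sums $\mu'_{v}$ and $\mu'_{v+1}$ can differ only "by full blocks" in a controlled way — precisely, among the $M$ column-indices $iM-1, iM-2, \ldots, iM-(M-1)$ (those strictly between block boundaries $iM$ and $(i-1)M$... reindexing appropriately), at most one value of $m_k(\mu')$ can be nonzero, because inside a single $M$-block the partial sums are determined by a single monotone sequence and a genuine drop in $\mu'$ can only be "registered" once per block. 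This gives $\sum_{j=1}^{M-1} m_{(iM-j)}(\mu') \le 1$. For the converse, given $\mu$ satisfying the condition, I would construct a preimage $\lambda$ explicitly by "splitting" each column $\mu'_v$ into $M$ columns $\lambda'_{M(v-1)+1} \ge \cdots \ge \lambda'_{Mv}$ whose sum is $\mu'_v$; the hypothesis on $\mu'$ is exactly what guarantees this can be done so that the resulting $\lambda'$ is weakly decreasing across block boundaries (i.e. $\lambda'_{Mv} \ge \lambda'_{Mv+1}$), hence a genuine partition, and that applying $\Theta_M$ recovers $\mu$.

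The main obstacle I anticipate is bookkeeping with the indexing: correctly matching the "row" description of $\Theta_M$ (the formula for $m_i(\Theta_M(\lambda))$ with its $\sum_{j=1}^{M-1}(M-j)(m_{iM-j}+m_{iM+j})$) against the "column-merging" description, and pinning down exactly which residues mod $M$ appear in the condition. I would handle this by working almost entirely on the transpose side: establish the single clean identity "$v$-th column of $\Theta_M(\lambda)$ = sum of columns $M(v-1)+1$ through $Mv$ of $\lambda$", verify it against the small examples in the table ($\Theta_2((3,1)) = (2,1,1)$: here $\lambda' = (2,1,1)$, columns merged in pairs give $(2+1, 1+0) = (3,1)$ — wait, that recovers $\lambda$, so I must instead merge columns of $\lambda$, with $\lambda=(3,1)$ having $\lambda'=(2,1,1)$ and... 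I will recheck orientation carefully against the examples before committing), and then the two implications are short combinatorial arguments about when a weakly decreasing sequence of $M$-block sums forces the consecutive-difference condition.
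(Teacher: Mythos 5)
Your plan is correct, and it in fact supplies the argument the paper omits: the paper's ``proof'' of this lemma is only the sentence that it is ``along the same lines as in~\cite{Mi}'', so your column-merging identity is exactly the generalization of Miller's $M=2$ argument that needs to be written down. The technical heart you identified holds as you stated it: for a single part $\lambda_j=Mq+s$ with $0\le s\le M-1$, the number of parts of its image block ($s$ copies of $q+1$ and $M-s$ copies of $q$) that are $\ge v$ equals the number of integers $t\in\{M(v-1)+1,\dots,Mv\}$ with $t\le\lambda_j$, and summing over $j$ gives $\Theta_M(\lambda)'_v=\lambda'_{M(v-1)+1}+\cdots+\lambda'_{Mv}$. (Your momentary worry about orientation resolves correctly: for $\lambda=(3,1)$ the merged columns give $\Theta_2(\lambda)'=(3,1)$, hence $\Theta_2(\lambda)=(2,1,1)$, matching the paper's table; the apparent ``recovery of $\lambda$'' there is a coincidence of that example.) With the identity in hand both directions close as you indicate. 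Forward: if $\mu'_v$ and $\mu'_{v+1}$ are consecutive block sums of a weakly decreasing sequence $a=\lambda'$, then $\mu'_{v+1}\le Ma_{Mv+1}\le Ma_{Mv}\le\mu'_v$, so a multiple of $M$ separates them and no two parts of $\mu'$ can lie strictly between $(i-1)M$ and $iM$. Converse: split each $\mu'_v$ into $(\mu'_v \bmod M)$ copies of $\lceil\mu'_v/M\rceil$ followed by the appropriate number of copies of $\lfloor\mu'_v/M\rfloor$; the hypothesis forbids $\mu'_v$ and $\mu'_{v+1}$ from sharing the same open interval between consecutive multiples of $M$, which is precisely what guarantees $\lfloor\mu'_v/M\rfloor\ge\lceil\mu'_{v+1}/M\rceil$, so the concatenation is a partition whose block sums recover $\mu'$. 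This is a complete and correct route; the only remaining work is the routine bookkeeping you already flagged.
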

	\begin{proof}
		The proof is along the same lines as in~\cite{Mi}.
	\end{proof}
	\noindent Now we can write the generating function for this quantity. This generalises the result mentioned in~\cite{Mi} (just before Proposition 3) for $M=2$.
	\begin{proposition}\label{image-thetaM}
		With the notation as above,
		$$ 1 + \sum_{n=1}^{\infty} |\Theta_M(\Lambda(n))| u^n = \prod_{k=1}^{\infty} \frac{1 + u^{kM-1} + u^{kM-2} + \cdots + u^{kM-(M-1)}}{1 - u^{kM}}.$$
	\end{proposition}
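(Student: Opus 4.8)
The plan is to translate the combinatorial description of the image of $\Theta_M$, given in the preceding Lemma, into a product generating function by thinking of a partition $\mu$ in the image in terms of its transpose $\mu'$, and then encoding the admissible $\mu'$ blockwise. Recall that $\mu \in \Theta_M(\Lambda(n))$ iff $\sum_{j=1}^{M-1} m_{(iM-j)}(\mu') \leq 1$ for every $i \geq 1$; there is no constraint on the multiplicities $m_{(iM)}(\mu')$ of the parts of $\mu'$ that are multiples of $M$. Since $|\mu| = |\mu'|$ and the transpose map is a bijection on $\Lambda(n)$, we have $|\Theta_M(\Lambda(n))|$ equals the number of partitions $\nu$ of $n$ (here $\nu = \mu'$) satisfying, for each $i \geq 1$, that among the $M-1$ part-sizes $iM-1, iM-2, \ldots, iM-(M-1)$ at most one occurs, and it occurs with multiplicity exactly $1$ (so the total multiplicity contributed by that window of sizes is $\leq 1$). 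Thus the sum $\sum_{n\geq 0} |\Theta_M(\Lambda(n))|\,u^n$ is the generating function counting such constrained partitions $\nu$ by $|\nu|$.

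Next I would write this count as an infinite product, one factor for each $i \geq 1$, grouping the part-sizes into consecutive blocks $\{iM-(M-1), \ldots, iM-1, iM\}$ for $i = 1, 2, \ldots$; these blocks partition all positive integers. For a fixed $i$, the part $iM$ may be used any number of times, contributing a factor $(1-u^{iM})^{-1}$, while the parts $iM-1, \ldots, iM-(M-1)$ may collectively contribute either nothing, or exactly one part of exactly one of these sizes, giving a factor $1 + u^{iM-1} + u^{iM-2} + \cdots + u^{iM-(M-1)}$. Since the choices in different blocks are independent and the blocks exhaust $\mathbb{N}$, multiplying over all $i$ (renaming $i$ as $k$) yields
$$
1 + \sum_{n=1}^{\infty} |\Theta_M(\Lambda(n))|\, u^n = \prod_{k=1}^{\infty} \frac{1 + u^{kM-1} + u^{kM-2} + \cdots + u^{kM-(M-1)}}{1 - u^{kM}},
$$
which is exactly the claimed identity. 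I would justify the "multiply the factors" step by the standard observation that expanding the product and collecting the coefficient of $u^n$ reproduces, term by term, the sum over constrained partitions of $n$: each monomial in the expansion selects, for each block $k$, either a power of $u^{kM}$ (how many copies of the part $kM$) or one of $1, u^{kM-1}, \ldots, u^{kM-(M-1)}$ (whether and which small part from that block is used), and such selections are in bijection with the constrained partitions.

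The only genuine point needing care — the main obstacle — is the correct bookkeeping of the inequality $\sum_{j=1}^{M-1} m_{(iM-j)}(\mu') \leq 1$: one must check that it says precisely "at most one part from the window $iM-(M-1), \ldots, iM-1$, and if present it has multiplicity one," and that this is independent across windows and independent of the unconstrained multiplicities of the parts $iM$. I would verify the small cases $M=2$ (recovering the formula quoted before Proposition 3 of \cite{Mi}) and $M=3, 5$ against the table of $|\Theta_M(\Lambda(n))|$ values listed above, e.g.\ extracting the coefficient of $u^n$ for small $n$ from $\prod_{k\geq 1}\frac{1+u^{2k-1}}{1-u^{2k}}$ and comparing with the column $|\Theta_2(\Lambda(n))|$, as a consistency check before writing the general argument.
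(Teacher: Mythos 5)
Your proposal is correct and follows essentially the same route as the paper: pass to the transpose, interpret the condition $\sum_{j=1}^{M-1} m_{(iM-j)}(\mu')\leq 1$ as ``at most one part from each window $\{iM-(M-1),\ldots,iM-1\}$, with the parts of size $iM$ unconstrained,'' and encode each window by the factor $\bigl(1+u^{iM-1}+\cdots+u^{iM-(M-1)}\bigr)(1-u^{iM})^{-1}$. The blockwise product you write is exactly the one the paper expands.
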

	\begin{proof}
		By the previous Lemma, we see that the $k^{th}$ term $|\Theta_M(\Lambda(k))|$ is equal to the number of partitions $\lambda\vdash k$ satisfying $\sum_{j=1}^{M-1} m_{(iM-j)}(\lambda')\leq 1$. This means that for each $i\geq 1$, at most one of $m_{iM-j}(\lambda')=1$, and all other terms are $0$ in the sum. For counting sake, we can think of $\lambda$ instead of $\lambda'$. Thus, $|\Theta_M(\Lambda(k))|$ is the coefficient if $u^{k}$ in the following product:
		\begin{eqnarray*}
			&&\left ((1+u+u^2+\cdots + u^{M-1})\left(\sum_{t=0}^{\infty} u^{tM} \right)\right)\times \\ && \left((1+u^{M+1}+u^{M+2}+\cdots + u^{2M-1})\left(\sum_{t=0}^{\infty}u^{2tM}\right)\right)\times \cdots \\
			&& \cdots \times \left((1+ u^{iM+1} + u^{iM+2} + \cdots + u^{iM+(M-1)}) \left(\sum_{t=0}^{\infty}u^{itM}\right)\right)\times \cdots \\  
			&& = \prod_{i=1}^{\infty} \left( 1 + u^{(i-1)M+1} + u^{(i-1)M+2} + \cdots + u^{(i-1)M+(M-1)}\right)\left( \frac{1}{1-u^{iM}}\right)\\
			&& = \prod_{i=1}^{\infty}  \frac{ 1 + u^{iM-1} + u^{iM-2} + \cdots + u^{iM-(M-1)}}{1-u^{iM}}.
		\end{eqnarray*}
		This completes the proof.
	\end{proof}
	
	
	\subsection{Computing powers}
	Now, we are ready to describe the result which generalises ~\cite[Theorem 1]{Mi}, and determines $M^{th}$ powers in $\text{GL}(n,q)$ in this case.
	\begin{theorem}\label{M-power-modular-case}
		Let $M$ be a prime and $q$ be a power of $M$. Let $\alpha \in \text{GL}(n, q)$ and $\Delta_{\alpha}$ be its associated combinatorial data consisting of $f_i$ and $\lambda_{f_i}$. Then, $X^M=\alpha$ has a solution in $\text{GL}(n,q)$ if and only if the partitions $\lambda_{f_i}$ are in $\Theta_M\left(\Lambda(|\lambda_{f_i}|)\right)$, for all $i$. 
	\end{theorem}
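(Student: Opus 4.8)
The strategy is to prove the exact modular analogue of Proposition~\ref{M-power-partition-split}: I will compute the combinatorial data $\Delta_{\alpha^M}$ directly from $\Delta_\alpha$ and observe that the polynomial part is permuted bijectively while each partition $\lambda_{f_i}$ is replaced by $\Theta_M(\lambda_{f_i})$. Granting this, the theorem is pure bookkeeping: in one direction the data of a hypothetical $M$-th root forces the partitions of $\alpha$ to lie in the image of $\Theta_M$, and in the other direction choosing preimages under $\Theta_M$ lets us write down a root explicitly.

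First I would record that, since $M$ is a prime dividing $q$, one has $(M,q^d-1)=1$ for all $d\geq 1$, so $x\mapsto x^M$ is an injective, hence bijective, map of each finite group $\mathbb F_{q^d}^*$, and it fixes $0$. From this bijectivity one obtains a degree-preserving bijection $f\mapsto f_M$ of $\Phi$, where $f_M$ is the minimal polynomial of the $M$-th power of a root $\eta$ of $f$: if $\eta$ had degree $d$ but $\eta^M$ lay in a proper subfield $\mathbb F_{q^e}\subsetneq\mathbb F_{q^d}$, then the unique $M$-th root of $\eta^M$ inside $\mathbb F_{q^e}$ would coincide with the unique $M$-th root of $\eta^M$ in $\mathbb F_{q^d}$, namely $\eta$, forcing $\eta\in\mathbb F_{q^e}$ — a contradiction; so $f_M$ again has degree $d$, and taking $M$-th roots inverts $f\mapsto f_M$. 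In particular distinct polynomials in $\Delta_\alpha$ never collapse under $f\mapsto f_M$, and $f\neq x$ gives $f_M\neq x$.

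Next I would reduce to a single block. With the canonical representative $\diag(R_1,\dots,R_l)$, $R_i=\diag(J_{f_i,\lambda_{i_1}},J_{f_i,\lambda_{i_2}},\dots)$, the $M$-th power is block-diagonal, so it suffices to analyse $J_{f,m}^M$ for a single part $m$ of $\lambda_f$, $\deg f=d$. Exactly as in the proof of Proposition~\ref{M-power-partition-split}, conjugating by $\diag(P,\dots,P)$ with $P^{-1}C(f)P$ diagonal (and $P^{-1}I_dP=I_d$, so the interspersed identity blocks are undisturbed) shows $J_{f,m}$ is conjugate over $\mathbb F_{q^d}$ to $\bigoplus_{j=0}^{d-1}J_{\eta^{q^j},m}$, for $\eta$ a root of $f$. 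The one genuinely new computation is: for a scalar $\gamma$, writing $J_{\gamma,m}=\gamma I_m+N_m$ one has in characteristic $M$ that $(\gamma I_m+N_m)^M=\gamma^M I_m+N_m^M$, since every intermediate binomial coefficient $\binom{M}{j}$, $1\le j\le M-1$, vanishes modulo the prime $M$. As $N_m=J_{0,m}$, Lemma~\ref{jordan-block-power-modular} gives the Jordan type of $N_m^M$, whence $J_{\gamma,m}^M$ is conjugate to $\bar m$ copies of $J_{\gamma^M,\lceil m/M\rceil}$ together with $M-\bar m$ copies of $J_{\gamma^M,\lfloor m/M\rfloor}$, where $0\le\bar m\le M-1$ and $\bar m\equiv m\pmod M$. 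Reassembling over the Galois orbit, and using $(\eta^{q^j})^M=(\eta^M)^{q^j}$ together with the fact that $\bigoplus_{j=0}^{d-1}J_{\zeta^{q^j},k}$ is the $\mathbb F_{q^d}$-diagonalisation of $J_{f_M,k}$ (for $\zeta=\eta^M$, which has degree $d$), we conclude that $R_i^M$ has combinatorial data the single polynomial $(f_i)_M$ with the partition obtained from $\lambda_{f_i}$ by replacing each part $\lambda_{i_r}$ with $\bar\lambda_{i_r}$ copies of $\lceil\lambda_{i_r}/M\rceil$ and $M-\bar\lambda_{i_r}$ copies of $\lfloor\lambda_{i_r}/M\rfloor$ — which by the second description of $\Theta_M$ is precisely $\Theta_M(\lambda_{f_i})$. (Here one also uses that conjugacy over $\mathbb F_{q^d}$ of matrices with $\mathbb F_q$-entries implies conjugacy over $\mathbb F_q$.) Thus $\Delta_{\alpha^M}=\{\,((f_i)_M,\ \Theta_M(\lambda_{f_i}))\,\}_i$, with the $(f_i)_M$ still pairwise distinct.

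The theorem now follows. If $X^M=\alpha$ and $\Delta_X=\{(g_j,\mu_{g_j})\}_j$, then $\Delta_\alpha=\Delta_{X^M}=\{((g_j)_M,\Theta_M(\mu_{g_j}))\}_j$; matching data, each $f_i=(g_{j})_M$ and $\lambda_{f_i}=\Theta_M(\mu_{g_j})$ for a suitable $j$, and since $\Theta_M$ preserves $|\cdot|$ this exhibits $\lambda_{f_i}\in\Theta_M(\Lambda(|\lambda_{f_i}|))$. Conversely, given $\mu_i\vdash|\lambda_{f_i}|$ with $\Theta_M(\mu_i)=\lambda_{f_i}$, let $h_i\in\Phi$ be the unique polynomial with $(h_i)_M=f_i$; the element $X$ with data $\{(h_i,\mu_i)\}_i$ lies in $\text{GL}(n,q)$ (degrees are unchanged and $h_i\neq x$), and by the formula above $\Delta_{X^M}=\{(f_i,\Theta_M(\mu_i))\}_i=\Delta_\alpha$, so $X^M$ is conjugate to $\alpha$. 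I expect the main obstacle to be the block computation of the previous paragraph: one must rerun the argument of Proposition~\ref{M-power-partition-split} with Lemma~\ref{jordan-block-power-modular} in place of Lemma~\ref{jordan-block-power}, and check carefully that the characteristic-$M$ splitting $(\gamma I+N)^M=\gamma^M I+N^M$, assembled over all parts of $\lambda_{f_i}$ and all Galois conjugates, descends from $\mathbb F_{q^d}$ to a conjugacy over $\mathbb F_q$ realising exactly the partition $\Theta_M(\lambda_{f_i})$.
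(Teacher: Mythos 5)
Your proposal is correct and follows essentially the same route as the paper: reduce to a single Jordan block, diagonalise the semisimple part over $\mathbb F_{q^{d}}$, use the characteristic-$M$ identity $(\gamma I+N)^M=\gamma^M I+N^M$ together with Lemma~\ref{jordan-block-power-modular} to read off the partition $\Theta_M(\lambda_{f_i})$, and use bijectivity of $x\mapsto x^M$ on $\mathbb F_{q^{d}}^*$ to handle the polynomial part. If anything, your write-up is more complete than the paper's, which defers the converse to Miller's corollaries and does not spell out why $f\mapsto f_M$ preserves degree for $d>1$ — a point you verify correctly via uniqueness of $M$-th roots in subfields.
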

	\begin{proof}
		Let $A\in \text{GL}(n, q)$ be a solution of $X^M=\alpha$. It suffices to prove the statement when $A$ has a single Jordan block. Thus we may assume, $A$ corresponds to the polynomial $g$ and partition $\mu_g=(\mu_1, \ldots, \mu_k)$. If $g(x)=(x - a_1)\cdots (x - a_d)$ then define $g^{(M)}(x)= (x-a_1^M)\cdots (x-a_d^M)$. Clearly, $g^{(M)}$ is defined over $\mathbb F_q$ if $g$ is so. Now, we claim that the associated combinatorial data to $A^M$ is $g^{(M)}$ and the partition $\Theta_M(\mu)$. Since, raising power $M$ is a bijection on $\mathbb F_q^*$, we can easily find $g$ such that $g^{(M)} = f$ (for example, by factorising it as a product of linear polynomials). Thus, this gives the required condition that $\lambda_f$ must be $\Theta_M(\mu)$.      
		
		For the converse, we have $\alpha$ with its combinatorial data satisfying $\lambda_{f_i}\in \Theta_M\left(\Lambda(|\lambda_{f_i}|)\right)$, for all $i$. Without loss of generality, we may assume it has a single Jordan block, say $\alpha$ is conjugate to $J_{f, k}$. Rest of the proof is similar to the~\cite[Corollary 3 and Corollary 4]{Mi}, thus we mention it briefly. By factorising $f$ over $\bar{\mathbb F_q}$, we can reduce it to constructing the solution $A$ for the Jordan matrix $J_{\beta, m}$ where $\beta$ is a root of $f$.  We take $A=J_{\gamma, m}$ and get $A^M= \gamma^M I+ J_{0,m}^M$ (since $q$ is an $M$ power). By Lemma~\ref{jordan-block-power-modular}, the combinatorial data $\Delta_{A^M}$ consists of polynomial $(x- \gamma^M)$ and the partition $\mu=\left( \underbrace{\ceil*{\frac{m}{M}},\ldots,\ceil*{\frac{m}{M}}}_{\bar m}, \underbrace{\floor*{\frac{m}{M}},\ldots,\floor*{\frac{m}{M}}}_{M-\bar m}\right)$. Thus, we choose $\gamma$ so that $\gamma^M=\beta$. Combined with the fact that $\mu \in \Theta_M(\Lambda(m))$, and putting together the Galois conjugate blocks, we get the proof.
	\end{proof} 
	\noindent  Since,  order of a semisimple element $\alpha$ is coprime to $M$, the equation $X^M=\alpha$ always has a solution in $\text{GL}(n,q)$. Further, 
	\begin{corollary}
		With notation as above, let $\alpha \in \text{GL}(n,q)$ be a regular element. Then, $X^M=\alpha$ has a solution in $\text{GL}(n,q)$ if and only if $\alpha$ is semisimple.
	\end{corollary}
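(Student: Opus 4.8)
The plan is to combine Theorem~\ref{M-power-modular-case} with the structure of a regular element. Recall that $\alpha$ being regular means that in its combinatorial data $\Delta_\alpha$ each partition $\lambda_{f_i}$ is a single part, i.e. $\lambda_{f_i}=(\lambda_i)$ with $\lambda_i=|\lambda_{f_i}|$. By Theorem~\ref{M-power-modular-case}, $X^M=\alpha$ is solvable if and only if each $\lambda_{f_i}\in\Theta_M(\Lambda(\lambda_i))$. So the whole question reduces to: for which single-part partitions $(\lambda_i)$ do we have $(\lambda_i)\in\Theta_M(\Lambda(\lambda_i))$?

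The key step is the observation that $\Theta_M$ is strictly \emph{contracting on the largest part} whenever that part exceeds $1$. Concretely, for any partition $\mu\vdash m$ with largest part $\mu_1\ge 2$, every part of $\Theta_M(\mu)$ is at most $\ceil{\mu_1/M}<\mu_1$ (using $M\ge 2$); indeed from the explicit description of $\Theta_M$ in terms of $(\mu_1,\ldots,\mu_k)$, each $\mu_j$ contributes parts of size $\ceil{\mu_j/M}$ or $\floor{\mu_j/M}$, all of which are $\le\ceil{\mu_1/M}$, and $\ceil{t/M}<t$ for every integer $t\ge 2$ when $M\ge 2$. Hence the single-part partition $(\lambda_i)$ with $\lambda_i\ge 2$ can never appear in the image $\Theta_M(\Lambda(\lambda_i))$, because any preimage $\mu\vdash\lambda_i$ would force $\Theta_M(\mu)$ to have all parts $<\lambda_i$, contradicting $\Theta_M(\mu)=(\lambda_i)$. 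On the other hand, if $\lambda_i=1$ then $\Lambda(1)=\{(1)\}$ and $\Theta_M((1))=(1)$, so $(1)\in\Theta_M(\Lambda(1))$ trivially.

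Putting these together: $X^M=\alpha$ has a solution if and only if $\lambda_i=1$ for every $i$, which is precisely the condition that every partition $\lambda_{f_i}$ in $\Delta_\alpha$ equals $(1)$ — and by the definitions in Section~\ref{Scycle-index} that is exactly the statement that $\alpha$ is (regular) semisimple. Conversely any semisimple $\alpha$ is an $M^{th}$ power for the trivial reason that its order is coprime to $M$ (the remark preceding the corollary), which also handles the ``if'' direction without even invoking the partition analysis. Thus the regular element $\alpha$ is an $M^{th}$ power precisely when it is semisimple.

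I do not expect a serious obstacle here; the only thing requiring a little care is the elementary inequality $\ceil{t/M}<t$ for $t\ge2,\ M\ge2$ and checking that it genuinely rules out a fixed point of $\Theta_M$ at a single-part partition — one must note that since \emph{all} parts of $\Theta_M(\mu)$ drop below $\mu_1\le\lambda_i$, there is no way to reassemble them into the one big part $(\lambda_i)$. If one prefers, the same conclusion can be read off from the image characterization Lemma: a single-part partition $(\lambda_i)$ has transpose $1^{\lambda_i}$, and for $\lambda_i\ge 2$ the condition $\sum_{j=1}^{M-1}m_{(iM-j)}(\mu')\le1$ fails (taking $i=1$ gives $m_1(\mu')+\cdots=\lambda_i\ge2$ among the relevant residues when $M\ge2$), so $(\lambda_i)\notin\Theta_M(\Lambda(\lambda_i))$ unless $\lambda_i=1$.
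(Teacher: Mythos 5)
Your proposal is correct and follows essentially the same route as the paper: reduce via Theorem~\ref{M-power-modular-case} to the question of whether a single-part partition $(\lambda_i)$ lies in $\Theta_M(\Lambda(\lambda_i))$, and observe that this forces $\lambda_i=1$. The paper leaves that last step as ``from the definition of $\Theta_M$''; your explicit bound $\ceil*{\mu_1/M}<\mu_1$ (and the cross-check via the image-characterization lemma) just fills in the detail.
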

	\begin{proof}
		Since $\alpha$ is regular,  the combinatorial data $\Delta_{\alpha}$ consists of $f_i$ and $\lambda_{f_i}$ with exactly one part $|\lambda_{f_i}|$. Then by Theorem~\ref{M-power-modular-case}, $X^M=\alpha$ has a solution if and only if, for each $i$, the partition $\lambda_{f} = (|\lambda_{f_i}|)$ is in $\Theta_M(|\lambda_{f_i}|)$. Now, from definition of $\Theta_M$, this is possible only if $|\lambda_{f_i}|=1$ for all $i$. This proves that $X^M=\alpha$ has a solution if and only if $\alpha$ is semisimple.
	\end{proof}
	\noindent We summarise this as follows:
	\begin{proposition}
		Let $M$ be a prime and $q$ be a power of $M$. Then,
		\begin{enumerate}
			\item the $M^{th}$ power semisimple classes in $\text{GL}(n,q)$ are $c(n,q,M)_{\s}= c(n,q)_{\s}$. The generating function for semisimple classes (respectively semisimple elements) which are $M^{th}$ power is same as that of all semisimple classes (respectively semisimple elements). 
			\item The $M^{th}$ power regular and regular semisimple classes in $\text{GL}(n,q)$ are $c(n,q,M)_{\rg} = c(n,q,M)_{\rs} = c(n,q)_{\rs}$. The generating function for regular and regular semisimple classes (respectively elements) which are $M^{th}$ power is same as that of all regular semisimple classes (respectively elements). 
		\end{enumerate}
	\end{proposition}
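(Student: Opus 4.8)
The plan is to derive both parts directly from Theorem~\ref{M-power-modular-case} and the corollary immediately preceding this proposition, so that the argument is little more than unwinding the definition of $\Theta_M$.

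First I would settle the semisimple case. If $\alpha\in\text{GL}(n,q)$ is semisimple, then every partition occurring in $\Delta_\alpha$ has the form $1^{m}$ with $m=|\lambda_{f_i}|$. Applying $\Theta_M$ to $1^{m}$, each part equals $1$, so $\bar 1 = 1$ and it contributes one copy of $\lceil 1/M\rceil=1$ together with $M-1$ copies of $\lfloor 1/M\rfloor=0$; hence $\Theta_M(1^{m})=1^{m}$, which shows $1^{m}\in\Theta_M(\Lambda(m))$. (Equivalently, as recalled just before the corollary, a semisimple element has order coprime to $M$, so $X=\alpha^{k}$ with $Mk\equiv 1$ modulo the order of $\alpha$ solves $X^M=\alpha$.) By Theorem~\ref{M-power-modular-case} every semisimple conjugacy class is an $M^{th}$ power and every semisimple element lies in the image of $\omega$; therefore $c(n,M)_{ss}=c(n)_{ss}$ and $\text{GL}(n,q)^M_{ss}=\text{GL}(n,q)_{ss}$, so the class and element generating functions coincide with those recalled in Section~\ref{Scycle-index}.

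Next I would handle the regular and regular semisimple cases. A regular semisimple element is in particular semisimple, so by the previous step every regular semisimple class is an $M^{th}$ power and every regular semisimple element lies in the image; thus $c(n,M)_{rs}=c(n)_{rs}$ and $\text{GL}(n,q)^M_{rs}=\text{GL}(n,q)_{rs}$. For regular classes, the corollary preceding this proposition says that a regular element is an $M^{th}$ power if and only if it is semisimple, i.e.\ if and only if it is regular semisimple; hence the regular classes that are $M^{th}$ powers are exactly the regular semisimple classes, giving $c(n,M)_{rg}=c(n)_{rs}$ and $\text{GL}(n,q)^M_{rg}=\text{GL}(n,q)_{rs}$ as sets. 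Consequently all four generating functions in question (for regular and regular semisimple classes, and for regular and regular semisimple elements, which are $M^{th}$ powers) reduce to those for regular semisimple classes and elements recorded in Section~\ref{Scycle-index}.

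No genuine obstacle arises; the only point needing a moment's care is the verification that $1^{m}\in\Theta_M(\Lambda(m))$ (equivalently, applying the image criterion of the lemma preceding Proposition~\ref{image-thetaM} to the transpose of $1^m$, namely the one-part partition $(m)$), and keeping straight that a regular $M^{th}$ power is forced to be semisimple, so the regular count collapses to the regular semisimple one rather than to the count of all regular classes.
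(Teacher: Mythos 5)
Your proposal is correct and follows essentially the same route as the paper, which presents this proposition as a summary of the preceding remark (a semisimple element has order coprime to $M$, equivalently $\Theta_M$ fixes $1^{m}$, so all semisimple classes and elements survive) and of the corollary that a regular element is an $M^{th}$ power precisely when it is semisimple, i.e.\ regular semisimple. Your added care in checking $1^{m}\in\Theta_M(\Lambda(m))$ and in noting that ``regular and semisimple'' means ``regular semisimple'' is exactly the right bookkeeping.
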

	
	The following result generalizes~\cite[Theorem 2]{Mi}.
	\begin{theorem}\label{Theorem-modular}
		Let $M$ be a prime and $q$ be a power of $M$. The generating function for $M^{th}$ power conjugacy classes in $\text{GL}(n,q)$ is,
		$$1+\sum_{n=1}^{\infty}c(n,q,M) u^n = \prod_{d\geq 1}\left(\prod_{k\geq 1} \frac{1 + u^{d(kM-1)} + \cdots + u^{d(kM-(M-1))}}{1-u^{dkM}}\right)^{N(q,d)}.$$
	\end{theorem}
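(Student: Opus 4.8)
The plan is to combine the structural description of $M^{th}$ powers from Theorem~\ref{M-power-modular-case} with the cycle index generating function~\eqref{cycle-index}, exactly as was done for the coprime cases. By Theorem~\ref{M-power-modular-case}, a conjugacy class $\alpha$ with combinatorial data $\Delta_\alpha = \{(f_i,\lambda_{f_i})\}$ is an $M^{th}$ power if and only if each partition $\lambda_{f_i}$ lies in the image $\Theta_M(\Lambda(|\lambda_{f_i}|))$. Crucially, since $q$ is a power of $M$, raising to the $M^{th}$ power is a bijection on $\mathbb{F}_q^*$, so $f \mapsto f^{(M)}$ permutes $\Phi$ and preserves degrees; hence the condition is genuinely independent polynomial-by-polynomial and imposes no constraint coming from which $f_i$ occur, only on the shapes of the partitions attached to them. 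Therefore in the cycle index generating function we set $x_{f,\lambda} = 1$ whenever $\lambda \in \Theta_M(\Lambda(|\lambda|))$ and $x_{f,\lambda} = 0$ otherwise, and the resulting generating function for $c(n,M)$ factors over $\Phi$, with polynomials of the same degree contributing identical factors, giving a product indexed by $d \geq 1$ with exponent $N(q,d)$.

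Next I would identify the single-polynomial factor. For a fixed $f$ of degree $d$, the relevant factor is
\[
1 + \sum_{j\geq 1}\ \sum_{\substack{\lambda \vdash j\\ \lambda \in \Theta_M(\Lambda(j))}} x_{f,\lambda}\, \frac{u^{jd}}{q^{d\sum_i (\lambda_i')^2}\prod_t \left(\tfrac{1}{q^d}\right)_{m_t(\lambda)}},
\]
but here is the key simplification: we are counting \emph{conjugacy classes}, not weighting by centralizer size. To count classes one works instead with the Macdonald-style bookkeeping, or more directly: the number of $M^{th}$-power classes of $\text{GL}(n,q)$ equals $\sum_{\Phi\to\Lambda}\prod_f [\lambda_f \in \Theta_M(\Lambda(|\lambda_f|))]$ subject to $\sum_f |\lambda_f|\deg(f) = n$. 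Grouping by degree, the generating function is $\prod_{d\geq 1}\left(\sum_{\lambda \in \operatorname{Im}\Theta_M} u^{d|\lambda|}\right)^{N(q,d)} = \prod_{d\geq 1}\left(1 + \sum_{j\geq 1} |\Theta_M(\Lambda(j))|\, u^{dj}\right)^{N(q,d)}$.

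The final step is to substitute the formula from Proposition~\ref{image-thetaM}: we have shown $1 + \sum_{j\geq 1} |\Theta_M(\Lambda(j))|\, u^j = \prod_{k\geq 1} \frac{1 + u^{kM-1} + \cdots + u^{kM-(M-1)}}{1 - u^{kM}}$. Replacing $u$ by $u^d$ in this identity yields $1 + \sum_{j\geq 1}|\Theta_M(\Lambda(j))|\, u^{dj} = \prod_{k\geq 1}\frac{1 + u^{d(kM-1)} + \cdots + u^{d(kM-(M-1))}}{1 - u^{dkM}}$, and raising to the power $N(q,d)$ and taking the product over $d$ gives precisely the claimed formula.

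\textbf{Main obstacle.} The one point that needs genuine care is justifying that the generating function really does factor as an Euler product over $\Phi$ \emph{and} that the $M^{th}$-power condition decouples across distinct irreducible factors. The decoupling is exactly what Theorem~\ref{M-power-modular-case} provides (the condition is $\lambda_{f_i} \in \Theta_M(\Lambda(|\lambda_{f_i}|))$ for \emph{each} $i$ separately), and the fact that $f\mapsto f^{(M)}$ is a degree-preserving bijection of $\Phi$ is what guarantees that no polynomial is excluded and that every degree-$d$ factor is identical; I would state both of these explicitly before writing down the product. Everything after that is a routine substitution of Proposition~\ref{image-thetaM} with $u \leadsto u^d$.
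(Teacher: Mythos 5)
Your proposal is correct and follows essentially the same route as the paper: apply Theorem~\ref{M-power-modular-case} to reduce the count to choosing, for each $f\in\Phi$ independently, a partition in the image of $\Theta_M$, factor the resulting generating function over $\Phi$ grouped by degree, and substitute Proposition~\ref{image-thetaM} with $u$ replaced by $u^{\deg(f)}$. The brief detour through the centralizer-weighted cycle index is unnecessary (as you yourself note, one counts classes directly), but the argument lands exactly where the paper's proof does.
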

	\begin{proof}
		By Theorem~\ref{M-power-modular-case} we have (the first equality below), 
		\begin{eqnarray*}
			&& 1+\sum_{n=1}^{\infty} c(n,q,M) u^n  = \sum_{\lambda_f \in \Theta_M(|\lambda_f|)} u^{\sum_{f\in \phi}|\lambda_{f}|. deg(f)}=  \prod_{f\in \Phi} \sum_{\lambda_f \in \Theta_M(|\lambda_f|)} u^{|\lambda_{f}|. deg(f)}\\
			&=& \prod_{f\in \Phi} \prod_{k\geq 1} \frac{1+u^{deg(f).(kM-1)} + ux^{deg(f).(kM-2)} + \cdots + u^{deg(f).(kM-(M-1))}}{1-u^{deg(f).kM}} \\
			&=& \prod_{d\geq 1}\left(\prod_{k\geq 1} \frac{1+ u^{d(kM-1)} + \cdots + u^{d(kM-(M-1))}}{1-u^{dkM}}\right)^{N(q,d)}.
		\end{eqnarray*}
		The third equality follows from Proposition~\ref{image-thetaM} by taking $u$ as $u^{deg(f)}$.
	\end{proof}
	\noindent We note that for $M=2$, we get~\cite[Theorem 2]{Mi} by substituting $q=2$ in the following.
	\begin{corollary}
		For $M=2$ we have,
		$$1+\sum_{n=1}^{\infty} c(n,q,2) u^n = \prod_{n\geq 1} \frac{(1-u^{2n})(1-qu^{2n})}{(1+u^{2n-1})(1-qu^{n})(1-qu^{4n})}.$$
	\end{corollary}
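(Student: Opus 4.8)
The plan is to specialize the general formula of Theorem~\ref{Theorem-modular} to $M=2$ and then massage the resulting infinite product into the stated form. First I would set $M=2$ in the theorem, so that the inner factor $\frac{1+u^{d(kM-1)}+\cdots+u^{d(kM-(M-1))}}{1-u^{dkM}}$ collapses to $\frac{1+u^{d(2k-1)}}{1-u^{2dk}}$, giving
\begin{equation*}
1+\sum_{n=1}^{\infty}c(n,2)u^n=\prod_{d\geq 1}\left(\prod_{k\geq 1}\frac{1+u^{d(2k-1)}}{1-u^{2dk}}\right)^{N(q,d)}.
\end{equation*}

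Next I would simplify the inner product over $k$. The numerator $\prod_{k\geq 1}(1+u^{d(2k-1)})$ is a product over all odd multiples of $d$, and the denominator $\prod_{k\geq 1}(1-u^{2dk})$ is over all even multiples of $d$. Using the standard identity $\prod_{k\geq 1}(1+v^{2k-1})=\prod_{k\geq 1}\frac{1-v^{2k}}{1-v^{k}}$ (Euler) with $v=u^d$, together with $\prod_{k\geq 1}(1-u^{2dk})$, one rewrites the bracketed term purely in terms of the functions $\prod(1-u^{dk})$, $\prod(1-u^{2dk})$, $\prod(1-u^{4dk})$. Concretely, $\prod_k(1+u^{d(2k-1)})=\dfrac{\prod_k(1-u^{2dk})}{\prod_k(1-u^{dk})}$, so the whole bracket becomes $\dfrac{\left(\prod_k(1-u^{2dk})\right)^2}{\prod_k(1-u^{dk})\prod_k(1-u^{2dk})}$, which still needs to be reconciled with the target; the cleaner route is to absorb one more Euler-type identity to introduce the $(1+u^{2n-1})$ and $(1-qu^{4n})$ factors after the $N(q,d)$-exponentiation.

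Then I would raise to the power $N(q,d)$ and take the product over $d$, converting each factor $\prod_d\left(\prod_k(1-u^{mdk})\right)^{\pm N(q,d)}$ into a single infinite product indexed by one integer. Here the key tool is the generating-function identity for $N(q,d)$ recalled in Section~\ref{Scycle-index}: $\prod_{d\geq 1}(1-w^d)^{-N(q,d)}=\frac{1-w}{1-qw}$. Applying this with $w=u^{2n}$, $w=u^{n}$, and $w=u^{4n}$ (after reindexing the double products $\prod_d\prod_k$ as single products over $n=dk$, which introduces no extra combinatorial factor beyond what $N(q,d)$ already encodes since $\sum_{d\mid n}N(q,d)=q^n-q^{n-1}$ is automatically handled by the identity) should produce the three factors $\frac{1-u^{2n}}{1-qu^{2n}}$ (from the numerator's $(1-u^{2dk})^{N(q,d)}$-type contribution), $(1-qu^n)$ and $(1-qu^{4n})$ in the denominator, and $(1+u^{2n-1})$ in the denominator. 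Collecting everything yields
\begin{equation*}
1+\sum_{n=1}^{\infty}c(n,2)u^n=\prod_{n\geq 1}\frac{(1-u^{2n})(1-qu^{2n})}{(1+u^{2n-1})(1-qu^{n})(1-qu^{4n})}.
\end{equation*}

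The main obstacle I anticipate is purely bookkeeping: correctly tracking which Euler product identity produces which of the four factors, and making sure the reindexing $\prod_{d}\prod_{k}\mapsto\prod_n$ interacts correctly with the exponents $N(q,d)$ (i.e.\ that every application of $\prod_d(1-w^d)^{-N(q,d)}=\frac{1-w}{1-qw}$ is applied to a genuinely $d$-indexed product with the right substitution for $w$). There is no conceptual difficulty beyond Theorem~\ref{Theorem-modular} and the $N(q,d)$ generating function; it is a matter of algebraic manipulation of formal power series, which can be verified by comparing coefficients if the symbolic route becomes unwieldy. Setting $q=2$ at the end recovers~\cite[Theorem 2]{Mi} as a consistency check.
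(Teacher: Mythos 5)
Your overall strategy (specialize Theorem~\ref{Theorem-modular} to $M=2$, simplify the inner product over $k$ by an Euler-type identity, then apply $\prod_{d\geq 1}(1-w^d)^{-N(q,d)}=\tfrac{1-w}{1-qw}$ factor by factor) is exactly the paper's route, and the last step is described correctly: for each fixed $k$ one applies the $N(q,d)$ identity with $w=u^{k}$, $w=u^{2k}$ and $w=u^{4k}$. However, there is a genuine gap at the central simplification. The identity you invoke, $\prod_k(1+v^{2k-1})=\prod_k\frac{1-v^{2k}}{1-v^{k}}$, is false: the right-hand side equals $\prod_k(1+v^{k})$, not $\prod_k(1+v^{2k-1})$. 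The correct statement is
\[
\prod_{k\geq 1}(1+v^{2k-1})=\frac{\prod_{k\geq 1}(1-v^{2k})^2}{\prod_{k\geq 1}(1-v^{k})\prod_{k\geq 1}(1-v^{4k})},
\qquad\text{equivalently}\qquad
\prod_{k\geq 1}\frac{1+v^{2k-1}}{1-v^{2k}}=\prod_{k\geq 1}\frac{1-v^{2k}}{(1-v^{k})(1-v^{4k})},
\]
obtained by multiplying numerator and denominator by $\prod_k(1-v^{2k-1})$. Your intermediate bracket $\frac{\bigl(\prod_k(1-u^{2dk})\bigr)^2}{\prod_k(1-u^{dk})\prod_k(1-u^{2dk})}$ is missing the factor $\prod_k(1-u^{4dk})$ in the denominator, which is precisely where the $(1-qu^{4n})$ in the final answer comes from; you notice the mismatch but defer the fix to an unspecified ``one more Euler-type identity,'' so the proof is incomplete at the one step that carries the computation. (The parenthetical claim $\sum_{d\mid n}N(q,d)=q^n-q^{n-1}$ is also false for $q>2$, but it is not load-bearing.)

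Once the identity is corrected, everything closes: the bracket becomes $\prod_k\frac{1-u^{2dk}}{(1-u^{dk})(1-u^{4dk})}$, raising to $N(q,d)$ and applying the generating-function identity over $d$ for each fixed $k$ gives $\prod_k\frac{(1-qu^{2k})(1-u^{k})(1-u^{4k})}{(1-u^{2k})(1-qu^{k})(1-qu^{4k})}$, and one final use of the same Euler identity, $\prod_k\frac{(1-u^{k})(1-u^{4k})}{1-u^{2k}}=\prod_k\frac{1-u^{2k}}{1+u^{2k-1}}$, produces the stated product. So your plan is sound and matches the paper's proof, but as written the key manipulation would fail; state and use the correct odd-index Euler identity and the argument goes through.
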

	\begin{proof}
		From previous Theorem we have, 
		$$1+\sum_{n=1}^{\infty} c(n,2) u^n = \prod_{d\geq 1}\prod_{k\geq 1} \left(\frac{1+u^{d(2k-1)}}{1-u^{2dk}}\right)^{N(q,d)}.$$
		Since $\displaystyle \prod_{k\geq 1} \frac{1+u^{2k-1}}{1-u^{2k}} = \prod_{k\geq 1}\frac{1-u^{2k}}{(1-u^k)(1-u^{4k})}$ (obtained by multiplying with $(1-u^{2k-1})$ in the numerator and denominator) we get
		$$1 + \sum_{n=1}^{\infty} c(n,2) u^n = \prod_{d\geq 1}\prod_{k\geq 1} \left(\frac{1-u^{2dk}}{(1-u^{dk})(1-u^{4dk})}\right)^{N(q,d)}.$$
		Now, we use $\prod_{d\geq 1}(1- u^d)^{-N(q,d)}=\frac{1-y}{1-qy}$ to get the required result.
	\end{proof}
	We can obtain the formula for general elements as follows:
	\begin{proposition}
		With the notation as above, 
		{\small $$\displaystyle 1+\sum_{n=1}^{\infty}\frac{|\text{GL}(n,q)^M|}{|\text{GL}(n,q)|} u^n = \prod_{f\in \Phi} \left( 1+ \sum_{n\geq 1} \sum_{\substack{\lambda \vdash n\\ \lambda \in \Theta_M(\Lambda(n))}} \frac{u^{n.deg(f)}}{ q^{deg(f).\sum_{i} (\lambda^{'}_{i})^2} \smashoperator[r]{\prod_{i\geq 1}} \left(\frac{1}{q^{deg(f)}}\right)_{m_i(\lambda_f)}} \right).$$}
	\end{proposition}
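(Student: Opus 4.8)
The plan is to obtain this from the cycle index generating function \eqref{cycle-index} by a suitable specialisation of the formal variables $x_{f,\lambda}$, exactly as in the proofs of Theorem~\ref{Theorem-rs-rgAE} and Theorem~\ref{gen-fun-primepower}; the combinatorial input is Theorem~\ref{M-power-modular-case}. First I would recall that, since $\text{GL}(n,q)^M$ is a union of conjugacy classes, the quantity $\frac{|\text{GL}(n,q)^M|}{|\text{GL}(n,q)|}$ equals the sum of $\frac{1}{|Z_{\text{GL}(n,q)}(\alpha)|}$ over a set of representatives $\alpha$ of the conjugacy classes lying in the image of the power map. By Theorem~\ref{M-power-modular-case}, a class with combinatorial data $\{(f_i,\lambda_{f_i})\}$ lies in that image if and only if $\lambda_{f_i}\in\Theta_M(\Lambda(|\lambda_{f_i}|))$ for every $i$ --- a condition imposed on each pair $(f_i,\lambda_{f_i})$ separately.

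Next I would specialise in the defining expression $Z_{\text{GL}(n,q)}=\frac{1}{|\text{GL}(n,q)|}\sum_{\alpha}\prod_{f\colon|\lambda_f(\alpha)|>0}x_{f,\lambda_f(\alpha)}$ by setting $x_{f,\lambda}=1$ when $\lambda\in\Theta_M(\Lambda(|\lambda|))$ and $x_{f,\lambda}=0$ otherwise (the choice being the same for all $f$). Under this specialisation the term attached to $\alpha$ equals $\frac{1}{|\text{GL}(n,q)|}$ exactly when every $\lambda_f(\alpha)$ lies in the image of $\Theta_M$, i.e. exactly when $\alpha\in\text{GL}(n,q)^M$ by the criterion above, and vanishes otherwise; hence the specialised series $1+\sum_{n\geq1}Z_{\text{GL}(n,q)}u^n$ is the left-hand side of the asserted identity. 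Substituting the same specialisation into the Euler-product side \eqref{cycle-index}, the inner double sum $\sum_{j\geq1}\sum_{\lambda\vdash j}x_{f,\lambda}(\cdots)$ reduces to $\sum_{j\geq1}\sum_{\lambda\vdash j,\ \lambda\in\Theta_M(\Lambda(j))}(\cdots)$, which is precisely the factor displayed in the statement (with $n$ there in place of $j$, and $x_{f,\lambda}$ read as $1$ on the surviving terms). Equating the two expressions for the cycle index generating function then gives the claim.

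I do not expect a genuine obstacle here: the substantive work is already packaged in Theorem~\ref{M-power-modular-case}, and what remains is a formal manipulation of the power series \eqref{cycle-index}. The one point deserving explicit mention --- and the reason the product structure survives the specialisation --- is that the $M^{th}$-power criterion is \emph{local} in $f$: it is a membership condition on each partition $\lambda_{f_i}$ individually, with no coupling between distinct irreducible polynomials, so the specialisation of $x_{f,\lambda}$ can be carried out factor by factor.
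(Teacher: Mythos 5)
Your proposal is correct and matches the paper's argument exactly: the paper derives this proposition by combining Theorem~\ref{M-power-modular-case} with the cycle index identity~\eqref{cycle-index}, restricting the inner sum to partitions in the image of $\Theta_M$ (equivalently, setting $x_{f,\lambda}=0$ off that image), which is precisely your specialisation. Your added remark that the criterion is local in $f$, so the Euler-product structure survives, is the right observation and is implicit in the paper's one-line proof.
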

	\begin{proof}
		This follows from Theorem~\ref{M-power-modular-case} and the cycle index generating function Equation~\ref{cycle-index}.
	\end{proof}
	
	\bibliographystyle{amsalpha}
	\bibliography{references}

\end{document}